\newtheorem{thm}{Theorem}[section]
\newtheorem{proposition}[thm]{Proposition}
\newtheorem{corollary}[thm]{Corollary}
\newtheorem{lemma}[thm]{Lemma}
\newtheorem{definition}[thm]{Definition}
\newtheorem{remark}[thm]{Remark}
\title[On rings  of supersymmetric polynomials]{On rings of supersymmetric polynomials}
\author{ A.N. Sergeev}\address{Department of Mathematics, Saratov State University, Astrakhanskaya 83, Saratov 410012, Russia and National Research University Higher School of Economics, Russian Federation.}
\email{SergeevAN@info.sgu.ru}
\begin{document}
\keywords{Supersymmetric polynomials, Generators and relations, Lie superalgebras, Euler  supercharacters}

\maketitle

\begin{abstract} We consider three types of rings of supersymmetric polynomials: polynomial ones $\Lambda_{m,n}$, partially polynomial $\Lambda_{m,n}^{+y}$ and Laurent supersymmetric rings $\Lambda_{m,n}^{\pm}$.  For  each type of rings we give their descriptions in terms of generators and relations.   As a corollary we get  for $n\ge m$ an isomorphism  $\Lambda_{m,n}^{+y}=\Lambda_{m,m}^{+y}\otimes\Lambda^{+y}_{0,n-m}$.  It is also true for polynomial  rings, but in this case the isomorphism does not  preserve the grading. For each type of rings  some natural basis consisting of Euler supercharacters is constructed.

 \end{abstract}

\tableofcontents

\section{Introduction}  

 Rings of symmetric polynomials $\Lambda_m$ play  an important role in many areas of mathematics (see \cite{Ma}). From the representation theory point of view ring   $\Lambda_m =\Bbb Z[x_1,\dots,x_m]^{S_m}$
 is the Grothendieck ring of polynomial  finite dimensional  representations  of the algebraic group $GL(m)$.  The ring $\Lambda_m$ has a natural linear  basis consisting of the classes of irreducible polynomial representations. Corresponding symmetric polynomials  are known as Schur polynomials. They can be expressed by the Weyl formula or  the Jacobi-Trudy formula.  It is also well known that the ring   $\Lambda_m$  is freely generated by elementary symmetric polynomials $e_1,\dots, e_m$ as well as   complete symmetric polynomials $h_1,\dots, h_m$ and  both families are algebraically independent. So the problem of describing this ring in terms of generators and relations is trivial in this case. 

 If we consider all finite dimensional representation of the algebraic group $GL(m)$ then the corresponding ring is the ring of symmetric Laurent polynomials  (see for example \cite{FH})
  $
\Lambda^{\pm}_{m}=\Bbb C[x_1^{\pm1},\dots,x_m^{\pm1}]^{S_m}
.$
 The  ring $\Lambda^{\pm}_{m}$ has not been studied  in such details  as the polynomial one. It has of course a natural liner basis consisting of characters of irreducible finite dimensional representations which can be given again by the Weyl formula.  There is also some analogue of  the Jacobi-Trudy formula, which is called composite Schur functions (\cite{King,Mo}). It is also well known that  $\Lambda^{\pm}_{m}=\Lambda_{m}[e_m^{-1}]$. And a natural question is: How to describe this ring in terms of complete symmetric functions? It turns out, that an answer to this question can be naturally given in terms of Euler characters $H_k(x_1,\dots,x_n),\,k\in \Bbb Z$
$$
H_k=E(k\varepsilon_1)=\frac{\{x_1^{k+m-1}x_2^{m-2}\dots x_m^0\}}{\{x_1^{m-1}x_2^{m-2}\dots x_m^0\}}
$$
where $\{\}$ means the alternation on symmetric group $S_m$. It is easy to check, that 
$H_{k}= h_{k}-h^{\infty}_k,\,k\in \Bbb Z$ where $h_k, h_k^{\infty}$ are coefficients  of Laurent series at zero and infinity of  the function $\frac{1}{\prod(1-x_it)}$. It looks like these polynomials play  the same role for Laurent symmetric polynomials  as polynomials $h_{k}$  for usual symmetric polynomials. 
For example Jacobi-Trudy identity in this case can be written in the usual form
$$
E_{\lambda}(x_1,\dots, x_m)=\det (H_{\lambda_i-i+j})_{1\le i,j\le m}
$$
Polynomials  $H_k$  also allow to  give a description of the algebra $\Lambda_m^{\pm}$ in terms of generators $h_1,h_1^*,\dots$ and relations between them.

A remarkable feature of this approach  is that it works in the same manner for rings of supersymmetric polynomials of all types as well.   And it turns out that  in supersymmetric case we need to consider function $\frac{\prod(1-y_jt)}{\prod(1-x_it)}$ and define $H_{k}= h_{k}-h^{\infty}_k,\,k\in \Bbb Z$ by the same formula as before. Then  relations for rings of supersymmetric polynomials (of all types) can be obtained by the same formulae as in $\Lambda^{\pm}_{m}$ case simply replacing number $m$ by the super-dimension $m-n$. We should say that the importance of considering Laurent decomposition at infinity was first observed in the paper \cite{KV}, and some of the relations for algebras supersymmetric polynomials $\Lambda_{m,n}$ were introduced in the paper \cite{KT}. 
 
 As it  was proved in \cite{SV}  the  ring $\Lambda_{m,n}^{\pm}$ is  the quotient of the Grothendieck ring of the category  finite dimensional representations of Lie supergroup $GL(m,n)$  by the relation $[M]=-[\Pi M]$ and it has a natural basis consisting of the classes irreducible finite dimensional representations. But there is no simple explicit formula for them \cite{Serga}, \cite{Brun}. So  instead of characters of irreducible modules we use Euler supercharacters following explicit formula by Serganova \cite{GS} (We should note that Euler supercharacters can be easily obtained from  the corresponding Euler characters). We also prove Jacobi-Trudy identity for Euler characters.
One of the main technical tool in this paper is the evaluation homomorphism  $\varphi:\Lambda^{\pm}_{m,n}\rightarrow \Lambda^{\pm}_{m-1,n-1}$ such that $\varphi(x_m)=\varphi(y_n)$. It has been shown recently \cite{HR}, that  this evaluation homomorphism  can be naturally induced by Duflo-Serganova functor. It would be interesting to give similar interpretation to other results of the present paper, for example Corollary \ref{prod2} and Corollary \ref{prod1} which state that  for $n\ge m$ we have an isomorphisms $\Lambda_{m,n}^{+y}=\Lambda_{m,m}^{+y}\otimes\Lambda^{+y}_{0,n-m}$ and $\Lambda_{m,n}=\Lambda_{m,m}\otimes\Lambda_{0,n-m}$.

\section{Laurent symmetric polynomials}

In this section we are going to generalise some of the facts about symmetric polynomials to the case of Laurent symmetric polynomials. The main result of this section is a description of the rings of Laurent polynomials in terms of generators and relations. This problem is trivial for rings of usual symmetric polynomials, but it is not trivial for the rings of Laurent symmetric polynomials. 

\begin{definition} Let us define 
$$
\Lambda^{\pm}_m=\Bbb Z[x_{1}^{\pm1},\dots,x_{m}^{\pm1}]^{S_{m}}
$$
 and we shall call this ring as the  ring of Laurent symmetric polynomials.
\end{definition}
\begin{definition} Let  $\lambda_1,\dots,\lambda_m$ be any sequence of integers. Let us define Euler character  $E_{\lambda}\in\Lambda^{\pm}$  by the following formula 
$$
E_{\lambda}(x)\Delta_m(x)=\{E_{\lambda}(x)x_1^{m-1}\dots,x_m^0\}=\{x_1^{\lambda_1+m-1}\dots x_m^{\lambda_m}\}
$$
where  $\Delta_m(x)=\prod_{i<j}(x_i-x_j)ra$ and $\{f(x)\}$ means alternation over group  $S_m$,
$$
\{f(x)\}=\sum_{\sigma\in S_m} \varepsilon(\sigma)\sigma(f),\,
$$
\end{definition}
\begin{thm}\label{basis}  Let $\lambda_{1}\ge\dots\ge\lambda_{m}$ be integers, then $E_{\lambda}(x)$ form a basis of the ring  $\Lambda^{\pm}_{m}$.
\end{thm}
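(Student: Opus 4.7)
The plan is to reduce the statement to the classical fact that alternating monomials form a basis of the antisymmetric Laurent polynomials, and then to transfer this basis to $\Lambda^{\pm}_m$ via multiplication by the Vandermonde $\Delta_m(x)$.

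First I would introduce the $\mathbb{Z}$-module $A^{\pm}_m$ of antisymmetric Laurent polynomials, i.e.\ those $f \in \mathbb{Z}[x_1^{\pm 1},\dots,x_m^{\pm 1}]$ with $\sigma(f)=\varepsilon(\sigma)f$ for all $\sigma\in S_m$. The monomials $x^{\mu}=x_1^{\mu_1}\cdots x_m^{\mu_m}$ with $\mu\in\mathbb{Z}^m$ form a $\mathbb{Z}$-basis of the Laurent ring, and the alternation operator $\{\cdot\}$ kills any $x^{\mu}$ with repeated exponents while sending monomials with pairwise distinct exponents to $\pm$ their orbit sum. Grouping by the unique strictly decreasing representative, the family $\{x^{\mu}\}$ with $\mu_1>\mu_2>\dots>\mu_m$ is a $\mathbb{Z}$-basis of $A^{\pm}_m$.

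Next I would show that multiplication by $\Delta_m(x)$ induces an isomorphism $\Lambda^{\pm}_m \xrightarrow{\sim} A^{\pm}_m$. Injectivity is clear since $\Delta_m(x)$ is a nonzerodivisor, and $f \mapsto \Delta_m f$ visibly lands in $A^{\pm}_m$. For surjectivity, given $g\in A^{\pm}_m$, choose $N$ large enough that $(x_1\cdots x_m)^N g$ is an ordinary antisymmetric polynomial; by the classical argument (it vanishes whenever $x_i=x_j$, so each factor $x_i-x_j$ divides it, and these factors are coprime in $\mathbb{Z}[x_1,\dots,x_m]$), it is divisible by $\Delta_m$ with a symmetric quotient, so $g=\Delta_m\cdot h$ for some $h\in\Lambda^{\pm}_m$.

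Finally I would translate between the two indexings. The substitution $\mu_i = \lambda_i + m - i$ is a bijection between strictly decreasing tuples $\mu_1>\dots>\mu_m$ in $\mathbb{Z}^m$ and weakly decreasing tuples $\lambda_1\ge\dots\ge\lambda_m$ in $\mathbb{Z}^m$, and by the defining identity one has $E_{\lambda}(x)\Delta_m(x)=\{x^{\mu}\}$. Combining the previous two steps, the image of the basis $\{x^{\mu}\}$ of $A^{\pm}_m$ under the inverse of multiplication by $\Delta_m$ is exactly $\{E_{\lambda}(x):\lambda_1\ge\dots\ge\lambda_m\}$, which is therefore a $\mathbb{Z}$-basis of $\Lambda^{\pm}_m$.

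The only nontrivial step is the surjectivity of multiplication by $\Delta_m$ in the Laurent setting, but this is handled cleanly by clearing denominators with a power of $x_1\cdots x_m$ and invoking the classical polynomial result; there is no real obstacle.
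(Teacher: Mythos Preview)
Your proposal is correct and follows essentially the same route as the paper: the paper's two-sentence proof simply asserts that the alternated monomials $\{x_1^{\lambda_1+m-1}\cdots x_m^{\lambda_m}\}$ form a basis of the skew-symmetric Laurent polynomials and that multiplication by $\Delta_m$ is an isomorphism from symmetric to skew-symmetric, and you have spelled out exactly these two points (with the clearing-of-denominators trick making the surjectivity argument precise over $\mathbb{Z}$).
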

\begin{proof} If $\lambda_{1}\ge\dots\ge\lambda_{m}$  then polynomials $\{x_1^{\lambda_1+m-1}\dots x_m^{\lambda_m}\}$ form a basis in the space of Laurent skew-symmetric polynomials. Multiplication by $\Delta_m$  gives an isomorphism between vector space of symmetric polynomials and vector space of skew-symmetric polynomials.
\end{proof}
\begin{definition}
For any integer  $k$ let us set  
$
H_k=E_{(k,0,\dots,0)}.
$

\end{definition}

Let us  also define polynomials $h_{k},\, h_{k}^{(\infty)}$ by equalities
$$
\frac{1}{\prod_{i=1}^m(1-x_{i}t)}=\sum_{k=0}^{\infty}h_{k}t^{k}\,\,
=\sum_{k=-\infty}^{-m}h_{k}^{(\infty)}t^{k}
$$
We as usually suppose that  $h^{*}_{k}=h_{k}=0$ if $k<0$, where $h^{*}_{k}=h_{k}(x_{1}^{-1},\dots,x_{m}^{-1})$.

It is easy to check that
$$
h_{k}^{(\infty)}=(-1)^m(x_{1}\dots x_{m})^{-1}h^{*}_{-m-k},
$$

\begin{lemma}\label{induc} We have the following equalities
     
$1)$    
$$
H_{k}=\begin{cases} h_{k},\,\, k\ge0\\
0,\,\, -m<k<0\\
-h^{(\infty)}_k\,\, k\le -m
\end{cases},\quad\quad\quad\text{or}\quad H_k=h_k-h_k^{(\infty)}
$$

$2)$     $H_{k}-x_{1}H_{k-1}=H_{k}(x_{2},\dots,x_{m})$
  
  $3)$ If $m=1$, then $H_{k}(x_1)-x_{1}H_{k-1}(x_1)=0$

\end{lemma}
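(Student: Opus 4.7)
The plan is to route everything through the partial fraction decomposition
$$
\frac{1}{\prod_{i=1}^m(1-x_it)}=\sum_{i=1}^m\frac{A_i}{1-x_it},\qquad A_i:=\frac{x_i^{m-1}}{\prod_{j\ne i}(x_i-x_j)},
$$
and to link it to the alternant defining $H_k$.

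For part 1), I would first expand the determinant
$$
H_n\Delta_m(x)=\{x_1^{n+m-1}x_2^{m-2}\cdots x_m^0\}
$$
along its first column. The cofactors are Vandermonde alternants $\Delta_{m-1}(x_1,\dots,\hat x_i,\dots,x_m)$, and combining with the factorization $\Delta_m=(-1)^{i-1}\prod_{j\ne i}(x_i-x_j)\cdot\Delta_{m-1}(x_1,\dots,\hat x_i,\dots,x_m)$ yields the uniform closed form $H_n=\sum_{i=1}^mA_ix_i^{\,n}$ valid for all $n\in\Bbb Z$. On the other hand, expanding each $A_i/(1-x_it)$ separately at $t=0$ and at $t=\infty$ shows that $\sum_iA_ix_i^n=h_n$ when $n\ge 0$ and $\sum_iA_ix_i^n=-h_n^{(\infty)}$ when $n\le-1$, the latter being automatically zero for $-m<n\le-1$ because the infinity expansion of $1/\prod(1-x_it)$ carries no monomials $t^n$ in that range. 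Matching these against the closed form of $H_n$ gives at once the unified identity $H_n=h_n-h_n^{(\infty)}$ together with the three-case description (the middle case uses $h_n=0$ for $n<0$).

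For part 2), I would multiply the bilateral series $\sum_nH_nt^n=\sum_{n\ge 0}h_nt^n-\sum_{n\le-m}h_n^{(\infty)}t^n$ by $1-x_1t$ and apply the elementary identity
$$
\frac{1-x_1t}{\prod_{i=1}^m(1-x_it)}=\frac{1}{\prod_{i=2}^m(1-x_it)}
$$
in both Laurent expansions. Each of the two summands is thereby transformed into the corresponding series in the variables $x_2,\dots,x_m$, so that the coefficient of $t^k$ on the left, namely $H_k-x_1H_{k-1}$, becomes $h_k(x_2,\dots,x_m)-h_k^{(\infty)}(x_2,\dots,x_m)=H_k(x_2,\dots,x_m)$. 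Part 3) is then immediate from the definition: for $m=1$ we have $\Delta_1=1$ and $H_k(x_1)=\{x_1^k\}=x_1^k$, giving $H_k(x_1)-x_1H_{k-1}(x_1)=0$.

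The main obstacle is keeping the bilateral series manipulation honest: $\sum_nH_nt^n$ is not a convergent Laurent series but the formal juxtaposition of two separate expansions of the same rational function, and the $t=0$ and $t=\infty$ halves have to be tracked independently. In particular, multiplication by $(1-x_1t)$ shifts the leading nonzero infinity-coefficient from degree $-m$ to degree $-(m-1)$, which is exactly the range where $h_n^{(\infty)}$ begins to be nonzero in $m-1$ variables; this boundary consistency should be verified explicitly. Once the closed form $H_n=\sum_iA_ix_i^n$ and the partial-fraction expansion are in place, the remaining calculations are routine.
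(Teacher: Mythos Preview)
Your proposal is correct and follows essentially the same route as the paper: both hinge on the partial-fraction decomposition $\frac{1}{\prod(1-x_it)}=\sum_i\frac{A_i}{1-x_it}$ with $A_i=x_i^{m-1}/\prod_{j\ne i}(x_i-x_j)$ and on reading the alternant $\{x_1^{n+m-1}x_2^{m-2}\cdots x_m^0\}$ via Vandermonde cofactors; the paper packages this as two generating functions $H^{\pm}(t)$, while you extract the single closed form $H_n=\sum_iA_ix_i^{\,n}$ first, but the content is identical, and parts 2) and 3) match the paper's argument exactly.
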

\begin{proof} To prove the first equality let us calculate the following generating function 
$$
H_m^+(t)=\sum_{k=0}^{\infty}H_kt^k
$$
We have
$$
H_m^+(t)\Delta_m(x)=\sum_{k=0}^{\infty}\left\{x_1^kt^kx_1^{m-1},\dots,x_m^0\right\}=\sum_{k=0}^{\infty}\left\{\frac{1}{1-x_1t}x_1^{m-1},\dots,x_m^0\right\}
$$
$$
=\frac{x_1^{m-1}\Delta^{(1)}(x)}{1-x_1t}-\frac{x_2^{m-1}\Delta^{(2)}(x)}{1-x_2t}-\dots-\frac{x_m^{m-1}\Delta^{(m)}(x)}{1-x_mt}
$$
where
$$
\Delta^{(l)}(x)=\prod_{i,j\ne l, i<j}(x_i-x_j)
$$
 On the other hand we have the decomposition in  partial fractions 
 $$
\frac{1}{\prod_{i=1}^m(1-x_it)}=\frac{x_1^{m-1}}{\prod_{j\ne1}(x_1-x_j)}\frac{1}{1-x_1t}
$$
$$
+\frac{x_2^{m-1}}{\prod_{j\ne2}(x_2-x_j)}\frac{1}{1-x_2t}+\dots+\frac{x_m^{m-1}}{\prod_{j\ne m}(x_m-x_j)}\frac{1}{1-x_mt}
$$
Therefore
$$
H_m^+(t)=\frac{1}{\prod_{i=1}^m(1-x_it)}
$$
Now let us calculate generating function
$$
H_m^-(t)=\sum_{k=-1}^{-\infty}H_kt^k
$$
We have
$$
H_m^-(t)\Delta(x)=\sum_{k=1}^{\infty}\{x_1^{-k}t^{-k}x_1^{m-1},\dots,x_m^0\}=\sum_{k=0}^{\infty}\{\frac{x_1^{-1}t^{-1}}{1-x^{-1}_1t^{-1}}x_1^{m-1},\dots,x_m^0\}
$$
So $H_m^{-}(t)=-H_m^+(t)$.  This proves the first statement.

 Now let us prove the statement  $2)$. It is enough to prove it separately  for $h_{k}$ and $h_{k}^{(\infty)}$. 
 We have 
$
(1-x_{1}t)H^+_m(t)=H^+_{m-1}(t)
$
So the statement $2)$ is true for $h_{k}$. The case  $h_{k}^{(\infty)}$ can be proved in the same way.
 
 It it easy to check that  if $m=1$, then $H_k(x)=x^k,k\in \Bbb Z$ and the statement  $3)$ follows.
\end{proof}

\begin{thm} \label{alt} $1)$  Let $\lambda_1,\dots,\lambda_m$ be any sequence of integers, then

$$
E_{\lambda}(x_1,\dots, x_m)=\det (H_{\lambda_i-i+j})_{1\le i,j\le m}
$$

$2)$ If  $\lambda_1, \dots, \lambda_{m+1}$  any sequence of integers, then we have the following  equality in the ring $\Lambda_m^{\pm}$ 
$$
\det (H_{\lambda_i-i+j})_{1\le i,j\le m+1}=0
$$
\end{thm}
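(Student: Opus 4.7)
The strategy is to obtain a uniform closed form for $H_k(x_1,\ldots,x_m)$ valid for all integers $k$, and then exhibit the matrix $(H_{\lambda_i-i+j})$ as a product of a matrix of pure powers and a Vandermonde-type matrix, so that both determinantal claims reduce to standard linear algebra.

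The first step sharpens the partial fraction computation carried out in the proof of Lemma~\ref{induc}. Writing
$$
H_m^+(t)=\frac{1}{\prod_{i=1}^m(1-x_it)}=\sum_{l=1}^m\frac{A_l}{1-x_lt},\qquad A_l=\frac{x_l^{m-1}}{\prod_{j\ne l}(x_l-x_j)},
$$
and expanding at $t=0$ one recovers $h_k=\sum_lA_lx_l^k$ for $k\ge 0$. Expanding the same rational function at $t=\infty$, together with the identity $H_m^-(t)=-H_m^+(t)$ established in the lemma, gives after sign matching the uniform formula
$$
H_k(x_1,\ldots,x_m)=\sum_{l=1}^m A_l\,x_l^k\qquad\text{for every }k\in\mathbb Z,
$$
with the degenerate range $-m<k<0$ recovering $H_k=0$ as a Lagrange interpolation identity.

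With this formula in hand, for any integers $\mu_1,\ldots,\mu_p$ and any $q$ we have the factorisation
$$
\bigl(H_{\mu_i+j}\bigr)_{1\le i\le p,\,1\le j\le q}=UV,\qquad U_{il}=x_l^{\mu_i},\quad V_{lj}=A_l\,x_l^j,
$$
with $U$ of size $p\times m$ and $V$ of size $m\times q$. For part~$1)$ take $p=q=m$ and $\mu_i=\lambda_i-i$; then $\det(H_{\lambda_i-i+j})=\det U\cdot\det V$. A direct computation gives $\det U=(x_1\cdots x_m)^{-m}\{x_1^{\lambda_1+m-1}\cdots x_m^{\lambda_m}\}=(x_1\cdots x_m)^{-m}E_\lambda\Delta_m$, and pulling the factors $A_l$ out of the rows of $V$ and applying the Vandermonde formula yields $\det V=(x_1\cdots x_m)^m/\Delta_m$. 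These factors cancel, leaving $\det(H_{\lambda_i-i+j})=E_\lambda$. For part~$2)$ take $p=q=m+1$; now $U$ is $(m+1)\times m$ and $V$ is $m\times(m+1)$, so the $(m+1)\times(m+1)$ product $UV$ has rank at most $m$ and its determinant vanishes.

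The main obstacle is the sign and index bookkeeping in the first step: one has to verify that the Laurent expansion at $\infty$ of $\sum_lA_l/(1-x_lt)$ agrees, after the sign flip $H_m^-=-H_m^+$, with the values of $H_k$ for negative $k$ as defined via $h_k^{(\infty)}$, including the vanishing in the middle range $-m<k<0$. Once the uniform formula $H_k=\sum_lA_lx_l^k$ is in place, the remaining computations are two standard Vandermonde determinant evaluations, and part~$2)$ becomes automatic from the shape of the factorisation.
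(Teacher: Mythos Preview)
Your argument is correct, and it is genuinely different from the paper's proof. The paper proceeds by induction on $m$: it uses the recurrence $H_k(x)-x_1H_{k-1}(x)=H_k(x_2,\dots,x_m)$ from Lemma~\ref{induc}(2), performs the column operations ``multiply column $j$ by $x_1$ and subtract from column $j+1$'' on the $m\times m$ determinant, and then expands along the first row to reduce to the $(m-1)$-variable case; part~2) is handled by the same column operations and induction. Your route instead extracts from the partial-fraction identity in Lemma~\ref{induc} the closed form $H_k=\sum_{l=1}^m A_l x_l^{\,k}$ valid for all $k\in\mathbb Z$ (the vanishing in the range $-m<k<0$ being the classical identity $\sum_l x_l^{\,r}/\prod_{j\ne l}(x_l-x_j)=0$ for $0\le r\le m-2$), and then writes the matrix $(H_{\lambda_i-i+j})$ as a product $UV$ with inner dimension $m$, so that part~1) is two Vandermonde evaluations and part~2) is an immediate rank bound. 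Your approach is cleaner here and dispatches both parts at once; the paper's inductive mechanism, on the other hand, is what it later reuses verbatim in the supersymmetric setting (Lemma~\ref{alt1} and equation~(\ref{ind21})), where a single partial-fraction closed form for $H_k(x,y)$ is not available in the same shape.
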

\begin{proof}    To prove the first statement we will use induction on $m$. If $m=1$, then  $E_k(x)=H_k(x)$ by definition and  the proof is trivial.  Let $m>1$. 
Let us denote by  $F_{\lambda}(x_1,\dots,x_m)$ the determinant 
$$
\left|\begin{array}{cccc}
   H_{\lambda_{1}}& H_{\lambda_{1}+1}& \ldots & H_{\lambda_{1}+m-1}\\
\vdots&\vdots&\ddots&\vdots\\
  H_{\lambda_{m}-m+1}& H_{\lambda_{m}-m+2}& \ldots & H_{\lambda_{m}}\\
  \end{array}\right|
$$
Then we have 
$$
F_{\lambda}(x_1,\dots,x_m)\Delta_m(x)=\left|\begin{array}{cccc}
   H_{\lambda_{1}}\Delta_m(x)& H_{\lambda_{1}+1}\Delta_m(x)& \ldots & H_{\lambda_{1}+m-1}\Delta_m(x)\\
\vdots&\vdots&\ddots&\vdots\\
  H_{\lambda_{m}-m+1}& H_{\lambda_{m}-m+2}& \ldots & H_{\lambda_{m}}\\
  \end{array}\right|
$$
 Since $H_k\Delta_m(x)=\{x_1^{k+m-1}x_2^{m-2}\dots x_m^0\}$  we get
$$
F_{\lambda}(x_1,\dots,x_m)\Delta_m(x)=\left\{\left|\begin{array}{cccc}
x_{1}^{\lambda_{1}}& x_{1}^{\lambda_{1}+1}& \ldots & x_{1}^{\lambda_{1}+m-1}\\
   H_{\lambda_{2}-1}& H_{\lambda_{2}}& \ldots & H_{\lambda_{2}+m-2}\\
\vdots&\vdots&\ddots&\vdots\\
  H_{\lambda_{m}-m+1}& H_{\lambda_{m}-m+2}& \ldots & H_{\lambda_{m}}\\
  \end{array}\right|x_{1}^{m-1}\dots x_{m}^0\right\}
$$
Now let us multiply every column (except the last one)  by $x_{1}$ and  subtract  the result from the following column. Then  we can apply  formula   $2)$ from Lemma \ref{induc}  and expand the determinant    along the first row. Then we get

\begin{equation}\label{ind1}
F_{\lambda}(x_1,\dots,x_m)\Delta_m(x)=\left\{F_{\mu}(x_2,\dots,x_m)x_{1}^{\lambda_{1}+m-1}x_{2}^{m-2}\dots x_{m}^0\right\}
\end{equation}
where $\mu=(\lambda_2,\dots,\lambda_m)$ and we can apply  inductive assumption.  

Let us prove the second statement also induction by $m$. If  $m=1$, then for any integers $\lambda_1, \lambda_2$ we have 
$$
\left|\begin{array}{cc}
H_{\lambda_{1}}& H_{\lambda_{1}+1}\\
   H_{\lambda_{2}-1}& H_{\lambda_{2}} \\
  \end{array}\right|=\left|\begin{array}{cc}
x^{\lambda_{1}}& x^{\lambda_{1}+1}\\
   x^{\lambda_{2}-1}& x^{\lambda_{2}} \\
  \end{array}\right|=0
  $$
If  $m>1$,  then we can use  the same arguments as before and induction.
\end{proof}
\begin{remark}\label{trudy} It is easy to check, that  if  $l\le m$, then
$$
\det (H_{\lambda_i-i+j})_{1\le i,j\le m}=\det (H_{\lambda_i-i+j})_{1\le i,j\le l}
$$
  where $\lambda_{l+1}=\dots=\lambda_{m}=0$.
\end{remark}
\begin{corollary}\label{generators} Polynomials $H_k,\, k\in\Bbb Z$ generate the ring  $\Lambda^{\pm}_m$.
\end{corollary}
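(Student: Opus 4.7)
The plan is to combine the two results just proved: Theorem \ref{basis} gives a linear basis of $\Lambda^{\pm}_m$ consisting of the Euler characters $E_\lambda$ with $\lambda_1 \geq \dots \geq \lambda_m$ integers, and Theorem \ref{alt}(1) expresses each such $E_\lambda$ as a polynomial expression in the $H_k$'s. So nothing beyond quoting these two facts is needed.

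Concretely, first I would invoke Theorem \ref{basis} to reduce the claim to showing that every $E_\lambda$ with $\lambda_1 \geq \dots \geq \lambda_m$ lies in the subring $\mathbb{Z}[H_k : k \in \mathbb{Z}] \subseteq \Lambda^{\pm}_m$. Then I would apply the Jacobi-Trudy identity of Theorem \ref{alt}(1),
\[
E_\lambda(x_1,\dots,x_m) = \det(H_{\lambda_i - i + j})_{1 \leq i,j \leq m},
\]
and observe that the right-hand side is, by the Leibniz expansion, an integer polynomial in the entries $H_{\lambda_i - i + j}$, hence an integer polynomial in the $H_k$'s.

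Since a generating set of $\Lambda^{\pm}_m$ over $\mathbb{Z}$ (in fact a basis) lies in $\mathbb{Z}[H_k : k \in \mathbb{Z}]$, we conclude $\Lambda^{\pm}_m = \mathbb{Z}[H_k : k \in \mathbb{Z}]$, as required.

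There is no real obstacle here; the corollary is immediate from the two preceding theorems. The only substantive content has already been handled: the existence of a basis of Euler characters (Theorem \ref{basis}) and the Jacobi-Trudy expression of these characters as determinants in the $H_k$'s (Theorem \ref{alt}(1)). Note that, unlike the classical symmetric polynomial case, the generating set $\{H_k\}_{k\in\mathbb{Z}}$ is infinite and far from algebraically independent; describing the relations among the $H_k$'s is the task of the later sections, and Theorem \ref{alt}(2) already furnishes an important family of such relations.
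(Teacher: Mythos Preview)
Your argument is correct and is exactly the intended one: the paper states this corollary immediately after Theorem~\ref{alt} without proof, the implicit reasoning being precisely the combination of Theorem~\ref{basis} (the $E_\lambda$ form a basis) with the Jacobi--Trudy identity of Theorem~\ref{alt}(1).
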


There exists one more formula of Jacobi-Trudy type  for polynomials $E_{\lambda}$. It is called composite Schur functions \cite {King,Mo}.
\begin{lemma}\label{Com} Let  $\lambda=(\lambda_1,\dots,\lambda_m)$  be non-increasing sequence of integers. Let us write it in the form
$$
(\lambda_1,\dots,\lambda_m)=(\mu_1,\dots,\mu_r,0,\dots,0,-\nu_s,\dots,-\nu_1)
$$
where, $\mu,\,\nu$ are some partitions of the length  $r$ и $s$  correspondently. Then the following equality is valid 
$$
E_{\lambda}=\left|\begin{array}{cccc}
   h^*_{\nu_{s}}& h^*_{\nu_{s}-1}& \ldots & h^*_{\nu_{s}-s-r+1}\\
\vdots&\vdots&\ddots&\vdots\\
  h^*_{\nu_{1}+s-1}& h^*_{\nu_{1}+s-2}& \ldots & h^*_{\nu_{1}-r}\\
   h_{\mu_{1}-s}& h_{\mu_{1}-s+1}& \ldots & h_{\mu_{1}+r-1}\\
\vdots&\vdots&\ddots&\vdots\\
  h_{\mu_{r}-s-r+1}& h_{\mu_{r}-s-r+2}& \ldots & h_{\mu_{r}}\\
\end{array}\right|
  $$
\end{lemma}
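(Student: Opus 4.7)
Plan: I would prove the identity by reducing the $m\times m$ Jacobi--Trudy determinant from Theorem~\ref{alt} to an $(r+s)\times(r+s)$ determinant via a Schur-complement step on the block coming from the zero parts of $\lambda$.

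First, decompose $M=(H_{\lambda_i-i+j})_{1\le i,j\le m}$ into three row blocks according to whether $\lambda_i$ is positive (top $r$ rows), zero (middle $m-r-s$ rows), or negative (bottom $s$ rows), and analogously into three column blocks. By Lemma~\ref{induc}(1), the middle rows satisfy $H_{j-i}=0$ for $j<i$ and $H_0=1$, so the middle-middle block $L$ is upper unitriangular and the middle-left block is zero. Moreover, the top $r$ rows contain only $h$-entries: the index $\mu_i-i+j\le -m$ is impossible when $\mu_i\ge 0$ and $i\le r\le m$, so no $h^{(\infty)}$-term appears there.

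Next, I would permute the middle row/column block into the top-left corner (by the same permutation on rows and columns, hence without a sign change) and apply the Schur-complement formula. Since $\det L=1$, the determinant of $M$ equals that of the $(r+s)\times(r+s)$ Schur complement, whose four corner blocks are the original $A,C,E,G$ corrected by $BL^{-1}D$ or $FL^{-1}D$. The inverse $L^{-1}$ is governed by the generating-function identity $H^+(t)\prod_i(1-x_it)=1$ (a consequence of Lemma~\ref{induc}): its entries are, up to sign, the elementary symmetric polynomials $e_k$. One then identifies the blocks with the target: for the top $r$ rows, $A$ and the corrected $C$ combine to give $h_{\mu_i-s-i+j}$ entries matching the bottom part of the target; for the bottom $s$ rows, the substitution $h^{(\infty)}_k=(-1)^me_m^{-1}h^*_{-m-k}$ converts $h^{(\infty)}$-entries to $h^*$-entries, and after extracting a common factor $-(-1)^me_m^{-1}$ from each of the $s$ bottom rows and swapping the row blocks (sign $(-1)^{rs}$), one obtains the target $h^*_{\nu_{s-k+1}+k-j}$ entries with all extraneous factors cancelling. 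A sanity check in the smallest nontrivial case $m=2$, $\mu=\nu=(1)$ reduces the claim to the elementary identity $h_2=e_2(h_1h^*_1-1)$, directly verifiable.

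The main obstacle is showing that the Schur-complement correction $FL^{-1}D$ in the bottom-right block exactly cancels the $h$-type contributions of $G$, leaving only the $h^{(\infty)}$-part (which then becomes $h^*$ after the conversion above). This cancellation reflects the fact that the two formal expansions of $\prod_i(1-x_it)^{-1}$ ---\ the one at $t=0$ and the one at $t=\infty$ ---\ share the same poles, so their difference is killed up to a controlled error by multiplication with $\prod_i(1-x_it)$; this is exactly the content of the generating-function manipulations in Lemma~\ref{induc}. Tracking signs and index shifts through the block reduction is the technical but essentially routine heart of the proof.
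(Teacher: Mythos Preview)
Your Schur-complement route is genuinely different from the paper's and considerably more laborious. The paper's key move is to observe at the outset that one may take $r+s=m$, by allowing $\mu$ and $\nu$ to carry trailing zero parts; the target determinant is then already $m\times m$ and there is no middle block to eliminate. From there the argument is essentially two substitutions: every $h_i$ occurring has index $i>-m$, hence $h_i=H_i$, and every $h^*_j$ equals $H_{-m}^{-1}H_{-m-j}$ (both by Lemma~\ref{induc}). Factoring $H_{-m}^{-1}=(-1)^{m+1}x_1\cdots x_m$ out of each of the $s$ starred rows and reordering rows produces a pure $H$-Jacobi--Trudy determinant, which Theorem~\ref{alt} identifies as $E_{\lambda-(s,\dots,s)}$; the elementary shift identity $(x_1\cdots x_m)\,E_{\chi}=E_{\chi+(1,\dots,1)}$ then gives $E_\lambda$.

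Your approach buys a generality you do not need (treating $r+s<m$ directly) at the price of the computation you yourself flag as the ``main obstacle'': that $FL^{-1}D'$ cancels exactly the $h$-part of $G$, leaving pure $h^{(\infty)}$. This does not fall out of Lemma~\ref{induc} immediately --- one has to control the finite truncation of $L^{-1}$ against the infinite generating-function identity --- and even once that is done, the block-matching is more delicate than your sketch indicates: your uncorrected block $A$ has entries $h_{\mu_i-i+j}$ for $1\le j\le r$, which coincide with the \emph{last} $r$ columns of the target $h$-rows, not the first, so a further column rearrangement and the identification of $C-BL^{-1}D'$ with the remaining $h$-entries are still required. None of this bookkeeping survives once you pad to $r+s=m$.
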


\begin{proof}  It is easy to see that we can suppose that $r+s=m$,  considering patritions with zero parts. 
Since the determinant has size  $m\times m$, then by Lemma  \ref{induc} the following equalities are valid
$
h_i=H_i,\,\,h^*_j=H_{-m}^{-1}H_{-m-j}
$
for all elements of the determinant. So we can  bring $H_{-m}^{-s}$ in front of the determinant and take into account that  $H_{-m}^{-1}=(-1)^{m+1}x_1\dots x_m$. Then after reordering rows and taking sings into account  we get
$$
E_{\lambda}=(x_1\dots x_m)^s\left|\begin{array}{cccc}
H_{\mu_{1}-s}& H_{\mu_{1}-s+1}& \ldots & H_{\mu_{1}+r-1}\\
\vdots&\vdots&\ddots&\vdots\\
  H_{\mu_{r}-s-r+1}& H_{\mu_{r}-s-r+2}& \ldots & H_{\mu_{r}}\\
   H_{-m-\nu_{s}}& H_{-m-\nu_{s}+1}& \ldots & H_{-\nu_{s}-1}\\
\vdots&\vdots&\ddots&\vdots\\
  H_{-m-\nu_{1}-s+1}& H_{-m-\nu_{1}-s+2}& \ldots & H_{-\nu_{1}-s}\\
\end{array}\right|
  $$
But it it easy to see from the definition, that
$$
(x_1\dots x_m)E_{\lambda_1.\dots,\lambda_m}(x_1,\dots,x_m)=E_{\lambda_1+1.\dots,\lambda_m+1}(x_1,\dots,x_m)
$$
 and Lemma follows.
\end{proof}

Now we are going to describe the  ring $\Lambda_m^{\pm}$ in terms of  generators and relations.  First, we  prove  the following Lemma.
\begin{lemma}\label{identity} Let  $A$ be a matrix of the size  $n\times(n+1)$ and  
$$
A=\left(A_{1},A_{2},\dots A_{n+1}\right)
$$
its columns. Let   $A^{(l)}=A\setminus A_l$ be matrix of the size $n\times n$  obtaning from  $A$    by deleting the column $A_l$.  For any subset  $I\subset \{1,\dots,n\}$ define also  matrix $A(I)$ of the size $n\times n$  by  the formula
$$
r_i(A(I))=\begin{cases}r_i(A^{(n+1)}),\,i\in I\\
r_i(A^{(1)}),\,\, i\notin I
\end{cases}
$$
where $r_i(B)$  denotes  the  $i$-th row  of the matrix $B$. 

Then the following equality is true
$$
\det A^{(l)}=\sum_{I\subset \{1,\dots, n\}}\det A(I)
$$
and sum is taken over all subset cardinality   $l-1$.
\end{lemma}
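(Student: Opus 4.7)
The plan is to introduce the one-variable polynomial
\[
D(z) \;=\; \det\bigl(z\,A^{(n+1)}+A^{(1)}\bigr)
\]
and to evaluate it in two different ways. Expanded in the rows, $D(z)$ will reproduce the right-hand side of the claim; expanded in the columns, it will collapse to the left-hand side. Comparing the coefficients of $z^{l-1}$ then gives the identity.

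For the row expansion I would use that the $i$-th row of $zA^{(n+1)}+A^{(1)}$ is $z\,r_i(A^{(n+1)})+r_i(A^{(1)})$, so multilinearity of the determinant in each row separately yields
\[
D(z)\;=\;\sum_{I\subset\{1,\dots,n\}}z^{|I|}\det A(I),
\]
the summand indexed by $I$ arising from the choice of the $zA^{(n+1)}$-part in the rows $i\in I$ and the $A^{(1)}$-part in the rows $i\notin I$. Grouping by cardinality, the coefficient of $z^{l-1}$ equals $\sum_{|I|=l-1}\det A(I)$.

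For the column expansion I would observe that the $j$-th column of $zA^{(n+1)}+A^{(1)}$ is $zA_j+A_{j+1}$, since deleting column $n+1$ of $A$ keeps column $j$ in place while deleting column $1$ shifts the indexing by one. Multilinearity in each column then produces a sum $D(z)=\sum_{S\subset\{1,\dots,n\}}z^{|S|}\det M_S$, where $M_S$ has $j$-th column $A_{k_j}$ with $k_j=j$ for $j\in S$ and $k_j=j+1$ for $j\notin S$. Whenever the indices $k_1,\dots,k_n$ are not pairwise distinct, $M_S$ has two repeated columns and $\det M_S=0$. Since $k_j\in\{j,j+1\}$, distinctness forces $j\mapsto k_j$ to be strictly increasing, and a short induction pins down that the only surviving choices are the $n+1$ nested ones $S=\{1,\dots,l-1\}$ for $l\in\{1,\dots,n+1\}$; for such $S$ one has $M_S=A^{(l)}$ and $|S|=l-1$. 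This gives $D(z)=\sum_{l=1}^{n+1}z^{l-1}\det A^{(l)}$.

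The non-trivial step is this combinatorial reduction in the column expansion; everything else is bookkeeping with multilinearity. Once both evaluations of $D(z)$ are in hand, extracting the coefficient of $z^{l-1}$ from each side produces the claimed identity $\det A^{(l)}=\sum_{|I|=l-1}\det A(I)$.
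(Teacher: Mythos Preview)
Your proposal is correct and follows essentially the same route as the paper: both consider the determinant of the one-parameter combination $A^{(1)}+aA^{(n+1)}$ (your $D(z)$ with $z=a$), expand once by rows and once by columns, and equate coefficients. The only difference is that you spell out explicitly why the column expansion collapses to $\sum_{l=1}^{n+1} a^{l-1}\det A^{(l)}$ via the ``initial segment'' argument on the surviving subsets $S$, whereas the paper simply asserts this outcome of the column multilinearity; the underlying idea is identical.
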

\begin{proof}
Let us consider matrix $A^{(1)}+aA^{(n+1)}$, where  $a$ is indeterminate and decompose determinant in two ways: using rows and columns. 
 If we use multilinear property  of  the determinant in rows, then we get  
$$
\det\left(A^{(1)}+aA^{(n+1)}\right)=\sum_{l=0}^na^l\sum_{ Card(I)=l}\det A_{I}
$$
where $Card(I)$ denotes the number of elements in $I$.
 If we use multilinear property of the  determinant in columns then we get 
 $$
\det\left(A^{(1)}+aA^{(n+1)}\right)=\sum_{l=1}^{n+1}a^{l-1}\det A^{(l)}
$$
and proof follows.
\end{proof}
\begin{definition}\label{Re} Let $z=(z_i),i\in\Bbb Z$ be an infinite sequence of variables and $I=(i_1,\dots i_p)\in{\Bbb Z}^p$ finite sequence of integers. Set 
$$
R_I(z)=\det(z_{i_{\alpha}+\beta-1})_{1\le,\alpha,\beta\le p}
$$
We will   call $p$ as the length of $I$ and denote it by $ \hat l(I)$.
\end{definition}
Now we are ready to describe the  ring  $\Lambda_m^{\pm}$ in terms of the generators and relations. 
\begin{thm}\label{relations1} Ring $\Lambda_m^{\pm}$ is isomorphic any of the following rings:

$1)$   $U^+_{m,0}$ which is generated by  $u_i,\hat v_i,\, i\in \Bbb N$ with relations
$$
R_{I}(w)=0,\,\,\, \text{for any}\,\, I\in \Bbb Z^{m+1}
$$ 
where $w_i=u_i-\hat v_{-m-i},\, i\in \Bbb Z$ and we suppose that $u_0=1$ и $u_i=\hat v_i=0,\,i<0$.

$2)$   $U^{\pm}_{m,0}$ which is  generated by  $t, u_i,v_i,\, i\in \Bbb N$ with relations
$$
R_{I}(w)=0,\,\,\, \text{for any}\,\, I\in \Bbb Z^{m+1}
$$ 
 where $w_i=u_i-tv_{-m-i},\, i\in \Bbb Z$ and we assume that  $u_0=v_0=1$ and $u_i=v_i=0,\,i<0$.

\end{thm}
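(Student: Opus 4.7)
The plan is to exhibit the natural candidate homomorphism $\phi\colon U^+_{m,0}\to\Lambda^{\pm}_m$ defined on generators by $u_i\mapsto h_i$ and $\hat v_j\mapsto h^{(\infty)}_{-m-j}$, so that by part~$1)$ of Lemma~\ref{induc} one has $\phi(w_i)=H_i$ for every $i\in\Bbb Z$. Under $\phi$ every defining relation $R_I(w)=0$ is sent to $R_I(H)=\det(H_{i_\alpha+\beta-1})=0$, which is precisely the identity of Theorem~\ref{alt} part~$2)$; hence $\phi$ is a well-defined ring homomorphism. Surjectivity is immediate from Corollary~\ref{generators}, because the $H_i=\phi(w_i)$ already generate $\Lambda_m^{\pm}$.

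The heart of the argument is injectivity, and my strategy is to exhibit an explicit family that $\phi$ maps bijectively onto the basis of $\Lambda_m^{\pm}$. Consider
$$
\mathcal B=\{R_\lambda(w):\lambda_1\ge\dots\ge\lambda_m,\ \lambda_i\in\Bbb Z\}\subset U^+_{m,0}.
$$
By Theorem~\ref{alt} part~$1)$ one has $\phi(R_\lambda(w))=E_\lambda$, and by Theorem~\ref{basis} the $E_\lambda$ form a $\Bbb Z$-basis of $\Lambda_m^{\pm}$. Consequently $\mathcal B$ is automatically $\Bbb Z$-linearly independent (any dependence would descend to one among the $E_\lambda$), so it is enough to show that $\mathcal B$ $\Bbb Z$-spans $U^+_{m,0}$; the isomorphism will then follow because $\phi$ becomes a bijection of bases.

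The spanning property of $\mathcal B$ is the main obstacle. The intended tool is Lemma~\ref{identity}, which, applied to $m\times(m+1)$ arrays built from shifted rows of $w$'s, turns the vanishings $R_J(w)=0$ for $|J|=m+1$ into straightening identities: for any length-$m$ sequence $I$ whose base indices $I_\alpha-\alpha$ are not strictly decreasing, $R_I(w)$ rewrites as a signed sum of $R_\lambda(w)$ with strictly decreasing base indices plus products of $w$'s of strictly smaller combined complexity. The conventions $w_0=1$, $w_{-1}=\dots=w_{-m+1}=0$ collapse $R_{(k,0,\dots,0)}(w)$ to $w_k$ for $k\ge 0$, and the analogous expansion of $R_{(0,\dots,0,k)}(w)$ for $k\le 0$ expresses $w_{k-m+1},\dots,w_k$ as an element of $\mathcal B$ plus products of $w_j$'s of strictly more negative index. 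Organising by total $w$-degree, with a secondary induction on a non-sortedness statistic and, for the negative-index tail, on depth below $-m$, reduces an arbitrary monomial in the $u_i,\hat v_j$ to a $\Bbb Z$-linear combination of $\mathcal B$, which completes part~$1)$. Part~$2)$ is proved by the same recipe after extending $\phi$ by $v_j\mapsto h^*_j$ and $t\mapsto (-1)^m(x_1\cdots x_m)^{-1}$, so that again $w_i\mapsto H_i$; the defining relations, the candidate basis $\mathcal B$ and the straightening argument involve only the $w_i$ and thus transfer verbatim, with the extra observation that $w_{-m}=-t$ and $w_{-m-j}=-tv_j$ express $t$ and the $v_j$ in terms of $\mathcal B$, so that no extra degrees of freedom appear on the $U^{\pm}_{m,0}$-side.
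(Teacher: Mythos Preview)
Your strategy for part~$1)$ coincides with the paper's: define $\phi$ on generators so that $\phi(w_i)=H_i$, verify the relations via Theorem~\ref{alt}$\,2)$, and then argue that the $R_I(w)$ with $|I|=m$ span $U^+_{m,0}$ so that $\phi$ matches linear generators with a basis. Two minor points: your indexing $\phi(R_\lambda(w))=E_\lambda$ is off by the shift $I\leftrightarrow(\lambda_1,\lambda_2-1,\dots,\lambda_m-m+1)$, and your ``straightening'' description is vague where the paper is explicit (it shows inductively that $R_I(w)\,w_j$ lies in the span of $R_{I'}(w)$'s by expanding the length-$(m{+}1)$ relation $R_{(I,\,\ast)}(w)=0$ along its last row and invoking Lemma~\ref{identity}). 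But the skeleton is correct.

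Part~$2)$, however, has a genuine gap. You assert that ``$w_{-m}=-t$ and $w_{-m-j}=-tv_j$ express $t$ and the $v_j$ in terms of $\mathcal B$''. The first identity does express $t$, but the second only expresses the product $tv_j$; it does \emph{not} give $v_j$ itself. Since $v_j$ is a free generator of $U^{\pm}_{m,0}$, you cannot conclude that $\mathcal B$ spans unless you know $t$ is invertible in $U^{\pm}_{m,0}$, and nothing in your argument establishes this. Without it, the subring generated by the $w_i$ could be strictly smaller than $U^{\pm}_{m,0}$, and your spanning claim collapses.

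This is precisely the point the paper isolates. Rather than rerunning the basis argument, the paper proves part~$2)$ by constructing mutually inverse homomorphisms $U^+_{m,0}\rightleftarrows U^{\pm}_{m,0}$; the only nontrivial step is showing that $\hat v_0$ (equivalently $t$, since $w_{-m}=-\hat v_0=-t$) is invertible. This is extracted from the single relation $R_{(0,-1,\dots,-m)}(w)=0$: since $w_0=1$ and $w_{-1}=\cdots=w_{1-m}=0$, expansion of this $(m{+}1)\times(m{+}1)$ determinant yields $(-1)^m w_{-m}\cdot\det(w_{\,\cdot\,})=-1$, exhibiting an explicit inverse for $w_{-m}$. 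Once you supply this invertibility step, your ``same recipe'' approach for part~$2)$ can be completed; alternatively, you can simply deduce part~$2)$ from part~$1)$ via the isomorphism $U^+_{m,0}\cong U^{\pm}_{m,0}$ as the paper does.
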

\begin{proof} Let us prove the first statement. Let $S=\{u_1,\dots, \hat v_0,\dots\}$  be the set of generators of the ring $U_{m,0}$. Consider a map  
$$
\varphi: S \longrightarrow \Lambda_m^{\pm},\,\, \varphi(u_i)=h_i,\,i\ge1,\,\,\varphi(\hat v_i)=(-1)^m(x_1\dots x_m)^{-1}h^*_{i},\,i\ge0
$$
Then it is easy to check that $\varphi(w_i)=H_i,\,i\in \Bbb Z$.  Therefore the map $\varphi$ can be extended as the homomorphism on the   whole  ring $\varphi :U_{m,0}\rightarrow \Lambda_m^{\pm}$.  Now let us prove that this map  is an isomorphism. For that we are going to construct some family of linear generators of the ring $\Lambda_m^{\pm}$. Namely  we will prove that ring  $U_{m,0}$ is a linear span of the elements $R_{I}(w)$ where $I\in \Bbb Z^{m}$.  It is enough to prove that any product  $w_{j_1}\dots w_{j_r}$ for any $r\ge 0$ can be expressed as a linear combination  some of $R_I(w)$ with $\hat l(I)=m$. We will use induction on $r$. If  $r=0$ then the product is equal to $1$ and the following equality is easy to verify
$
1= R_{0,1,\dots,1-m}(w)
$.
So we can assume that  $r>1$ and we  will  prove that product $R_{i_1,\dots, i_m}(w)w_j$ 
 where $j\notin \{0,-1,\dots, 1-m\}$ is a linear combination some of $R_{j_1,\dots,j_m}(w)$.
 
 There are two cases  $j>0$ and  $j<1-m$.  Consider the first case. We are going to use induction on $j$. If $j=1$, then by definition of the ring  $U_{m,0}$ we have  $R_{\tilde I}(w)=0$, where $\tilde I=(i_1,i_2,\dots,i_m,1-m)$.
Expanding the determinant along its last row we get
$$
\sum_{i\in\{1-m,\dots,0,1\}}(-1)^{i+1}R_{\tilde I}^{(i)}(w) w_{i}=0
$$

where $R_{\tilde I}^{(i)}(w)$ means the  determinant obtaining from  $R_{\tilde I}(w)$  by deleting row and column containing element  $w_i$. Therefore 
$$
R_I(w)w_1+\sum_{i\in\{1-m,\dots,0\}}(-1)^{i+1}R_{\tilde I}^{(i)}(w) w_{i}=0
$$
So $R_I(w)w_1=R_{\tilde I}^{(0)}(w)$. But if we apply  Lemma \ref{identity}  to the matrix
$$
A= \left(\begin{array}{cccc}
   w_{i_{1}}& w_{i_{1}+1}& \ldots & w_{i_{1}+m}\\
\vdots&\vdots&\ddots&\vdots\\
  w_{i_{m}}& w_{i_{l}+1}& \ldots & w_{i_{l}+m}\\
  \end{array}\right)
$$
then we see that $R_{\tilde I}^{(0)}(w)$ can be expressed as a linear combination  some of $R_I(w),\,$ with $\hat l (I)=m$.
 If  $j>1$ then it is enough to consider relation $R_{\tilde I}(w)=0$, where $\tilde I=(i_1,\dots,i_m,j-m)$  and to use inductive assumption. If $j<1-m$, then we can use the same arguments applying to relation $R_{\tilde I}(w)=0$, where $\tilde I=(i_1,\dots,i_m,j)$.  Thereby we proved that $U_{m,0}$  is  a linear span    of $R(i_1,\dots,i_m)$ with $\hat l( I)=m$.  

Therefore by Theorem  \ref{alt}
 $$
 \varphi(R_{i_1,\dots,i_m}(w))=E_{i_1,i_2+1\dots,i_m+m-1}(x_1,\dots,x_m)
 $$
 and by  Theorem   \ref{basis}  elements $\varphi(R_{i_1,\dots,i_m}(w)),\,i_1>i_2>\dots>i_m$ form a basis  of the ring $\Lambda_m^{\pm}$. Therefore homomorphism  $\varphi$ is an isomorphism and we  have proved the first statement.

 Now let us prove the second statement. We shall prove that    $U_{m,0}$ and $U^{\pm}_{m,0}$ are isomorphic. From the definition of these rings there exists a homomorphism 
 $$
 \varphi : U_{m,0}\longrightarrow U^{\pm}_{m,0},\quad \varphi(u_i)=u_i,\,i\ge1,\,\varphi(\hat v_i)=tv_i,\,i\ge0
 $$
 Let us construct  an inverse homomorphism  $\psi : U^{\pm}_{m,0}\rightarrow U_{m,0}$. First let us prove  that  element  $\hat v_0$  is invertible in the ring $U_{m,0}$. 
 Consider relation 
$$
R_{0,1,\dots,m}(w)=\left|\begin{array}{cccc}
   w_{0}& w_{1}& \ldots & w_{m}\\
\vdots&\vdots&\ddots&\vdots\\
  w_{-m}& w_{-m+1}& \ldots & w_{0}\\
  \end{array}\right|=0
$$
Since  $w_0=1,w_{-1}=\dots=w_{1-m}=0$, we have an equality
$$
(-1)^mw_0\left|\begin{array}{cccc}
   w_{1}& w_{2}& \ldots & w_{m}\\
\vdots&\vdots&\ddots&\vdots\\
  w_{1-m}& w_{2-m}& \ldots & w_{1}\\
  \end{array}\right|=-1
$$
Therefore  $\hat v_0=-w_m$  is invertible and we can define homomorphism  $\psi$
$$
\psi : U^{\pm}_{m,0}\longrightarrow U_{m,0}, \quad \psi(u_i)=u_i,\,i\ge1,\,\psi(t)=\hat v_0,\,\psi( v_i)=\hat v_0^{-1}\hat v_i,\,i\ge1
$$
It is easy to check, that  $\varphi,\psi$ are mutually inverse homomorphisms. This proves the second statement and the Theorem.
\end{proof}
\begin{remark}  In the previous theorem we used two slightly different ways to define the ring of Laurent symmetric polynomials by means of generators and relations. We shall see later that a natural generalisation of the first way gives description of the ring of partially polynomial supersymmetric polynomials and a natural generalisation of the second way  gives a description of the ring Laurent supersymmetric polynomials.
\end{remark}

\section{Supersymmetric partially polynomial  and  polynomial  rings}

\begin{definition}  The following ring
$$
\Lambda^{+y}_{m,n}=\{f\in\Bbb Z[x^{\pm1}_{1},\dots,x^{\pm1}_{m},y_{1},\dots,y_{n}]\mid x_{i}\frac{\partial f}{\partial x_{i}}+y_{j}\frac{\partial f}{\partial y_{j}}\in (x_{i}-y_j)\}
$$
will be called the ring of partially polynomial (in $y$-s) supersymmetric polynomials.
\end{definition}
\begin{definition}  The ring 
$$
\Lambda_{m,n}=\{f\in\Bbb Z[x_{1},\dots,x_{m},y_{1},\dots,y_{n}]\mid x_{i}\frac{\partial f}{\partial x_{i}}+y_{j}\frac{\partial f}{\partial y_{j}}\in (x_{i}-y_j)\}
$$
will be called  the ring of super symmetric polynomials.
\end{definition} 
In this section we describe the rings $\Lambda^{+y}_{m,n}$ and $\Lambda_{m,n}$ in term of the generators and relations.

Let us define  $h_{k},\, h_{k}^{(\infty)}$ by means of expansion at zero and at infinity the following rational function
$$
\frac{\prod_{j=1}^n(1-y_{j}t)}{\prod_{i=1}^m(1-x_{i}t)}=\sum_{k=0}^{\infty}h_{k}t^{k}\,\,
=\sum_{k=-\infty}^{n-m}h_{k}^{(\infty)}t^{k}
$$
It is easy to see that 
$$
h_{k}^{(\infty)}=(-1)^{n-m}\frac{y_{1}\dots y_{n}}{x_{1}\dots x_{m}}h^{*}_{n-m-k},
$$
where $h^{*}_{k}=h_{k}(x_{1}^{-1},\dots,x_{m}^{-1},y_{1}^{-1},\dots,y_{n}^{-1})$. We also assume that  $h^{*}_{k}=h_{k}=0$, if $k<0$.
\begin{definition} For  $k\in\Bbb Z$ set 
\begin{equation}\label{hbig}
H_{k}=h_{k}- h_{k}^{(\infty)}=h_{k}-(-1)^{n-m}\frac{y_{1}\dots y_{n}}{x_{1}\dots x_{m}}h^{*}_{n-m-k}
\end{equation}

\end{definition}
\begin{remark} Previous formulae can be rewritten in the form
$$
H_{k}=\begin{cases} h_{k},\,\, k>n-m\\
h_k- h^{(\infty)}_{k}\,\,\, 0\le k\le n-m\\
-h^{(\infty)}_{k}\,\, k<0
\end{cases}
$$
\end{remark}
\begin{lemma}\label{alt1} The following equalities are valid 

$1)$
$$
H_{k}(x,y)=\sum_{j=0}^n(-1)^je_{j}(y)H_{k-j}(x)
$$

$2)$
$$
H_{k}(x)\Delta(x)\Delta(y)=\left\{\prod_{j=1}^n\left(1-\frac{y_{j}}{x_{1}}\right)x_{1}^kx_{1}^{m-1}x_{2}^{m-2}\dots x_{m}^0y_{1}^{n-1}\dots y_{n}^{0}\right\}
$$
and   $\{ f(x,y)\}$ means alternation over the group $S_{m}\times S_{n}$
$$
\{f(x,y)\}=\sum_{(\sigma,\tau)\in S_m\times S_n}\varepsilon(\sigma)\varepsilon(\tau)f(\sigma x,\tau y)
$$

$3)$
$$
H_{k}(x,y)-x_{1}H_{k-1}(x,y)=H_{k}(x_{2},\dots,x_{m},y)
$$

$4)$ if $m=1$, then for any integer  $k$ 

$$
H_{k}(x_1,y)-x_{1}H_{k-1}(x_1,y)=0
$$

$5)$ For any sequence of integers $\lambda_1,\dots,\lambda_m$ the following equality is true
$$
\det(H_{\lambda_i-i+j})_{1\le i,j\le m}=\prod_{i=1}^m\prod_{j=1}^n\left(1-\frac{y_j}{x_i}\right)E_{\lambda}(x_1,\dots,x_m)
  $$

$6)$ For any sequence of integers $\lambda_1,\dots,\lambda_{m+1}$ we have the following equality
$$
\det(H_{\lambda_i-i+j})_{1\le i,j\le m+1}=0
$$

\end{lemma}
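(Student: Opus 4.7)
The plan is to derive parts 1)--4) from generating-function manipulations together with Theorem~\ref{alt}, and then prove 5) and 6) by a joint induction on $m$.

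For 1), factor $\frac{\prod_j(1-y_j t)}{\prod_i(1-x_i t)}=\prod_j(1-y_j t)\cdot\frac{1}{\prod_i(1-x_i t)}$ and compare $t^k$-coefficients at both $t=0$ and $t=\infty$; this gives $h_k(x,y)=\sum_\ell(-1)^\ell e_\ell(y)h_{k-\ell}(x)$ together with the analogous identity for $h_k^{(\infty)}$, which subtract to yield 1). For 3), multiply the generating function by $(1-x_1 t)$ so that one factor drops out of the $x$-denominator; the recursion then holds separately for $h_k$ and $h_k^{(\infty)}$. Part 4) is 3) specialized to $m=1$: its right-hand side $H_k(y)$ involves no $x$-variables, so the generating function degenerates to the polynomial $\prod_j(1-y_j t)$, whose Laurent expansions at $0$ and at $\infty$ coincide, forcing $H_k(y)=0$. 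For 2), plug the Laurent Jacobi--Trudy identity $H_{k-\ell}(x)\Delta_m(x)=\{x_1^{k-\ell+m-1}x_2^{m-2}\cdots x_m^0\}_{S_m}$ from Theorem~\ref{alt} into 1), multiply by $\Delta(y)=\{y_1^{n-1}\cdots y_n^0\}_{S_n}$ (each $e_\ell(y)$ is $S_n$-symmetric, so it passes through the $y$-alternation), and collect via $\sum_\ell(-1)^\ell e_\ell(y)x_1^{-\ell}=\prod_j(1-y_j/x_1)$.

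For 5) and 6), proceed by simultaneous induction on $m$, with $N=m$ in case 5) and $N=m+1$ in case 6). The base of 5) at $m=1$ follows from iterating 4) starting from $H_n(x_1,y)=h_n(x_1,y)=x_1^n\prod_j(1-y_j/x_1)$, giving $H_k(x_1,y)=x_1^k\prod_j(1-y_j/x_1)$, which matches the claim since $s_{\lambda_1}(x_1)=x_1^{\lambda_1}$; the base of 6) at $m=0$ is $H_k(y)=0$ as already noted. For the inductive step, write
\[
\det(H_{\lambda_i-i+j}(x,y))_{1\le i,j\le N}\,\Delta(x)\Delta(y)=\bigl\{\det(H_{\lambda_i-i+j})\,x_1^{m-1}\cdots x_m^0\,y_1^{n-1}\cdots y_n^0\bigr\}_{S_m\times S_n}.
\]
Using part 2) and multilinearity---all cofactors along the first row are symmetric in $x$ and $y$ and so pass through the alternation---inside this bracket each first-row entry $H_{\lambda_1-1+j}(x,y)$ may be replaced by $H_{\lambda_1-1+j}(x_1,y)=x_1^{\lambda_1-1+j}\prod_k(1-y_k/x_1)$. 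Factor $\prod_k(1-y_k/x_1)$ out of the first row, then apply the column operations $C_{j+1}\leftarrow C_{j+1}-x_1 C_j$ for $j=N-1,\dots,1$: by 3) the entries in rows $i\ge 2$ and columns $j\ge 2$ become $H_{\lambda_i-i+j}(x_2,\dots,x_m,y)$, while the first row collapses to $(x_1^{\lambda_1},0,\dots,0)$. Expanding along the first row leaves
\[
x_1^{\lambda_1}\prod_k(1-y_k/x_1)\cdot\det(H_{\lambda_i-i+j}(x_2,\dots,x_m,y))_{2\le i,j\le N}
\]
inside the alternation---an $(N-1)\times(N-1)$ determinant in the $m-1$ variables $x_2,\dots,x_m$. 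For 6) this residual determinant is zero by the induction hypothesis for 6) in $m-1$ variables, finishing 6). For 5) the induction hypothesis yields $\prod_{i=2}^m\prod_j(1-y_j/x_i)\,s_{\lambda'}(x_2,\dots,x_m)$ with $\lambda'=(\lambda_2,\dots,\lambda_m)$.

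It remains, for 5), to verify the Weyl-type identity
\[
\bigl\{x_1^{\lambda_1+m-1}\,s_{\lambda'}(x_2,\dots,x_m)\,x_2^{m-2}\cdots x_m^0\bigr\}_{S_m}=\bigl\{x_1^{\lambda_1+m-1}x_2^{\lambda_2+m-2}\cdots x_m^{\lambda_m}\bigr\}_{S_m}=s_\lambda(x)\Delta(x);
\]
I would prove it by decomposing $S_m=\bigsqcup_{k=1}^m\sigma_k S_{m-1}$ with $\sigma_k(1)=k$ and $\varepsilon(\sigma_k)=(-1)^{k-1}$, then applying the Weyl numerator formula $s_{\lambda'}(x_2,\dots,x_m)\Delta(x_2,\dots,x_m)=\{x_2^{\lambda_2+m-2}\cdots x_m^{\lambda_m}\}_{S_{m-1}}$ in each coset and reassembling. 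This final bookkeeping passage, together with cleanly justifying the ``replace first row by $H(x_1,y)$'' move via part 2), is the only nontrivial point; everything else is standard Jacobi--Trudi column manipulation adapted to the Laurent/supersymmetric setting.
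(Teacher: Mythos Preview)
Your proof is correct and follows essentially the same route as the paper. Parts 1)--4) are handled exactly as the paper does (generating-function factorization, reduction to Lemma~\ref{induc}), and for 5)--6) the paper simply says ``proved in the same manner as Theorem~\ref{alt}, using statement 2) of this lemma instead of the definition of $H_k(x)$''---which is precisely your induction with the first-row replacement via part~2), the column operations via part~3), and the reduction to $m-1$ variables. Your explicit base cases and the coset argument for the Weyl-type identity $\{x_1^{\lambda_1+m-1}s_{\lambda'}(x_2,\dots,x_m)x_2^{m-2}\cdots x_m^0\}_{S_m}=s_\lambda(x)\Delta_m(x)$ just spell out what the paper leaves implicit (it is the formula labeled (\ref{ind1}) in the proof of Theorem~\ref{alt}).
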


\begin{proof}
Let us prove the first statement. It is enough to prove it separately  for  $h_k$ и $h_k^{(\infty)}.$ In the case of  $h_{k}$ it follows from the equality
  $$
 \sum_{i=0}^{\infty}h_{k}(x,y)t^k=\left(\sum_{j=0}^n(-1)^je_{j}(y)t^j\right) \sum_{i=0}^{\infty}h_{k}(x)t^k
 $$
 A proof for  $h_k^{(\infty)}$ is similar.
 
 The second statement follows from the first one and the definition of $H_k(x)$. 
 
 The third  and the forth  statements also  follow from the first one and Lemma  \ref{induc}. 
 
 Statements  $5),6)$ can be proved in the same manner as in Theorem   \ref{alt}, but instead of the definition   $H_k(x)$  we need to use the  statement  $2)$ from Lemma \ref{alt1}.
 \end{proof}
  
 In order to describe the algebra  $\Lambda^{+y}_{m,n}$ in terms of generators and relations we need to construct a linear basis in this algebra.
\begin{definition} Let  $I=(i_1,\dots,i_p)$ be a sequence of integers and  $J=(j_1,\dots,j_q)$ be a sequence of nonnegative integers. Set 
$$
H(I,J)= \left|\begin{array}{cccc}
   H_{i_{1}}& H_{i_{1}+1}& \ldots & H_{i_{1}+p-1}\\
\vdots&\vdots&\ddots&\vdots\\
  H_{i_{p}}& H_{i_{p}}& \ldots & H_{i_{p}+p-1}\\
  \end{array}\right| h_1^{j_1}h_2^{j_2}\dots h_q^{j_q}=R_{I}(H)h^J
$$
\end{definition}

Let us also denote by  $X^+(m,n)$ the set of pairs   of the sequences $(I,J)$   such that  $I$ strictly decreasing  sequence of integers, $J$ any sequence of nonnegative integers and   
$$
 \hat l(I)\le m,\quad \hat l(J)\le n,\quad \hat l( I)-\hat l (J)=m-n
$$

where as before  the  equality $\hat l(I)=p$ means that $I\in\Bbb Z^p$.
\begin{remark} If $\hat l(I)=0$, then we assume  that $I=\emptyset$ and $R_I(H)=1$ if $\hat l(J)=0$, then we assume that $h^J=1$.
\end{remark}

\begin{thm} \label{basis1}  Elements   $H(I,J),\,(I,J)\in X^{+}(m,n)$  form a linear basis of the ring   $\Lambda^{+y}_{m,n}$.
\end{thm}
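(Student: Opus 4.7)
The plan is to prove Theorem \ref{basis1} in three stages: (i) membership $H(I,J)\in\Lambda^{+y}_{m,n}$, (ii) linear independence via a leading-term argument, and (iii) spanning by an inductive reduction. Stage (iii) is the main obstacle.

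For stage (i), the generating function $\prod(1-y_jt)/\prod(1-x_it)$ is invariant under the substitution $x_i=y_j$, placing each $h_k$ in $\Lambda^{+y}_{m,n}$; the same reasoning applied to the expansion at infinity puts $h^{(\infty)}_k$ in the ring, since $(-1)^{n-m}(y_1\cdots y_n)/(x_1\cdots x_m)\,h^*_{n-m-k}$ remains polynomial in $y$ (every monomial of $h_\ell(x,y)$ is multilinear in the $y_j$ by Lemma \ref{alt1}(1)). Hence $H_k\in\Lambda^{+y}_{m,n}$, and closure under products and determinants gives $H(I,J)\in\Lambda^{+y}_{m,n}$. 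For stage (ii), I would filter $\Lambda^{+y}_{m,n}$ by total $y$-degree and identify leading terms in the associated graded. Lemma \ref{alt1}(5) gives $R_I(H)=\prod_{i,j}(1-y_j/x_i)\,s_\lambda(x)$ for $|I|=m$, whose top-$y$ coefficient is $(-1)^{mn}(y_1\cdots y_n)^m(x_1\cdots x_m)^{-n}s_\lambda(x)$; a padded-determinant variant handles $|I|<m$, still yielding a distinguished top term tied to the Schur-Laurent polynomial associated with $I$. Meanwhile the expansion $h_k(x,y)=\sum_i(-1)^ie_i(y)h_{k-i}(x)$ of Lemma \ref{alt1}(1) shows that $h_1^{j_1}\cdots h_q^{j_q}$ has a prescribed leading term in $y$ depending on $(j_1,\ldots,j_q)$. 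Multiplying, the leading term of $H(I,J)$ pairs a Laurent Schur in $x$ (indexed by the partition recovered from $I$) with a specific product of $e_k(y)$'s and $h_*(x)$-monomials (indexed by $J$); distinct $(I,J)\in X^+(m,n)$ then yield linearly independent leading terms, via Theorem \ref{basis} and the algebraic independence of $e_1(y),\ldots,e_n(y)$ in $\Bbb Z[y]^{S_n}$.

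Stage (iii), spanning, is the main obstacle. I would argue by induction on the $y$-degree of $f\in\Lambda^{+y}_{m,n}$. For the top-$y$-degree component $f^{\mathrm{top}}$, the cancellation condition forces $f^{\mathrm{top}}$ to match the top-$y$ part of some combination $\sum R_I(H)\cdot h_1^{j_1}\cdots h_q^{j_q}$, with the $R_I(H)$-pieces recovered from Theorem \ref{basis} applied to the $x$-dependence and the $h_k$-factors absorbing the remaining $y$-structure. Lifting and subtracting decreases the $y$-degree; iteration terminates at $y$-degree zero, where the cancellation condition forces $f$ to be constant (covered by an explicit $H(I,\emptyset)$ with $|I|=m-n$, analogous to the identity $1=R_{(0,-1,\ldots,1-m)}(w)$ used in the proof of Theorem \ref{relations1}). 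The constraint $|I|-|J|=m-n$ should emerge automatically from super-dimension bookkeeping: the factor $\prod(1-y_j/x_i)$ produces an imbalance of $mn$ between $y$- and $x$-degrees that each $h_k$-factor shifts by one. The delicate step is making the exchange moves precise and showing the reduction stays within $X^+(m,n)$; a supersymmetric analogue of Lemma \ref{identity}, together with the vanishing identity $\det(H_{\lambda_i-i+j})_{(m+1)\times(m+1)}=0$ of Lemma \ref{alt1}(6), should supply the required exchange relations.
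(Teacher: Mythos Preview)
Your route is genuinely different from the paper's, and the difference matters. The paper proceeds by induction on $mn$ through the specialization homomorphism $\varphi_{m,n}\colon\Lambda^{+y}_{m,n}\to\Lambda^{+y}_{m-1,n-1}$ sending $x_m,y_n\mapsto t$. Since $\varphi_{m,n}(H_i)=H_i$ and $\varphi_{m,n}(h_j)=h_j$, every $H(I,J)$ with $|I|<m$ is sent to the element of the same name in $\Lambda^{+y}_{m-1,n-1}$, which is a basis by induction; so one is reduced to showing that $\{H(I,J):|I|=m,\ |J|=n\}$ is a basis of $\ker\varphi_{m,n}$. The kernel is identified with $\prod_{i,j}(1-y_j/x_i)\cdot\bigl(\Lambda_m^{\pm}(x)\otimes\Lambda_n(y)\bigr)$, and a lower-triangular change $e_i(y)\rightsquigarrow h_i(x,y)$ together with Lemma~\ref{alt1}(5) converts the obvious basis $\prod(1-y_j/x_i)\,E_\lambda(x)\,e_1(y)^{j_1}\cdots e_n(y)^{j_n}$ into the desired one. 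The essential feature is that the cases $|I|=m$ and $|I|<m$ are separated cleanly: the former lands in the kernel, the latter is handled by the inductive hypothesis.

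Your $y$-degree filtration does not achieve this separation, and the treatment of $|I|<m$ is where the argument breaks. The ``padded-determinant variant'' you invoke in stage~(ii) presumably means extending a $p\times p$ determinant of $H$'s to an $m\times m$ one by appending rows with $\lambda_i=0$, as in Remark~\ref{trudy}. But that remark rests on $H_{-1}=\cdots=H_{1-m}=0$, which holds only in $\Lambda_m^{\pm}$; in $\Lambda^{+y}_{m,n}$ one has $H_k=-h_k^{(\infty)}\ne0$ for all $k\le n-m$, and when $n\ge m$ \emph{no} negative-index $H_k$ vanishes, so the padding fails outright. Without it you have no identification of the top $y$-term of $R_I(H)$ for $|I|<m$, and hence no proof that distinct $(I,J)$ produce independent leading terms. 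Stage~(iii) inherits the same defect: you cannot match $f^{\mathrm{top}}$ against the $H(I,J)$ if you do not know their leading behaviour, and the claim that the constraint $|I|-|J|=m-n$ ``should emerge automatically from super-dimension bookkeeping'' is not an argument (the factor $\prod(1-y_j/x_i)$ you cite is only present when $|I|=m$). The relations you appeal to at the end, Lemma~\ref{identity} and Lemma~\ref{alt1}(6), are used in the paper for the generators-and-relations description of $U^{+}_{m,n}$, not for the present basis theorem, and do not supply the missing step here.
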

\begin{proof}  Let use induction on $mn$. Let $mn=0$. If $n=0$, then the statement follows from Theorem \ref{alt}.  If $m=0$,  the the statement follows from the main theorem of symmetric functions. Let $mn>0$. L Consider a natural homomorphism
$$
\varphi_{m,n} : \Lambda^{+y}_{m,n}\longrightarrow \Lambda^{+y}_{m-1,n-1}
$$
such that $\varphi_{m,n}(x_m)=\varphi_{n,m}(y_n)=t,$ and   it acts identically on all other variables.  It is clear that  
$$
\varphi(H_i)=H_i,\,\,\,\varphi(h_j)=h_j
$$
 From the inductive assumption it follows that this homomorphism is surjection. Therefore it is enough to prove that the kernel of this homomorphism has a basis consisting of $H(I,J)$ such, that  $\hat l( I)=m,\,\hat l(J)=n$. 
It is easy to check that the following family forms a basis of the kernel  
$$
\prod_{i=1}^m\prod_{j=1}^n\left(1-\frac{y_j}{x_i}\right)E_{\lambda}(x)e_1(y)^{j_1}\dots e_n(y)^{j_n}
$$

where $\lambda_1\ge\lambda_2\ge\dots\ge\lambda_m$ non-increasing sequence of integers and  $j_1,\dots,j_n$ - any sequence of nonnegative integers and   $e_1,\dots,e_n$ elementary symmetric polynomials. Further we have 
$$
h_{1}(x,y)=h_{1}(x)-e_{1}(y)
$$
$$
h_{2}(x,y)=h_{2}(x)-h_1(x)e_1(y)+e_2(y)
$$
$$
\vdots\quad\quad\quad\quad\quad\vdots\quad\quad\quad\quad\quad\vdots\quad\quad\quad\quad\quad\vdots
$$
$$
 h_{n}(x,y)=h_{n}(x)-h_{n-1}(x)e_1(y)+\dots+(-1)^ne_{n}(y)
$$
 So we see that   $h_{1}(x,y),\,h_{2}(x,y),\dots,h_{n}(x,y)$ can be expressed by low-triangular   matrix in terms of   $e_1(y),\,\sigma_2(y),\dots,e_{n}(y)$ with units (up to sign) on the main diagonal.  Therefore there exists an automorphism   $\sigma$ of the algebra   
 $
 \Bbb C[x_1^{\pm1},\dots,x_m^{\pm1},y_1,\dots,y_n] ^{S_m\times S_m}$ such that
$$ 
  \sigma(e_i(y))=h_{i}(x,y),\,i=1,\dots,n,\quad \sigma (e_i(x))=e_i(x),\, i=1,\dots,m.
 $$
 Therefore by Lemma \ref{alt1} we see that
$$
R_{\lambda_{1},\lambda_2+1,\dots,\lambda_{m}+m-1}h_{1}^{j_{1}}h_{2}^{j_2}\dots h_{n}^{j_n}
$$
also form a basis of the kernel. And the Theorem follows from the inductive assumption.

\end{proof}

\begin{definition}   Let $m,n$ be two nonnegative integers. Let also  $u_1,u_2,\dots$ and  $v_0,v_1,\dots$ be two infinite sets. We will assume that  $u_0=1$ and $u_i=0,\, v_i=0$ for  $i<0$. Set $w_i=u_i-v_{-i-m+n},\, i\in\Bbb Z$ and  denote by  $ U^+_{m,n}$  the ring generated by  $u_1,u_2,\dots,$ and  $\, v_0,v_1,\dots,$ with relations 
$$
R_{i_1,\dots,i_{m+1}}(w)= 0,\,\,\, \text{for any} \,\,\, (i_1,\dots,i_{m+1})\in\Bbb Z^{m+1}
$$

\end{definition}

Now we want to construct some set of linear generator of the algebra $ U_{m,n}$.

\begin{definition} Let   $I=(i_1,\dots,i_p)$ be a  sequence of integers and  $J=(j_1,\dots,j_q)$ be a sequence of nonnegative integers. Set
$$
R(I,J)= R_{I}(w) u_1^{j_1}u_2^{j_2}\dots u_q^{j_q}
$$
\end{definition}

\begin{thm}\label{right} Elements   $R(I,J)$, such that $(I,J)\in X^{+}(m,n)$ linearly generate the ring $U^+_{m,n}$.

\end{thm}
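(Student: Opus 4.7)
The plan is to extend the reduction strategy from the proof of Theorem \ref{relations1} to the larger ring $U^+_{m,n}$. Rearranging the defining identity $w_i = u_i - v_{-i-m+n}$, one has $v_{-i-m+n} = u_i - w_i$, so every element of $U^+_{m,n}$ can be rewritten as a polynomial in the generators $u_i$ ($i\ge 1$) together with the $w_j$ ($j\in\mathbb{Z}$). Note that $u_i = w_i$ whenever $i > n-m$, and in particular whenever $i > n$.

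We argue by induction on the total degree of monomials, aiming to show every monomial is a linear combination of $R(I,J) = R_I(w)\,u_1^{j_1}\cdots u_q^{j_q}$ with $(I,J)\in X^+(m,n)$. First, reduce the $w$-part: expanding the defining relation $R_{\tilde I}(w)=0$ (for $|\tilde I|=m+1$) along its last row gives a linear relation among products $R_{I'}(w)\cdot w_k$ with $|I'|=m$, and together with Lemma \ref{identity} this lets one re-express any such product as a linear combination of others with strictly smaller complexity, exactly as in the proof of Theorem \ref{relations1}. Iterating, every $w$-monomial (times fixed $u$-factors) becomes a linear combination of $R_I(w)\,u^J$ with $|I|=m$. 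Second, restrict the $u$-part: any $u_i$-factor with $i>n$ is replaced by $w_i$ and absorbed into the determinant by the first reduction. Padding $J$ with trailing zeros if necessary, each monomial becomes a linear combination of $R(I,J)$ with $|I|=m$ and $|J|=n$, which lies in $X^+(m,n)$ since $|I|-|J|=m-n$.

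The main obstacle is the bookkeeping in the induction and the base cases. In the proof of Theorem \ref{relations1} the base case used $w_0=1$ and $w_i=0$ for $-m<i<0$, valid in $U_{m,0}$, which collapsed the row-expansion to a single term. For general $(m,n)$ these simplifications may fail (e.g., $w_0=1-v_{n-m}$ when $n\ge m$, and the intermediate $w_i$ need not vanish), so the expansion produces many terms and each reduction must be iterated. However, each new term has controlled complexity, and one can organise the induction by a lexicographic ordering (say on the number and on the indices of the $w$-factors) so that the procedure terminates. The identity element $1$ is already in the spanning set: $R\bigl((0,-1,\dots,n-m+1),\emptyset\bigr)=1$ when $n<m$ (an upper-triangular determinant with ones on the diagonal), and $R\bigl(\emptyset,(0,\dots,0)\bigr)$ of length $n-m$ when $n\ge m$.
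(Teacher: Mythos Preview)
Your reduction ``every $w$-monomial (times fixed $u$-factors) becomes a linear combination of $R_I(w)\,u^J$ with $|I|=m$'' is false, not merely hard to bookkeep. Take $m=1$, $n=2$ and consider the element $u_1$. Under the isomorphism with $\Lambda^{+y}_{1,2}$ (Corollary~\ref{iso}) one has $R_I(w)\mapsto H_i$ for $|I|=1$, and by Lemma~\ref{alt1}(5) every $H_i$ equals $(1-y_1/x_1)(1-y_2/x_1)\,x_1^{i}$; hence any $\mathbb{Z}$-linear combination of $R_I(w)u^J$ with $|I|=1$ vanishes when $x_1=y_1$. But $u_1\mapsto h_1=x_1-y_1-y_2$ does not vanish at $x_1=y_1$, so $u_1$ cannot be written in that form. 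The correct element of the spanning set is $u_1=R(\emptyset,(1))$, which has $|I|=0$. More generally, the pairs $(I,J)\in X^+(m,n)$ with $|I|<m$ are essential and cannot be avoided; your Step~1 collapses the target to $|I|=m$ only, which is too small.

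The deeper reason the Theorem~\ref{relations1} mechanism does not transplant directly is that there the identity $1=R_{(0,-1,\dots,1-m)}(w)$ (relying on $w_0=1$, $w_{-1}=\dots=w_{1-m}=0$) gave a starting determinant of size $m$, after which multiplying by one $w_j$ at a time could be absorbed. In $U^+_{m,n}$ with $n\ge m$ no such size-$m$ expression for $1$ exists: the ideal generated by the $R_I(w)$ with $|I|=m$ is proper (it is exactly the kernel of the surjection $U^+_{m,n}\to U^+_{m-1,n-1}$). The paper's proof organizes the argument precisely around this: by induction on $mn$ via the quotient map $\psi_{m,n}\colon U^+_{m,n}\to U^+_{m-1,n-1}$, the terms with $|I|\le m-1$ come from the inductive hypothesis on the quotient, and only the kernel---already containing a factor $R_I(w)$ with $|I|=m$---needs the expansion/Lemma~\ref{identity} manipulation you describe. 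You have the right local tools (row expansion of $R_{\tilde I}(w)=0$, Lemma~\ref{identity}) but are missing this structural induction, and without it your iteration neither terminates nor reaches the full spanning set.
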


\begin{proof}

 We will use induction on $mn$. Let  $mn=0$. Then, either $m=0$, or $n=0$. In the first case  $\hat l(I)=0,  \hat l( J)=n$, Therefore the relations become  $R_i(w)=0,i\in \Bbb Z$ and  they are equivalent  to the relations $u_i=0, i>n$ and $ u_i-v_{n-i}=0,\,0\le i\le n$ and $v_j=0,\,j>n$. Therefore  $U^+(0,n)$ is generated algebraically  by $u_1,\dots,u_n$  and in this case Theorem is true.

In the second case  $\hat l(I)=m,\,\hat l(J)=0$ and we need to show that  $U^+(m,0)$  is a linear span  of the elements  $R_I(w),\,\hat l(I)=m$.  But this follows from the Theorem \ref{relations1}. 

Let us now suppose that $mn>0$. We have  $m-n=(m-1)-(n-1)$, therefore from the definition of the rings  $U^+_{m,n}$ it follows that there exists  a homomorphism 
$$
\psi_{m,n} : U^+_{n,m}\rightarrow U^+_{m-1,n-1}
$$
which sends generators to generators.  Again from the definition of the rings  $U^+_{n,m}$  it follows that the kernel of this homomorphism is the ideal generated by  $R_I(w)$ with $\hat l( I)=m$. Therefore it is enough to prove that for  $j\ne1,\dots, n$ the product  
$R_I(w)u_j$ can be expressed as a linear combination  some  of $R(\tilde I,\tilde J)$. And we need to prove also that   product  $R_Iv_j$ for any  $j$ is a linear combination  some  of  $R(\tilde I,\tilde J)$ as well.  

Let us consider the first case. We can assume that  $j\ge n$  and we will use induction on  $j-n$. If  $j=n$, then our statement is clear. Let  $j>n$. Consider  relation
$R_{i_1,\dots,i_m,j-m}(w)=0$. If we expand the determinant along the last row then we get
$$
R_I(w)w_j+\sum_{i\in\{j-m,\dots,j-1\}}(-1)^{i+1}R_{\tilde I}^{(i)}(w) w_{i}=0
$$
Since  $i\in\{j-m,\dots,j-1\}$ we have  $i\ge j-m >n-m$, so $w_i=u_i$ (it would be zero, if $i<0$) and by induction  $R_I(w)w_j$ is a linear combination some of $R(I,J)$.
Let us prove now that  $R_{I}v_j$ with $\hat l( I) =m$ is a linear combination some of  $R(\tilde I,\tilde J)$ for  $j\le 0$ using induction on $j$. If  $j=0$, then  $w_{n-m}=u_{n-m}-v_0$. Therefore we can replace  $v_0$ on $w_{n-m}$ and we can consider relation $R_{i_1,\dots,i_m,n-m}=0$.  Using the same arguments as before we get necessary statement for  $j=0$. If $j>0$, then   $w_{n-m-j}=u_{n-m-j}-v_j$ and we can replace  $v_j$ by  $w_{n-m-j}$ and we can consider a relation  $R_{i_1,\dots,i_m,n-m-j}=0$ and use inductive assumption.  Theorem is proved.

\end{proof}
\begin{corollary}\label{iso} Rings  $\Lambda^{+y}_{m,n}$ and $U^+_{m,n}$ are  isomorphic.
\end{corollary}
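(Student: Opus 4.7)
The plan is to exhibit a natural ring homomorphism $\varphi:U^+_{m,n}\to\Lambda^{+y}_{m,n}$ and show that it sends the spanning set of Theorem~\ref{right} bijectively onto the basis of Theorem~\ref{basis1}, so that it is forced to be an isomorphism.

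First I would define $\varphi$ on generators by
\[
\varphi(u_i)=h_i\quad (i\ge 1),\qquad \varphi(v_j)=(-1)^{n-m}\frac{y_1\cdots y_n}{x_1\cdots x_m}\,h^*_j\quad(j\ge 0),
\]
extended multiplicatively with $\varphi(u_0)=1$. A direct comparison with formula~(\ref{hbig}) shows that under the substitution $j=n-m-i$ we get $\varphi(v_{-i-m+n})=h_i^{(\infty)}$, so that $\varphi(w_i)=h_i-h_i^{(\infty)}=H_i$ for every $i\in\Bbb Z$. Therefore $\varphi(R_I(w))=R_I(H)=\det(H_{i_\alpha+\beta-1})$, and for $I\in\Bbb Z^{m+1}$ this is exactly the determinant that vanishes by part $6)$ of Lemma~\ref{alt1}. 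Thus all defining relations of $U^+_{m,n}$ are respected and $\varphi$ is a well-defined ring homomorphism.

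Next I would observe that, by construction, $\varphi(R(I,J))=H(I,J)$ for every pair $(I,J)$. By Theorem~\ref{basis1} the elements $H(I,J)$ with $(I,J)\in X^+(m,n)$ form a $\Bbb Z$-basis of $\Lambda^{+y}_{m,n}$; in particular they lie in the image, so $\varphi$ is surjective. On the other hand, Theorem~\ref{right} tells us that the elements $R(I,J)$ with $(I,J)\in X^+(m,n)$ linearly span $U^+_{m,n}$. Since their images under $\varphi$ are linearly independent, the $R(I,J)$ must themselves be linearly independent and hence form a basis of $U^+_{m,n}$; this forces $\varphi$ to be injective as well.

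There is essentially no hard step: the vanishing of the defining relations was arranged in advance (part $6)$ of Lemma~\ref{alt1}), and the two previous theorems already carry the content that the same index set $X^+(m,n)$ controls both rings. The only thing to be a bit careful about is the index bookkeeping $w_i = u_i - v_{-i-m+n}$ versus $H_i = h_i - h_i^{(\infty)}$ with $h_i^{(\infty)}=(-1)^{n-m}\frac{y_1\cdots y_n}{x_1\cdots x_m}h^*_{n-m-i}$, so that the definition of $\varphi$ on the $v_j$ matches cleanly; the rest is a pure dimension-counting (or rather, basis-versus-spanning-set) argument.
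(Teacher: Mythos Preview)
Your proposal is correct and follows essentially the same approach as the paper: define $\varphi$ on generators so that $\varphi(w_i)=H_i$, use part~$6)$ of Lemma~\ref{alt1} to see that the defining relations hold, and then combine Theorem~\ref{right} with Theorem~\ref{basis1} to conclude that a spanning set is sent to a basis. If anything, your write-up is slightly more explicit about the index bookkeeping and about why a map sending a spanning set to a basis must be an isomorphism.
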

\begin{proof} By Lemma   \ref{alt1}
$$
\det(H_{\lambda_i-i+j})_{1\le i,j\le m+1}=0
$$
 for any sequence  of integers  $\lambda_1,\dots,\lambda_{m+1}$.  Therefore from the defining relations of the ring $U^+_{m,n}$ it follows that there exists homomorphism such, that
$$
\varphi : U_{m,n}\longrightarrow \Lambda^{+y}_{m,n},\,\, \varphi(u_i)=h_i,\,\, \varphi(v_i)=(-1)^{n-m}\frac{y_1\dots y_n}{x_1\dots x_m}h^*_{i},\,\,i\ge 1
$$
This homomorphism sends the family of linear generators of the algebra  $U^+_{m,n}$ to a basis of the algebra  $\Lambda^{+y}_{m,n}$. Therefore it is an isomorphism.
\end{proof}

Let us consider the ring  $\Lambda_{m,n}$  of supersymmetric polynomials. It is a subring in $\Lambda_{m,n}^{+y}$. We also want to describe it in terms of generators and relations. Let us denote by $\Bbb Z_{>a}$ the set of integers which are strictly grater then $a$.

\begin{corollary}  $\Lambda_{m,n}$ is isomorphic to the ring  $U_{m,n}$ which is generated by $ u_1, u_2,\dots$ subject to relations 
$$
R_{I}(u)=0,\, \text{for any}\,\, I=(i_1,\dots,i_{m+1})\in(\Bbb Z_{>n-m})^{m+1}
 $$
and we assume that   $u_0=1$ и $u_i=0,\,i<0$. 
\end{corollary}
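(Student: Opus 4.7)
The plan is to reduce to Corollary \ref{iso} via the inclusion $\Lambda_{m,n} \hookrightarrow \Lambda_{m,n}^{+y}$. First I would define a ring homomorphism $\varphi : U_{m,n} \to \Lambda_{m,n}$ by $u_i \mapsto h_i$. To check it is well-defined, I must verify $R_I(h) = 0$ in $\Lambda_{m,n}$ for every $I \in (\Bbb Z_{>n-m})^{m+1}$. By the remark following the definition of $H_k$, the condition $k > n-m$ forces $H_k = h_k$, so every entry $h_{i_\alpha + \beta - 1}$ of the $(m+1) \times (m+1)$ determinant coincides with $H_{i_\alpha + \beta - 1}$, and the vanishing then follows from Lemma \ref{alt1}$(6)$.

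Next I would establish surjectivity. This amounts to the (classical) statement that $\Lambda_{m,n}$ is generated as a ring by the supersymmetric complete homogeneous polynomials $h_1, h_2, \dots$, which one can cite or extract from the super Jacobi--Trudi formula, expressing every supersymmetric Schur polynomial as a polynomial in the $h_k$.

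For injectivity, I would factor $\varphi$ as
\[
U_{m,n} \xrightarrow{\varphi} \Lambda_{m,n} \hookrightarrow \Lambda_{m,n}^{+y} \xrightarrow{\sim} U^+_{m,n},
\]
the last map being Corollary \ref{iso}. The composition $\bar\varphi$ sends each generator $u_i$ of $U_{m,n}$ to the generator $u_i$ of $U^+_{m,n}$; this is consistent because in $U^+_{m,n}$ one has $w_i = u_i$ for $i > n-m$ (as $v_{-i-m+n} = 0$ there), so the relations defining $U_{m,n}$ are contained in those defining $U^+_{m,n}$. It therefore suffices to prove $\bar\varphi$ injective.

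To this end, I would replay the spanning-set argument of Theorem \ref{right} inside $U_{m,n}$ to show that the ring is $\Bbb Z$-linearly generated by the elements $R(I,J) = R_I(u)\, u_1^{j_1} \cdots u_q^{j_q}$ with $(I,J) \in X^+(m,n)$ and $I \in (\Bbb Z_{>n-m})^{\mid I \mid}$. Under $\bar\varphi$ each such $R(I,J)$ is mapped to the basis element $H(I,J)$ of $\Lambda_{m,n}^{+y}$ from Theorem \ref{basis1}, and these images form a linearly independent subset, yielding injectivity. The main obstacle is precisely this combinatorial reduction: starting from a product $R_I(u)\, u_j$ with $j > n$, one expands the relation $R_{i_1,\dots,i_m, j-m}(u) = 0$ along its last row, and one must check that both the original indices $i_\alpha$ and the new index $j-m$ remain in $\Bbb Z_{>n-m}$, so that only the restricted family of $u$-relations available in $U_{m,n}$ (no $v$-relations) is invoked. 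Since $j > n$ gives $j - m > n - m$ and the $i_\alpha$ were already in $\Bbb Z_{>n-m}$, the reduction proceeds verbatim as in Theorem \ref{right}, and the corollary follows.
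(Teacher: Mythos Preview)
Your approach is correct and essentially identical to the paper's own proof: both identify $\Lambda_{m,n}$ with the subalgebra of $U^+_{m,n}\cong\Lambda^{+y}_{m,n}$ generated by the $u_i$'s, then show that the ``admissible'' elements $R(I,J)$ with $I\in(\Bbb Z_{>n-m})^{|I|}$ linearly span this subalgebra using only the admissible relations $R_I(u)=0$, by replaying the inductive reduction of Theorem \ref{right} and checking that the index shift $j\mapsto j-m$ keeps all indices above $n-m$. Your write-up is in fact a bit more explicit than the paper's (you spell out well-definedness via $H_k=h_k$ for $k>n-m$ and Lemma \ref{alt1}(6), and you state the factorisation $U_{m,n}\to\Lambda_{m,n}\hookrightarrow\Lambda^{+y}_{m,n}\cong U^+_{m,n}$), but the argument is the same.
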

\begin{proof} It is well  known that ring  $\Lambda_{m,n}$ is generated by $h_1,h_2,\dots$ ( see for example \cite{Ma}). Therefore according to the previous Theorem the ring $\Lambda_{m,n}$ is isomorphic  to the subring  $U$ in  $U_{m,n}$ generated by $u_1, u_2,\dots$. From the definition $w_i$ it follows that if $i>n-m$ then $w_i=u_i$.  Therefore elements  $R(I,J),\,l(I)=m,\, I\in(\Bbb Z_{>n-m})^{m}$ (we call  such elements admissible)  belong to  $U$. 

 Let us consider relation  $R_{I}(u)=0,\hat l( I)=m+1$ , where $I$ is admissible (we  call such relations also admissible).  Then by the same arguments as in the proof of the Theorem   \ref{right}, it can be shown that  admissible elements linearly generate  $U$ by using only admissible   relations. But according to the corollary \ref{iso}  admissible elements  $R(I,J)$ are linearly independent. This  proves corollary.
\end{proof}

\begin{corollary}\label{prod2} Let  $n\ge m$, then the ring    $\Lambda^{+y}_{m,n}$(as the graded one) is isomorphic to the ring $\Lambda^{+y}_{m,m}\otimes \Lambda_{n-m}$.
\end{corollary}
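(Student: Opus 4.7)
The plan is to pass through the presentations of Corollary \ref{iso}: $\Lambda^{+y}_{m,n}\cong U^+_{m,n}$, $\Lambda^{+y}_{m,m}\cong U^+_{m,m}$, and $\Lambda_{n-m}=\Lambda^{+y}_{0,n-m}\cong U^+_{0,n-m}$. From the base case $m=0$ in the proof of Theorem \ref{right}, the last of these rings is simply the polynomial ring $\Bbb Z[t_1,\ldots,t_{n-m}]$, so it suffices to construct a graded isomorphism
\[
\Phi:U^+_{m,m}\otimes \Bbb Z[t_1,\ldots,t_{n-m}]\;\xrightarrow{\sim}\;U^+_{m,n}.
\]

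The key observation is that in $U^+_{m,n}$ the substitution $w_i=u_i-v_{n-m-i}$ collapses for $i>n-m$ to $w_i=u_i$ (since $v_{n-m-i}=0$), while for $i\le n-m$ one can solve for $v_{n-m-i}$ in terms of $u_{n-m-i}$ and $w_i$ (using the conventions $u_0=1$ and $u_i=0$ for $i<0$). Thus on the underlying free polynomial algebra, the set $\{u_1,\dots,u_{n-m}\}\cup\{w_i:i\in\Bbb Z\}$ is obtained from $\{u_i:i\ge 1\}\cup\{v_j:j\ge 0\}$ by an invertible triangular linear change of variables, and the defining relations $R_I(w)=0$ involve only the $w_i$. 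Accordingly I would define $\Phi$ on generators by $\bar w_i\otimes 1\mapsto w_i$ for $i\in\Bbb Z$ and $1\otimes t_k\mapsto u_k$ for $1\le k\le n-m$, writing $\bar w_i$ for the analogous elements of $U^+_{m,m}$; this is well defined because $R_I(\bar w)=0$ in $U^+_{m,m}$ maps to $R_I(w)=0$ in $U^+_{m,n}$, while the polynomial factor is free.

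To construct the inverse $\Psi:U^+_{m,n}\to U^+_{m,m}\otimes \Bbb Z[t_1,\ldots,t_{n-m}]$, I would send $u_k\mapsto 1\otimes t_k$ for $1\le k\le n-m$, $u_i\mapsto \bar w_i\otimes 1$ for $i>n-m$, and $v_j\mapsto \Psi(u_{n-m-j})-\bar w_{n-m-j}\otimes 1$ for $j\ge 0$. The main bookkeeping step is a five-case check, according to whether $i>n-m$, $i=n-m$, $0<i<n-m$, $i=0$, or $i<0$, verifying that $\Psi(w_i)=\bar w_i\otimes 1$ uniformly; after this, $\Psi(R_I(w))=R_I(\bar w)\otimes 1=0$, so $\Psi$ is well defined. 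Then $\Phi\circ\Psi$ and $\Psi\circ\Phi$ are the identity by inspection on generators. The grading is preserved because $\deg w_i=\deg\bar w_i=i$ and $\deg t_k=\deg u_k=k$. The only real subtlety is the sign and convention bookkeeping at the boundary value $i=0$, where $u_0=1$ is used in both algebras; this is where the case analysis could in principle go wrong, but a direct computation shows everything cancels.
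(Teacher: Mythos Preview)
Your argument is correct and is actually cleaner than the paper's. The key difference is in how the isomorphism is built. The paper works on the $\Lambda$-side: it defines $\varphi:\Lambda^{+y}_{m,m}\to\Lambda^{+y}_{m,n}$ by the \emph{shifted} rule $h_i\mapsto h_{n-m+i}$ (so $H_i\mapsto H_{n-m+i}$), checks that this together with $\psi:\Lambda_{n-m}\to\Lambda^{+y}_{m,n}$, $h_i\mapsto h_i$, sends the basis $H(I,J)$ bijectively onto the basis of $\Lambda^{+y}_{m,n}$ via Theorem~\ref{basis1}, and then, because the shift destroys the grading, post-composes with the automorphism $\delta:H_i\mapsto H_{i-n+m}$ of $\Lambda^{+y}_{m,m}$ to restore it.

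Your approach instead exploits the presentation $U^+_{m,n}$ directly: after the triangular change of variables, the free algebra on $\{u_i\}_{i\ge 1}\cup\{v_j\}_{j\ge 0}$ becomes the free algebra on $\{u_1,\dots,u_{n-m}\}\cup\{w_i\}_{i\in\Bbb Z}$, and since the defining relations $R_I(w)=0$ involve only the $w_i$, the quotient splits as $\Bbb Z[u_1,\dots,u_{n-m}]\otimes\bigl(\Bbb Z[w_i:i\in\Bbb Z]/(R_I(w))\bigr)$. The second factor is exactly $U^+_{m,m}$ under $\bar w_i\leftrightarrow w_i$, and since $\deg w_i=\deg\bar w_i=i$ the grading is preserved without any correction. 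This is a more structural proof: it shows transparently \emph{why} the tensor decomposition holds, namely that the relations of $U^+_{m,n}$ are insensitive to $n$ once expressed in the $w$-variables. The paper's route, by contrast, buys a concrete description of where each basis element $H(I,J)$ lands, which is useful for the subsequent corollary on $\Lambda_{m,n}$ but is not needed here.
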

\begin{proof} 
Let us consider  a map $\varphi : \Lambda^+_{m,m}\longrightarrow \Lambda^+_{m,n},\,\,$   
$$ 
 \varphi(h_i)=h_{n-m+i},\,\,\,\varphi(\frac{y_1\dots y_m}{x_1\dots x_m}h_i^{*})=(-1)^{n-m}\frac{y_1\dots y_n}{x_1\dots x_m}h_i^*,\,i\ge1
$$  
It follows from the defining relations of the ring $\Lambda^+_{m,m}$ that the map $\varphi$ can be extended to the  homomorphism  of the rings
$
\varphi : \Lambda_{m,m}\longrightarrow \Lambda_{m,n}
$
It is easy to check that    $\varphi(H_i)=H_{n-m+i}$, and  $\varphi(H(I,J))=H(I+n-m,J+n-m)$, where $I+a$ means  the sequence $(i_1+a,\dots,i_p+a)$.  So the map $\varphi$  sends the basis of the ring  $\Lambda^+_{m,m}$  to  a subset of the basis of the ring $\Lambda_{m,n}$. And $\varphi$ is injective when restricted to the  basis. Therefore  $\varphi$ is injective as a homomorphism of the rings. 

Further there exists  a homomorphism  $\psi$ such that 
$$
\psi :\Lambda_{n-m}\longrightarrow \Lambda_{m,n},\,\, \psi(h_i)=h_i,\,\,i=1,\dots, n-m
$$
So we have a homomorphism  
$$
\varphi\otimes\psi  : \Lambda^{+y}_{m,m}\otimes \Lambda_{m,n} \longrightarrow  \Lambda_{m,n}
$$
and  it is easy to see that homomorphism $\varphi\otimes\psi$  sends  bijectively tensor product of the bases in rings  $\Lambda_{m,m}$  and  $\Lambda_{n-m}$ to the basis of the ring  $\Lambda_{m,n}$.  Therefore this is  an isomorphism. But it does not preserves the grading. In order to construct a homomorphism preserving the grading let us consider a composition  $\varphi\otimes\psi\circ \delta$, where $\delta :  \Lambda^{+y}_{m,m}\longrightarrow \Lambda^{+y}_{m,m}$ is an automorphism $\delta(H_i)=H_{i-n+m}.$

\end{proof}

\begin{corollary}\label{prod1} Let  $n\ge m$. Then there exists an isomorphism  of rings  $$\Lambda_{m,m}\otimes\Lambda_{n-m}=\Lambda_{m,n}$$.
\end{corollary}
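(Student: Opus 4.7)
The plan is to restrict the isomorphism built in the preceding corollary. That corollary produced $\varphi : \Lambda^{+y}_{m,m} \to \Lambda^{+y}_{m,n}$ with $\varphi(h_i) = h_{n-m+i}$, together with $\psi : \Lambda_{n-m} \to \Lambda^{+y}_{m,n}$ sending $h_i$ to $h_i$ for $1 \le i \le n-m$, and showed that the combined map $\varphi\otimes\psi : \Lambda^{+y}_{m,m}\otimes\Lambda_{n-m} \to \Lambda^{+y}_{m,n}$ is a ring isomorphism. My plan is to use exactly this pair of maps and verify that when we restrict the domain to $\Lambda_{m,m}\otimes\Lambda_{n-m}$, the image lies in the polynomial subring $\Lambda_{m,n}\subset\Lambda^{+y}_{m,n}$, and that the resulting map is still a bijection.

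First I would check that the restriction is well defined. The key input is the fact, recalled in the proof of the corollary identifying $\Lambda_{m,n}$ with $U_{m,n}$, that $\Lambda_{m,n}$ is generated as a ring by $h_1, h_2, \ldots$ Therefore $\varphi(\Lambda_{m,m})$, generated by $\varphi(h_i) = h_{n-m+i}$, is contained in $\Lambda_{m,n}$, and likewise $\psi(\Lambda_{n-m})$, generated by $h_1,\ldots,h_{n-m}$, is contained in $\Lambda_{m,n}$. Hence $\varphi\otimes\psi$ restricts to a ring homomorphism
\[
\Phi : \Lambda_{m,m}\otimes \Lambda_{n-m} \longrightarrow \Lambda_{m,n}.
\]

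Surjectivity of $\Phi$ is immediate from the generator description: the image contains $h_1,\ldots,h_{n-m}$ (from $\psi$) and $h_{n-m+1},h_{n-m+2},\ldots$ (from $\varphi$), and together these generate $\Lambda_{m,n}$. Injectivity is free, since $\Phi$ is the restriction of the injective map $\varphi\otimes\psi$ established in the previous corollary. I do not expect any real obstacle here; the only thing that requires a moment of care is confirming that $\Lambda_{m,n}$ is generated by the $h_i$ (so that the image of $\Phi$ genuinely lands in and fills out $\Lambda_{m,n}$), which is the same classical fact used a few lines earlier. As with the preceding corollary the grading is not preserved under this $\Phi$, but the statement only asserts a ring isomorphism, so no further modification is needed.
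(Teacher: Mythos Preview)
Your proposal is correct and follows exactly the paper's approach: the paper's proof is the single line ``consider the previous isomorphism $\varphi\otimes\psi$ and restrict it to the subring $\Lambda_{m,m}\otimes\Lambda_{n-m}$; it is clear that its image is $\Lambda_{m,n}$.'' You have simply unpacked the word ``clear'' by noting that the $h_i$ generate $\Lambda_{m,n}$, that the restricted map lands in and surjects onto this subring, and that injectivity is inherited.
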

\begin{proof}  Let us consider the previous isomorphism  $\varphi\otimes\psi$ and restrict it to the subring   $\Lambda_{m,m}\otimes\Lambda_{n-m}$. It is clear that its image is  $\Lambda_{m,n}$.
\end{proof}

\begin{remark} It it easy to see that the isomorphism from the previous corollary does not preserves the grading.
\end{remark}

\section{Laurent supersymmetric polynomials}
\begin{definition}  The following ring
$$
\Lambda^{\pm}_{m,n}=\{f\in \Bbb Z[x_1^{\pm1},\dots,x_m^{\pm1},y_1^{\pm1},\dots,y_n^{\pm1}]\mid x_i\frac{\partial f}{\partial x_i}+y_j\frac{\partial f}{\partial y_j}\in (x_i-y_j)\}
$$
 will be called the ring of Laurent supersymmetric polynomials.
\end{definition}
 We are going to describe it in term of generators and relations. First we are going to construct some natural basis in this ring.

\begin{definition}\label{ij} Let  $I=(i_1,\dots,i_p)$  be a sequence of integers and  $J=(j_1,\dots,j_q)$ be a sequence of integers such that the first $q-1$ elements are nonnegative and the last one is any integer. Let us set 
$$
H(I,J)= R_I(H) h_1^{j_1}h_2^{j_2}\dots h_{q-1}^{j_{q-1}}\Delta^{j_q}
$$
where  $\Delta=\frac{y_1\dots y_n}{x_1\dots x_m}$.
\end{definition}

Let us denote by $X^{\pm}(m,n)$ the set of pairs $(I,J)$ with the same properties as in Definition \ref{ij}, but sequence $I$ is strictly decreasing  and such that
$$
\hat l( I)\le m,\,\,\hat l(J)\le n,\, \hat l (I)-\hat l(J)=m-n
$$

\begin{thm} Let  $(I,J)\in X^{\pm}(m,n)$, then elements    $H(I,J)$  form a linear basis of  the ring $\Lambda^{\pm}_{m,n}$.
\end{thm}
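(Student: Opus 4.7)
The plan is to imitate the induction on $mn$ from Theorem \ref{basis1}, the only essential novelty being that the final exponent in $J$ is now permitted to be a negative integer via the unit $\Delta=y_1\cdots y_n/(x_1\cdots x_m)\in\Lambda^{\pm}_{m,n}$. For the base cases $mn=0$: if $n=0$ the claim reduces to Theorem \ref{basis} (via $R_I(H)=E_\lambda(x)$ with $I_i=\lambda_i-i+1$); if $m=0$ then $\Lambda^{\pm}_{0,n}=\Bbb Z[y_1^{\pm 1},\ldots,y_n^{\pm 1}]^{S_n}$ is freely generated as a $\Bbb Z$-algebra by $h_1(y),\ldots,h_{n-1}(y)$ together with the invertible element $\Delta=y_1\cdots y_n$, which gives the proposed basis immediately.

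For the inductive step I would introduce the homomorphism $\varphi_{m,n}:\Lambda^{\pm}_{m,n}\to\Lambda^{\pm}_{m-1,n-1}$ specialising $x_m=y_n=t$. The generating-function computation in Lemma \ref{alt1} yields $\varphi_{m,n}(H_k)=H_k$ and $\varphi_{m,n}(h_k)=h_k$, while $t$ cancels in numerator and denominator to give $\varphi_{m,n}(\Delta)=\Delta$. Hence $\varphi_{m,n}(H(I,J))=H(I,J)$ for every $(I,J)\in X^{\pm}(m,n)$, and by the inductive hypothesis $\varphi_{m,n}$ is surjective and the $H(I,J)$ with $|I|\leq m-1$ lift the basis of $\Lambda^{\pm}_{m-1,n-1}$. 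It thus suffices to prove that the $H(I,J)$ with $|I|=m$ and $|J|=n$ form a $\Bbb Z$-basis of $\ker\varphi_{m,n}$.

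To describe the kernel I would show $\ker\varphi_{m,n}=P\cdot(\Lambda^{\pm}_m\otimes\Lambda^{\pm}_n)$, where $P=\prod_{i,j}(x_i-y_j)$: any $f$ in the kernel vanishes on $x_m=y_n$ and hence, by $S_m\times S_n$-symmetry, on every diagonal $x_i=y_j$, so $f/P$ is a well-defined element of $\Lambda^{\pm}_m\otimes\Lambda^{\pm}_n$; conversely a direct check that $(x_k\partial_k+y_l\partial_l)(P)\in(x_k-y_l)$ shows $Pg\in\Lambda^{\pm}_{m,n}$ for every symmetric $g$. Using the obvious basis $E_\lambda(x)\,e_1(y)^{j_1}\cdots e_{n-1}(y)^{j_{n-1}}e_n(y)^{j_n}$ of $\Lambda^{\pm}_m\otimes\Lambda^{\pm}_n$ (with $\lambda$ a non-increasing length-$m$ integer sequence, $j_1,\ldots,j_{n-1}\ge 0$ and $j_n\in\Bbb Z$) and multiplying by $P$ yields a basis of $\ker\varphi_{m,n}$.

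Finally I would carry out the substitution $(e_1(y),\ldots,e_{n-1}(y),e_n(y)^{\pm 1})\leftrightarrow(h_1(x,y),\ldots,h_{n-1}(x,y),\Delta^{\pm 1})$ inside $\Lambda^{\pm}_m\otimes\Lambda^{\pm}_n$: the first $n-1$ relations are lower triangular with unit diagonal over the base ring $\Lambda^{\pm}_m$ by Lemma \ref{alt1}(1), exactly as in Theorem \ref{basis1}, and the last is just $e_n(y)=e_m(x)\Delta$, legitimate because $e_m(x)$ is already invertible in $\Lambda^{\pm}_m$. Applied to the kernel basis and combined with Lemma \ref{alt1}(5), which identifies $\prod_{i,j}(1-y_j/x_i)E_\lambda(x)$ with $R_I(H)$ for $I_i=\lambda_i-i+1$, this produces exactly $H(I,J)$ with $|I|=m$, $|J|=n$ and $(I,J)\in X^{\pm}(m,n)$. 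The hardest part will be the bookkeeping around the Laurent exponent $j_n$: one must absorb the factor $e_m(x)^{j_n}$ coming from $e_n(y)^{j_n}=e_m(x)^{j_n}\Delta^{j_n}$ into the Schur factor by shifting every part of $\lambda$ by $j_n$, an operation that preserves the parametrisation bijectively precisely because $e_m(x)$ is a unit in $\Lambda^{\pm}_m$. Once this is in place the induction closes as before.
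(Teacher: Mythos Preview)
Your proposal is correct and follows essentially the same route as the paper: induction on $mn$, the specialisation $\varphi_{m,n}$, identification of $\ker\varphi_{m,n}$ with $\prod_{i,j}(1-y_j/x_i)\cdot(\Lambda^{\pm}_m\otimes\Lambda^{\pm}_n)$, the triangular change of variables from $e_1(y),\dots,e_{n-1}(y),e_n(y)$ to $h_1(x,y),\dots,h_{n-1}(x,y),\Delta$, and finally Lemma~\ref{alt1}(5). The only cosmetic difference is the treatment of the last coordinate: the paper packages the substitution as an automorphism $\sigma$ of $\Lambda^{\pm}_m\otimes\Lambda^{\pm}_n$ with $\sigma(e_n(y))=\Delta$ (legitimate because the diagonal entry $e_m(x)^{-1}$ is a unit), whereas you rewrite $e_n(y)^{j_n}=e_m(x)^{j_n}\Delta^{j_n}$ and absorb the unit $e_m(x)^{j_n}$ into $E_\lambda(x)$ by shifting $\lambda$; both manoeuvres express the same bijection of bases.
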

\begin{proof}  Let us use induction on  $mn$. Let $mn=0$. If  $n=0$, then $ \hat l(I)=m$  and the statement follows from Theorem \ref{alt}.  If $m=0$,  the statement follows form the fact that the set of element of the type  $e_1^{j_1}\dots e_{n-1}^{j_{n-1}}e_n^{j_n}$ form a linear basis in the ring of Laurent symmetric polynomials $\Bbb Z[y^{\pm1}_1,\dots,y_n^{\pm1}]^{S_n}$ . 

Let  $mn>0$. Consider a natural homomorphism
$$
\varphi_{m,n} : \Lambda^{\pm}_{m,n}\longrightarrow \Lambda^{\pm}_{m-1,n-1}
$$
such that  $\varphi_{m,n}(x_m)=\varphi_{n,m}(y_n)=t,$  and it acts identically on all other variables. It is clear that  
$$
\varphi(H_i)=H_i,\,i\in \Bbb Z, \quad\varphi(h_j)=h_j,\quad\varphi(h^*_j)=h^*_j,\,j\in \Bbb Z_{\ge 0}\quad \varphi(\Delta)=\Delta
$$
From the inductive assumption it follows that this homomorphism is surjection and it is enough to prove that the kernel of this homomorphism has  a basis consisting of  $H(I,J)$ such that   $\hat l( I)=m,\,\hat l(J)=n$. 
It is easy to check that that the following family of elements  forms a basis of the kernel
$$
\prod_{i=1}^m\prod_{j=1}^n\left(1-\frac{y_j}{x_i}\right)E_{\lambda}(x)e_1(y)^{j_1}\dots e_n(y)^{j_n}
$$

where $\lambda_1\ge\lambda_2\ge\dots\ge\lambda_m$ is a non-increasing sequence of integers and  $j_1,\dots,j_{n-1}$ is   any sequence of nonnegative integers, $j_n$ is any integer and   $e_1,\dots,e_n$ elementary symmetric polynomials.  Further we have 
$$
h_{1}(x,y)=h_{1}(x)-e_{1}(y)
$$
$$
h_{2}(x,y)=h_{2}(x)-h_1(x)e_1(y)+e_2(y)
$$
$$
\vdots\quad\quad\quad\quad\quad\vdots\quad\quad\quad\quad\quad\vdots\quad\quad\quad\quad\quad\vdots
$$
$$
 h_{n-1}(x,y)=h_{n-1}(x)-h_{n-2}(x)e_1(y)+\dots+(-1)^ne_{n-1}(y)
$$
$$
\Delta=\frac{e_n(y)}{e_m(x)}
$$
 So we see that   $h_{1}(x,y),\,h_{2}(x,y),\dots,h_{n-1}(x,y),\Delta$ can be expressed by low-triangular matrix in terms of   $e_1(y),\,e_2(y),\dots,e_{n}(y)$ with units and $e_{m}(x)^{-1}$ (up to sign) on the main diagonal. Therefore there exists an automorphism   $\sigma$ of the ring  
 $$
 \Bbb Z[x_1^{\pm1},\dots,x_m^{\pm1},y^{\pm 1}_1,\dots,y^{\pm 1}_n] ^{S_m\times S_m}$$ such that 
$$ 
  \varphi(\sigma_i(y))=h_{i}(x,y),\,i=1,\dots,n-1,\, \varphi(\sigma_{n}(y))=\Delta,
  $$ 
  $$ \varphi (\sigma_i(x))=\sigma_i(x),\, i=1,\dots,m.
 $$
 Therefore  by Lemma \ref{alt1} we see that
$$
H_{\lambda_{1},\dots,\lambda_{m}}h_{1}^{j_{1}}h_{2}^{j_2}\dots h_{n-1}^{j_{n-1}}\Delta^{j_n}
$$
also form a basis of the kernel. And the Theorem follows from the inductive assumption.

\end{proof}

\begin{definition}  Let  $m,n$ be two nonnegative integers.  Let also  $u_1,u_2,\dots,$ $\,\,v_1,v_2, \dots$ be two infinite sets of variables and  $t$ is an additional variable. We assume that $u_0=1,\,v_0=1$ and $u_i=0,\, v_i=0$ for  $i<0$. Set  $w_i=u_i-tv_{-i-m+n},\, i\in\Bbb Z$ and denote by  $ U^{\pm}_{m,n}$   the ring generated by  $u_1,u_2,\dots,$ $\, v_1,v_2, \dots,\,t,$ and relations
$$
R_{i_1,\dots,i_{m+1}}(w)=0,\,\, \text{for any}\,\, (i_1,\dots,i_{m+1})\in \Bbb Z^{m+1}
$$

\end{definition}

We want to show that  $\Lambda^{\pm}_{m,n}$ is isomorphic to  $ U^{\pm}_{m,n}$.
Let us prove first that  $t$ is invertible in the ring $ U^{\pm}_{m,n}$.

\begin{lemma}  Element  $t$  is invertible   in the ring $ U^{\pm}_{m,n}$. 
\end{lemma}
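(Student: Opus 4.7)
The plan is to adapt the argument used to prove that $\hat v_0$ is invertible in the proof of Theorem \ref{relations1}(2), applied to the particular defining relation $R_I(w)=0$ with $I=(0,-1,-2,\dots,-m)$, which indeed lies in $\mathbb Z^{m+1}$ and is therefore among the defining relations of $U^{\pm}_{m,n}$.

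First I would unpack the matrix defining $R_I(w)$. With $i_\alpha=1-\alpha$ the entry at position $(\alpha,\beta)$ is $w_{\beta-\alpha}$; so the main diagonal contains $w_0$, the strictly upper-triangular positions contain $w_j$ with $j>0$, and the strictly lower-triangular positions contain $w_j$ with $j<0$. In particular the bottom-left corner is $w_{-m}$ and the top-right corner is $w_m$.

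The key observation is what happens modulo the ideal generated by $t$. Since $w_i=u_i-tv_{n-m-i}$, we have $w_i\equiv u_i\pmod t$. Combined with $u_0=1$ and $u_i=0$ for $i<0$, the matrix becomes upper-triangular with $1$'s on the diagonal modulo $t$, so
\[
R_{(0,-1,\dots,-m)}(w)\equiv 1\pmod t .
\]
Hence there exists an element $Q\in U^{\pm}_{m,n}$ with $R_{(0,-1,\dots,-m)}(w)=1+tQ$. The defining relation $R_{(0,-1,\dots,-m)}(w)=0$ then gives $t\cdot(-Q)=1$, which shows that $t$ is invertible (with explicit inverse $-Q$).

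There is really no obstacle here beyond bookkeeping: one only has to verify that the tuple $(0,-1,\dots,-m)$ has length $m+1$ (so the relation is among the defining ones) and that the reduction modulo $t$ kills every entry below the diagonal while turning the diagonal into $1$'s. The sanity check $m=n=1$ gives $R_{(0,-1)}(w)=(1-t)^2+tu_1v_1$, from which $t(2-t-u_1v_1)=1$, in agreement with the general argument.
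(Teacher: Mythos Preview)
Your proof is correct and follows essentially the same approach as the paper: you use the defining relation $R_{(0,-1,\dots,-m)}(w)=0$, observe that setting $t=0$ turns the matrix into an upper-triangular matrix with $1$'s on the diagonal so the determinant has constant term $1$, and conclude that $t$ has an explicit inverse. The paper phrases this as ``$t$ satisfies an algebraic equation with constant term $1$,'' but the content is identical.
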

\begin{proof}
Let us consider the subring $\frak A\subset  U^{\pm}_{m,n}$  generated by    $u_i,\,v_i,i\ge1$.  Let us prove that element  $t$ satisfies algebraic equation with coefficients in  $\frak A$  and  that the constant term is equal to $1$. 

For that consider a relation
$$
 \left|\begin{array}{cccc}
   w_{0}& w_{1}& \ldots & w_{m}\\
\vdots&\vdots&\ddots&\vdots\\
  w_{-m}& w_{-m+1}& \ldots & w_{0}\\
  \end{array}\right|=0
$$
The left hand side of this relation is a polynomial in $t$.  If we substitute  in this relation  $t=0$ we see that the constant term is $1$. Therefore the equation has a form 
$$
a_kt^k+a_{k-1}t^{k-1}+\dots +a_1t+1=0
$$
or
$$
t(a_kt^{k-1}+a_{k-1}t^{k-2}+\dots +a_1)+1=0
$$
therefore $t$ is invertible.
\end{proof}

Now we want to construct some set of linear generators of the ring  $ U^{\pm}_{m,n}$.
\begin{definition} Let  $I=(i_1,\dots,i_p), J=(j_1,\dots,j_q)$ such that $(I,J)\in X^{\pm}(m,n)$. Set 
$$
R(I,J)= R_{I}(w) u_1^{j_1}u_2^{j_2}\dots u_{q-1}^{j_{q-1}}t^{j_q}
$$
\end{definition}

\begin{thm}\label{right}  Let $(I,J)\in X^{\pm}(m,n)$  then elements   $R(I,J)$ linearly generate the ring  $U^{\pm}_{m,n}$.

\end{thm}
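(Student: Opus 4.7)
The plan is to mimic the proof of the previous Theorem~\ref{right} (for $U^+_{m,n}$), adapting it to the presence of the invertible extra generator $t$. Induction is on the product $mn$.

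For the base case $mn=0$, if $n=0$ the defining relations of $U^{\pm}_{m,0}$ coincide with those of the ring denoted $U^{\pm}_{m,0}$ in Theorem~\ref{relations1}(2), so the conclusion follows directly from that theorem together with the fact that the only admissible sequences have $|I|=m$ and $J=(j_1)$ with $j_1\in\mathbb{Z}$, matching the factor $t^{j_1}$. If $m=0$, the relations $R_i(w)=0$ for $i\in\mathbb{Z}$ collapse to $u_i=tv_{n-i}$ for $0\le i\le n$, $u_i=0$ for $i>n$, and $v_j=0$ for $j>n$, so $U^{\pm}_{0,n}$ is generated by $u_1,\ldots,u_{n-1}$ and the invertible element $t=u_n v_n^{-1}\cdot(\text{unit})$ (using the lemma just proved); the admissible $R(I,J)=u_1^{j_1}\cdots u_{n-1}^{j_{n-1}}t^{j_n}$ then linearly generate, matching the Laurent polynomial ring basis.

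For the inductive step $mn>0$, I would introduce the natural homomorphism
\[
\psi_{m,n}:U^{\pm}_{m,n}\longrightarrow U^{\pm}_{m-1,n-1}
\]
sending each generator to its namesake. Since $m-n=(m-1)-(n-1)$, this is well defined, and exactly as in the proof of Theorem~\ref{right}, the kernel is the ideal of $U^{\pm}_{m,n}$ generated by $R_I(w)$ with $|I|=m$. By induction, the elements $R(I,J)$ with $(I,J)\in X^{\pm}(m-1,n-1)$ linearly generate $U^{\pm}_{m-1,n-1}$, so it suffices to show that every product $R_I(w)\cdot g$, with $|I|=m$ strictly decreasing and $g$ a generator of the ring, can be rewritten as a linear combination of admissible $R(I',J')$ with $|I'|=m$.

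The arguments for multiplication by $u_j$ with $j\ge n+1$ and by $v_j$ with $j\le 0$ are literally those of the $U^+_{m,n}$ proof: one applies the relation $R_{i_1,\ldots,i_m,\,j-m}(w)=0$ (respectively $R_{i_1,\ldots,i_m,\,n-m-j}(w)=0$), expands the determinant along the last row, and uses induction on $j-n$ (respectively on $-j$) together with Lemma~\ref{identity} to convert the resulting determinants back into standard $R(I',J')$ form. The genuinely new feature is the presence of $t^{\pm 1}$: multiplication by $t$ simply shifts $j_q\mapsto j_q+1$, which stays admissible because the last entry of $J$ in $X^{\pm}(m,n)$ is allowed to be any integer, and multiplication by $t^{-1}$ (which exists by the preceding lemma) does the analogous shift $j_q\mapsto j_q-1$. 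The identity $w_{n-m-j}=u_{n-m-j}-t v_j$ lets one replace any $v_j$ by $t^{-1}(u_{n-m-j}-w_{n-m-j})$, absorbing the $t^{-1}$ into the last component of $J$.

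The main obstacle is precisely bookkeeping the $t$-exponent: one must check that every rewriting step preserves the invariant $|I|-|J|=m-n$, so that admissibility is not broken. This is the reason $X^{\pm}(m,n)$ is defined with $j_q$ free to be any integer while $j_1,\ldots,j_{q-1}\ge 0$; the shifts produced by multiplication or division by $t$ land in the last coordinate only, and every relation-driven substitution either leaves $|I|$ unchanged or introduces a factor of $t^{\pm 1}$ that again lands in $j_q$. Once this invariant is verified step by step (as in the proof of Theorem~\ref{right}), the induction closes and the admissible $R(I,J)$ linearly generate $U^{\pm}_{m,n}$.
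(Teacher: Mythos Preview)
Your overall strategy matches the paper's: induction on $mn$, the surjection $\psi_{m,n}:U^{\pm}_{m,n}\to U^{\pm}_{m-1,n-1}$ with kernel the ideal generated by the $R_I(w)$ with $|I|=m$, and reduction of products $R_I(w)\cdot(\text{generator})$ via the $(m{+}1)\times(m{+}1)$ relations together with Lemma~\ref{identity}. However, several index ranges are off and should be fixed. In the $n=0$ base case, $|J|\le n=0$ forces $J=\emptyset$, not $J=(j_1)$; the spanning is by the $R_I(w)$ alone, with no $t$-factor, exactly as established in Theorem~\ref{relations1}. In the inductive step, the admissible $R(I,J)$ with $|J|=n$ involve only $u_1,\ldots,u_{n-1}$ (the last slot is $t^{j_n}$), so you must handle $u_j$ for $j\ge n$, not just $j\ge n+1$; the boundary case $j=n$ is precisely where $w_{n-m}=u_{n-m}-t$ injects a stray factor of $t$, which the paper then absorbs into $j_q$. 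Finally, the $v_j$ to be treated are those with $j\ge 1$ (the actual generators), not ``$j\le 0$'' (where $v_0=1$ and $v_j=0$); the substitution $tv_j=u_{n-m-j}-w_{n-m-j}$ is the correct one, and as you note it costs a factor $t^{-1}$ that lands in $j_q$. With these corrections your argument coincides with the paper's proof.
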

\begin{proof}
Since  determinant changes its sign after transposition of two rows we can suppose that members of the sequence  $I$ does not necessary strictly decrease. We will use induction on  $mn$. Let  $mn=0$. Then either   $m=0$, or $n=0$. In the first case  $\hat l( I)=0, \hat l(J)=n$. Therefore the relations  come to the form $w_i=0,i\in \Bbb Z$  and they are equivalent to the relations $u_i=0,\,v_i=0$, if  $i>n$ and  $1-tv_n=0,\,u_n-t=0$,\, $u_i-tv_{n-i}=0,\,1\le i\le n-1$. Therefore  the ring  $U^{\pm}(0,n)$ is generated algebraically  by  $u_1,\dots,u_{n-1}, t,t^{-1}$  and Theorem is true in this case.

In the second case $\hat l(I)=m, \,\hat l( J)=0$ and we need to show that  $U^{\pm}(m,0)$ is a linear span of the elements  $R_I,\,\hat l( I)=m$. But this follows from the Theorem \ref{relations1}. 

Suppose now that  $mn>0$.  We have  $m-n=(m-1)-(n-1)$,  therefore from the defining relations of the ring $U^{\pm}_{m,n}$ it follows that there exists a homomorphism 
$$
\psi : U^{\pm}_{n,m}\rightarrow U^{\pm}_{m-1,n-1}
$$
which sends generators to generators. Again from the definition of the algebra $U^{\pm}_{n,m}$ it follows that  the kernel of this homomorphism is the ideal generated by  $R_{I}(w),\,l( I)=m$.  Therefore in order to prove the Theorem it is enough to prove  that product $R_I(w)u_j$ for 
 $j\ne1,\dots, n-1$ can be expressed as a linear combination some of $R(\tilde I,\tilde J),\, (\tilde I,\tilde J)\in X^{\pm}(m,n)$. And we also need to prove the same for  $R_I(w)v_j$, for any $j$.
 
 Let us consider the first case.   We will prove this statement induction on $j-n+1$. If  $j=n-1$, then it is clear. Let  $j>n-1$. Consider relation
$R{i_1,\dots,i_m,j-m}(w)=0$.  If we expand the determinant along its last row we get 
$$
R_I(w)w_j+\sum_{i\in\{j-m,\dots,j-1\}}(-1)^{i+1}R_{\tilde I}^{(i)}(w) w_{i}=0
$$
Since for   $i\in\{j-m,\dots,j-1\}$ we have  $i\ge j-m \ge n-m$, then $w_i=u_i,\,u_i-t$ \,(the last case is possible if  $j=n$) and by induction and Lemma  \ref{identity}  $R_I(w)w_j$ is a linear combination some of  $R(I,J)$.

Let us prove the same statement for product  $R_I(w)v_j$ and  $j> 0$ also by induction. If  $j=1$, then  $w_{n-m-1}=u_{n-m-1}-tv_1$ therefore we can replace  $v_1$ by $w_{n-m-1}$. Then we can consider relation  $R_{i_1,\dots,i_m,n-m-1}(w)=0$. Using the same argument as before we get the necessary statement  for  $j=1$. Let $j>1$, then $w_{n-m-j}=u_{n-m-j}-tv_j$ and we can replace   $v_j$ на $w_{n-m-j}$ and can consider relation  $R_{i_1,\dots,i_m,n-m-j}(w)=0$. Using this relation and inductive assumption we prove the Theorem.
\end{proof}
\begin{corollary} Rings  $\Lambda^{\pm}_{m,n}$ and  $U^{\pm}_{m,n}$ are isomorphic.
\end{corollary}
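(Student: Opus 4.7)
The plan is to mimic the proof of Corollary \ref{iso}, adapting it to the Laurent setting. I would construct an explicit homomorphism $\varphi: U^{\pm}_{m,n} \longrightarrow \Lambda^{\pm}_{m,n}$ by setting
$$
\varphi(u_i) = h_i,\quad \varphi(v_i) = h^*_i,\quad \varphi(t) = (-1)^{n-m}\Delta = (-1)^{n-m}\frac{y_1\cdots y_n}{x_1\cdots x_m}.
$$
A direct computation using formula \eqref{hbig} shows that then $\varphi(w_i) = h_i - (-1)^{n-m}\Delta\, h^*_{n-m-i} = H_i$ for all $i \in \Bbb Z$. Part $6)$ of Lemma \ref{alt1} then guarantees that the defining relations $R_{i_1,\dots,i_{m+1}}(w) = 0$ are respected, so $\varphi$ is well-defined as a ring homomorphism.

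Next I would verify that $\varphi$ is surjective. Since $\Delta$ is invertible in $\Lambda^{\pm}_{m,n}$ (its inverse being $\frac{x_1\cdots x_m}{y_1\cdots y_n}$), the image contains all $h_i$, all $h^*_j$ and $\Delta^{\pm 1}$; by Corollary \ref{generators} applied in the supersymmetric Laurent setting together with the basis theorem just proved, these generate $\Lambda^{\pm}_{m,n}$. Alternatively, and more efficiently, one observes directly that $\varphi$ sends the linear generating family $R(I,J) = R_I(w)\,u_1^{j_1}\cdots u_{q-1}^{j_{q-1}}t^{j_q}$ from Theorem \ref{right} to
$$
\varphi(R(I,J)) = R_I(H)\,h_1^{j_1}\cdots h_{q-1}^{j_{q-1}}\bigl((-1)^{n-m}\Delta\bigr)^{j_q} = (-1)^{(n-m)j_q} H(I,J),
$$
so $\varphi$ takes the spanning set indexed by $X^{\pm}(m,n)$ onto (a sign-renormalization of) the basis $\{H(I,J)\}_{(I,J)\in X^{\pm}(m,n)}$ given by the preceding theorem.

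For injectivity, note that Theorem \ref{right} gives that the elements $R(I,J)$ with $(I,J)\in X^{\pm}(m,n)$ linearly span $U^{\pm}_{m,n}$. Their images form (up to signs) the basis of $\Lambda^{\pm}_{m,n}$. Hence these elements must already be linearly independent in $U^{\pm}_{m,n}$, and $\varphi$ is a bijection on this spanning set, i.e.\ an isomorphism.

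The one point requiring mild care — and the main obstacle — is matching the indexing between the two sides: the sequence $I$ in $H(I,J)$ from the basis theorem is strictly decreasing, while in Theorem \ref{right} arbitrary sequences were allowed (a transposition of rows only changes the sign of $R_I(w)$). Thus the spanning set may be reduced, with appropriate signs, to the one indexed by $X^{\pm}(m,n)$; once this bookkeeping is settled, the bijection of spanning set with basis immediately forces both linear independence in $U^{\pm}_{m,n}$ and the fact that $\varphi$ is an isomorphism of rings.
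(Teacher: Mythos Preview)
Your proposal is correct and follows essentially the same approach as the paper: define $\varphi$ by $u_i\mapsto h_i$, $v_i\mapsto h_i^*$, $t\mapsto(-1)^{n-m}\Delta$, invoke Lemma~\ref{alt1}(6) to see the relations are preserved, and then observe that $\varphi$ carries the spanning family $R(I,J)$ of Theorem~\ref{right} onto (up to signs) the basis $H(I,J)$ of $\Lambda^{\pm}_{m,n}$, forcing $\varphi$ to be an isomorphism. Your write-up is in fact more careful than the paper's own brief argument, particularly in handling the sign $(-1)^{(n-m)j_q}$ and the reduction from arbitrary to strictly decreasing $I$.
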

\begin{proof} Let us consider a map $\varphi$
$$
\varphi(u_i)=h_i,\,\, \varphi(v_i)=h^*_{i},\,\,i\ge 1,\, \varphi(t)= (-1)^{n-m}\frac{y_1\dots y_n}{x_1\dots x_m}
$$
By Lemma   \ref{alt1}
$$
\det(H_{\lambda_i-i+j})_{1\le i,j\le m+1}=0
$$
 Therefore the map $\varphi$ can be extended to  the homomorphism 
 $$
\varphi : U_{m,n}\longrightarrow \Lambda^{+y}_{m,n}.
$$
It is easy to check that homomorphism $\varphi$ sends the family of linear generators of the ring $U_{m,n}$  to the basis of the ring  $\Lambda^{+y}_{m,n}$. Therefore this is an isomorphism.

\end{proof}
\vskip0.3cm

\section{Jacobi-Trudy formulae and Euler supercharacters}
It is well known that  Schur polynomials is a natural basis of the ring $\Lambda_{m}$ and  super Schur  polynomials is a natural basis of the ring $\Lambda_{m,n}$. In the case of the rings $\Lambda_{m,n}^{+y}$ and $\Lambda_{m,n}^{\pm}$  there is a natural basis consisting of the supercharacters of irreducible finite dimensional modules as well. But until now a closed explicit formula for them is not known. It is possible to try to use super-analogues  of composite Schur  functions (see \cite{MV}), but they do not generate the whole algebra  $\Lambda_{m,n}^{\pm}$ in general. So we  use Euler supercharacters instead.  There is an explicit formula for them according to Serganova \cite {GS} and they linearly generate the algebra $\Lambda_{m,n}^{\pm}$. Of course there are many families  of Euler supercharacters which form a basis in the algebra $\Lambda_{m,n}^{\pm}$. We chose those which are closely related to the Kac modules and they are a natural generalisation of super Schur polynomials from the Jacobi-Trudy formula point of view. 

First, we prove some technical lemmas.
Let   $a_i,\,b_i,i\ge0$ be two sequences of elements from a commutative algebra  $\frak A$  such that $a_0=b_0=1$. Consider  two formal series 
$
f(t)=\sum_{i\ge0}a_it^i,\quad   g(t)=\sum_{i\ge0}b_it^i\quad $
and suppose that  $f(t)g(t)=1$.

\begin{remark}
   For a partition $\lambda$ we will denote as usual by $l(\lambda)$ the length of $\lambda$ and by $\mid\lambda\mid$ the number $\lambda_1+\lambda_2+\dots $. We can also consider a partition $\lambda$ as a sequence  of  nonnegative integers $\lambda=(\lambda_1,\lambda_2,\dots,)$. In this case the number $\hat l(\lambda)$ is also defined. But this number is always grater or equal to $\l(\lambda)$ and it depends on how many zeros we put at the end of the $\lambda$.  For  example, let $\lambda=(3,3,2,2,1,0,0,0,0)$. Then $l(\lambda)=5$ but $\hat l(\lambda)=9.$
\end{remark}

\begin{lemma}\label{conj1}  Let  $\lambda$  be a partition such that  $l(\lambda)\le p,\,l(\lambda')\le r.$  Then
$$
\det(a_{\lambda_i-i+j})_{1\le i,j\le p}=(-1)^{|\lambda|}\det(b_{\lambda'_i-i+j})_{1\le i,j\le r}
$$
 \end{lemma}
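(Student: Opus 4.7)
My plan is to derive the identity from Jacobi's complementary-minor formula, applied to infinite lower-unitriangular matrices built from the two sequences.

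The hypothesis $f(t)g(t)=1$ is equivalent to the convolution relation $\sum_{k=0}^{n}a_kb_{n-k}=\delta_{n,0}$ for every $n\ge 0$. Fix $N\ge p+r$ and define the $N\times N$ lower-unitriangular matrices $\mathcal{A}$ and $\mathcal{B}$ by $\mathcal{A}_{ij}=a_{j-i}$ and $\mathcal{B}_{ij}=b_{j-i}$, with the convention $a_k=b_k=0$ for $k<0$. The convolution relation just stated is exactly the matrix identity $\mathcal{A}\mathcal{B}=I_N$, so $\det\mathcal{A}=\det\mathcal{B}=1$ and $\mathcal{B}=\mathcal{A}^{-1}$.

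Next I would realize both determinants of the lemma as specific minors. Since $l(\lambda)\le p$ and $l(\lambda')\le r$, the partition $\lambda$ fits in a $p\times r$ rectangle, so the sequences $\alpha_i=\lambda_i+p-i$ and $\beta_j=\lambda'_j+r-j$ are both strictly decreasing in $\{0,\dots,p+r-1\}$, and they satisfy the classical complementarity: $\{\alpha_i\}_{i=1}^{p}$ and $\{p+r-1-\beta_j\}_{j=1}^{r}$ partition $\{0,1,\dots,p+r-1\}$. With $\{\alpha_i\}$ (shifted into $\{1,\dots,N\}$) taken as row indices of $\mathcal{A}$ and the contiguous block $\{N-p+1,\dots,N\}$ taken as columns, the resulting $p\times p$ minor is precisely $\det(a_{\lambda_i-i+j})_{1\le i,j\le p}$. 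By the complementarity above, the complementary $r\times r$ minor of $\mathcal{B}$ reproduces $\det(b_{\lambda'_i-i+j})_{1\le i,j\le r}$ after reversing the order of rows and columns.

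Finally, Jacobi's complementary-minor identity, applied to $\mathcal{A}$ with $\det\mathcal{A}=1$, gives
\[
\det(\mathcal{A}|_{I,J})=(-1)^{\Sigma I+\Sigma J}\det(\mathcal{B}|_{J^c,I^c}),
\]
where $\Sigma I=\sum_{i\in I}i$. The main obstacle in this plan is the sign bookkeeping: I need to combine the Jacobi sign $(-1)^{\Sigma I+\Sigma J}$ with the signs of the row- and column-reversal permutations from the previous step. Working modulo $2$, these three contributions collapse, via the standard bijection between the cells of $\lambda$ and the inversions of the interleaved boundary sequence attached to $(\alpha,\beta)$, to exactly the factor $(-1)^{|\lambda|}$ claimed in the lemma.
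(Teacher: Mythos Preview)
Your approach is essentially the same as the paper's: both build mutually inverse unitriangular Toeplitz matrices out of the sequences $(a_i)$ and $(b_i)$, recognise the two Jacobi--Trudi determinants as a minor and its complementary minor, and then invoke Jacobi's complementary-minor identity (the paper cites Macdonald and Gantmacher for this). One small slip: with your convention $\mathcal{A}_{ij}=a_{j-i}$ the matrices are \emph{upper}-unitriangular, not lower; and, as you yourself note, the sign bookkeeping at the end is only sketched, whereas the paper simply records the Jacobi sign $(-1)^{|I|+|J|}$ without tracking the details either.
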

 \begin{proof} See \cite{Ma}. For reader convenience   we reproduce the proof here. Let  us define two matrices $A=(a_{i-j})$ and $B=(b_{i-j})$ (we assume that $a_i=b_i=0$ for $i<0$).  Then the previous condition means that  $AB=1$ and  $\det A=\det B=1$. By the formula for minors   of mutually inverse matrices (see \cite {Gant}) we have 
 $$
 A(I,J)=(-1)^{|I|+|J|}B(\bar J,\Bar I),\quad I,J\subset\{1,\dots,N\},
 $$
 and  $\bar I,\bar J$  are complements to $I,\, J$. Set $J=\{1,\dots,p\},\,\,\,I=\{\lambda_p+1,\lambda_{p-1}+2,\dots,\lambda_1+p\}$. Then $\bar J=\{p+1,\dots,p+r\}$ and according to  \cite{Ma} $\bar I=\{p+j-\lambda'_j\}$. Therefore 
 $$
 A(I,J)=\det(a_{ij})_{i\in I,\,j\in J}= 
 \det(a_{p-i+1,p-j+1})=\det(a_{\lambda_i-i+j})
 $$
 $$
 B(\bar J\bar I)=\det (b_{ji})_{j\in \bar J,i\in\bar I}=\det(b_{p+j-p-i+\lambda'_i})=\det(b_{\lambda'_i-i+j})
 $$
 \end{proof}
We also need a dual form of composite symmetric polynomials.  
\begin{lemma}\label{dual} Let  $\nu$ and  $\mu$ be partitions such $l(\mu)+l(\nu)\le m$. Then the following  equality is valid in the ring $\Lambda_m^{\pm}$

$$
\left|\begin{array}{cccc}
    h^*_{\nu_{q}}&   h^*_{\nu_{q}-1}& \ldots &   h^*_{\nu_{q}-q-p+1}\\
\vdots&\vdots&\ddots&\vdots\\
   h^*_{\nu_{1}+q-1}&  h^*_{\nu_{1}+q-2}& \ldots &  h^*_{\nu_{1}-p}\\
   h_{\mu_{1}-q}& h_{\mu_{1}-q+1}& \ldots & h_{\mu_{1}+p-1}\\
\vdots&\vdots&\ddots&\vdots\\
  h_{\mu_{p}-q-p+1}& h_{\mu_{p}-q-p+2}& \ldots & h_{\mu_{p}}\\
\end{array}\right|=
$$
$$
\left|\begin{array}{cccc}
   e^*_{\nu'_{s}}& e^*_{\nu'_{s}-1}& \ldots & e^*_{\nu'_{s}-s-r+1}\\
\vdots&\vdots&\ddots&\vdots\\
   e^*_{\nu'_{1}+s-1}& e^*_{\nu'_{1}+s-2}& \ldots & e^*_{\nu'_{1}-r}\\
   e_{\mu'_{1}-s}& e_{\mu'_{1}-s+1}& \ldots & e_{\mu'_{1}+r-1}\\
\vdots&\vdots&\ddots&\vdots\\
  e_{\mu'_{r}-s-r+1}& e_{\mu'_{r}-s-r+2}& \ldots & e_{\mu'_{r}}\\
\end{array}\right|
$$

\end{lemma}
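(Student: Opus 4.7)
The plan is to show that both sides equal the same Euler character $E_\lambda$ for the composite partition
\[
\lambda = (\mu_1, \ldots, \mu_p, 0, \ldots, 0, -\nu_q, \ldots, -\nu_1)
\]
embedded in length $m$ by padding with zeros, which is permitted by the hypothesis $l(\mu) + l(\nu) \le m$. The equality of the LHS with $E_\lambda$ is immediate from Lemma \ref{Com} applied with $r = p$ and $s = q$, so the entire content of the lemma reduces to establishing that the RHS is another Jacobi--Trudi-type expression for the same $E_\lambda$, now in terms of $e_k, e^*_k$ and the conjugate partitions $\mu', \nu'$.

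The key input is Lemma \ref{conj1}: in $\Lambda^{\pm}_m$ the power series $\sum_k h_k t^k$ and $\sum_k (-1)^k e_k t^k$ are mutually inverse, and so are $\sum_k h^*_k t^k$ and $\sum_k (-1)^k e^*_k t^k$. Once the alternating signs are absorbed by pulling $(-1)^k$ out of rows and columns, Lemma \ref{conj1} supplies the usual dual Jacobi--Trudi identities swapping $h \leftrightarrow e$ (resp.\ $h^* \leftrightarrow e^*$), together with conjugation of the indexing partition and change of matrix size $p \leftrightarrow r$ (resp.\ $q \leftrightarrow s$).

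I would then derive the RHS by mirroring the proof of Lemma \ref{Com}, but starting from the $m\times m$ Jacobi--Trudi expression $E_\lambda = \det(H_{\lambda_i - i + j})$ of Lemma \ref{alt} and converting each block to its $e$-dual. Separating the entries of the form $h_k$ (positive part) from those of the form $h^{(\infty)}_k = (-1)^m (x_1 \cdots x_m)^{-1} h^*_{-m-k}$ (negative part), one applies the two dual Jacobi--Trudi identities to the respective blocks, reorders rows, and absorbs the $(x_1\cdots x_m)^{\pm}$-factors by the shift identity $(x_1 \cdots x_m)\, E_{(\lambda_1, \ldots, \lambda_m)} = E_{(\lambda_1 + 1, \ldots, \lambda_m + 1)}$ used in Lemma \ref{Com}. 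The result should be precisely the RHS.

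The main obstacle is that the LHS composite determinant is a single determinant whose columns span across both the $h^*$-block and the $h$-block, so one cannot apply Lemma \ref{conj1} blockwise as two independent Jacobi--Trudi identities. The cleanest way around this is to realize the LHS as a single minor of a bi-infinite Toeplitz-like matrix (whose ``positive half'' carries $h_k$'s and ``negative half'' carries $h^*_k$'s) and invoke the complementary-minor cofactor identity underlying Lemma \ref{conj1} in that enlarged setting; alternatively, one may retrace the derivation of Lemma \ref{Com} step by step with $e$'s replacing $h$'s throughout. Either route requires careful bookkeeping of the signs coming from absorbing $(-1)^k$'s, of the simultaneous conjugation $(p,q,\mu,\nu) \to (r,s,\mu',\nu')$, and of the row reorderings between the top and bottom blocks.
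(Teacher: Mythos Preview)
Your overall strategy---show that both determinants equal the same Euler character $E_\chi$ via Lemma~\ref{Com}---is exactly the paper's, and your identification of the LHS with $E_\chi$ is immediate. You also correctly pinpoint the real difficulty: the composite determinant does not factor into two independent blocks, so one cannot simply apply Lemma~\ref{conj1} separately to the $h^*$-rows and the $h$-rows.

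Where you diverge from the paper is in how to resolve that obstacle. The paper does \emph{not} pass to a bi-infinite Toeplitz matrix or attempt a blockwise argument. Instead it exploits a very concrete identity in $\Lambda_m^{\pm}$: since $e_i^*(x)=e_i(x_1^{-1},\dots,x_m^{-1})$, one has $e^*_i = e^*_m\, e_{m-i}$ for every $i$. Applying this to each entry of the $e^*$-block in the RHS pulls out a common factor $(e^*_m)^s$ and leaves a determinant whose entries are \emph{all} ordinary $e_k$'s, indexed by the single partition $(m-\nu'_s,\dots,m-\nu'_1,\mu'_1,\dots,\mu'_r)$; the hypothesis $l(\mu)+l(\nu)\le m$ guarantees $m-\nu'_1\ge \mu'_1$, so this really is a partition. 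Now Lemma~\ref{conj1} applies once, globally, converting the whole thing to a single $h$-determinant for the conjugate partition, which one checks is $(\mu_1+\nu_1,\dots,\mu_p+\nu_1,\nu_1-\nu_q,\dots,\nu_1-\nu_2,0)$. After absorbing the prefactor $(e^*_m)^s=(x_1\cdots x_m)^{-s}$ via the shift identity for $E_\lambda$, this is recognized as $E_\chi$ by Lemma~\ref{Com}, matching the LHS.

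So the missing idea in your proposal is precisely the relation $e^*_i = e^*_m e_{m-i}$, which dissolves the two-block structure and reduces everything to a single application of the classical dual Jacobi--Trudi identity. Your bi-infinite Toeplitz route might be made to work, but it is considerably heavier than what is needed here.
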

\begin{proof} For any  $i$ we have  $e^*_i=e^*_me_{m-i}$. Therefore the determinant  on the right hand side takes a form 
$$
(e^*_m)^s\left|\begin{array}{cccc}
   e_{m-\nu'_{s}}& e_{m-\nu'_{s}+1}& \ldots & e_{m-\nu'_{s}+s+r-1}\\
\vdots&\vdots&\ddots&\vdots\\
   e_{m-\nu'_{1}-s+1}& e_{m-\nu'_{1}-s+2}& \ldots & e_{m-\nu'_{1}+r}\\
   e_{\mu'_{1}-s}& e_{\mu'_{1}-s+1}& \ldots & e_{\mu'_{1}+r-1}\\
\vdots&\vdots&\ddots&\vdots\\
  e_{\mu'_{r}-s-r+1}& e_{\mu'_{r}-s-r+2}& \ldots & e_{\mu'_{r}}\\
\end{array}\right|
$$
According to our assumptions  $m-\nu'_1\ge\mu'_1$, so by Lemma  \ref{conj1}  the previous determinant is equal to
$$
(e^*_m)^s\left|\begin{array}{cccc}
    h_{\lambda_1}&   h_{\lambda_1+1}& \ldots &   h_{\lambda_1+q+p-1}\\
\vdots&\vdots&\ddots&\vdots\\
  h_{\lambda_{p+q}-q-p+1}& h_{\mu_{p+q}-q-p+2}& \ldots & h_{\lambda_{p+q}}\\
\end{array}\right|
$$
where  $\lambda$ is the partition conjugated  to partition $(m-\nu'_s,\dots,m-\nu'_1,\mu'_1,\dots,\mu'_r)$. 
It is easy to check that  $\lambda=(\mu_1+\nu_1,\dots,\mu_p+\nu_1,\nu_1-\nu_q,\dots,\nu_1-\nu_2,0)$.
Therefore by Lemma  \ref{Com}  the right hand side of the equality which we are proving is equal to  $E_{\chi}$, where $\chi=(\mu_1,\dots,\mu_p,0,\dots,0,-\nu_s,\dots,-\nu_1)$. The left hand side is also  equal  to $E_{\chi}$ by the same Lemma.
\end{proof}

 We actually need a generalisation of the Lemma \ref{conj1}. There should be a direct proof the Lemma below. But we will use the previous Lemma instead. 
 Let    $a_i,\,a^*_{i},\,b_i,\,b^*_i$  be four  sequences of elements from a commutative algebra  $\frak A$  such that $a_0=a_0^*=b_0=b^*_0=1$ and $a_i,=a^*_{i}=b_i=b^*_i=0$ for $i<0$. Consider  four formal series 
$$
f(t)=\sum_{i\ge0}a_it^i,\quad  f^*(t)=\sum_{i\le0} a^*_it^i,\quad g(t)=\sum_{i\ge0}b_it^i,\quad  g^*(t)=\sum_{i\le0} b^*_it^i
$$
and suppose  that $\,\,f(t)g(t)=1,\,\, f^*(t) g^*(t)=1$.

\begin{thm} \label{conj3} Let $\nu,\mu$  are partitions such that   $$l(\nu)=q,\,l(\mu)=p, \, l(\nu')=r,\,l(\mu')=s.$$  Then
$$
\left|\begin{array}{cccc}
    a^*_{\nu_{q}}&   a^*_{\nu_{q}-1}& \ldots &   a^*_{\nu_{q}-q-p+1}\\
\vdots&\vdots&\ddots&\vdots\\
    a^*_{\nu_{1}+q-1}&   a^*_{\nu_{1}+q-2}& \ldots &   a^*_{\nu_{1}-p}\\
   a_{\mu_{1}-q}& a_{\mu_{1}-q+1}& \ldots & a_{\mu_{1}+p-1}\\
\vdots&\vdots&\ddots&\vdots\\
  a_{\mu_{p}-q-p+1}& a_{\mu_{p}-q-p+2}& \ldots & a_{\mu_{p}}\\
\end{array}\right|
$$

$$
=(-1)^{|\nu|+|\mu|}\left|\begin{array}{cccc}
   b^*_{\nu'_{s}}&  b^*_{\nu'_{s}-1}& \ldots &  b^*_{\nu'_{s}-s-r+1}\\
\vdots&\vdots&\ddots&\vdots\\
   b^*_{\nu'_{1}+s-1}& \tilde b^*_{\nu'_{1}+s-2}& \ldots &  b^*_{\nu'_{1}-r}\\
   b_{\mu'_{1}-s}& b_{\mu'_{1}-s+1}& \ldots & b_{\mu'_{1}+r-1}\\
\vdots&\vdots&\ddots&\vdots\\
  b_{\mu'_{r}-s-r+1}& b_{\mu'_{r}-s-r+2}& \ldots & b_{\mu'_{r}}\\
\end{array}\right|
$$

\end{thm}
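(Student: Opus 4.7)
The identity is purely universal: both sides live in the polynomial ring
$\frak A_0 = \Bbb Z[a_1, a_2, \ldots\,;\,a_1^*, a_2^*, \ldots]$, where the $b_i$ and $b_i^*$ are viewed as specific polynomials in the $a$'s, respectively $a^*$'s, determined by the inverse relations $f(t)g(t) = 1$ and $f^*(t)g^*(t) = 1$. My plan is to deduce it from Lemma \ref{dual} by specialization.

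Pick any integer $m \ge p+q$ and consider the ring homomorphism
\[
\varphi : \frak A_0 \longrightarrow \Lambda_m^{\pm}, \qquad \varphi(a_i) = h_i, \quad \varphi(a_i^*) = h_i^*.
\]
Since $H(t) E(-t) = 1$ and $H^*(t) E^*(-t) = 1$ in $\Lambda_m^{\pm}$, the defining inverse relations force $\varphi(b_i) = (-1)^i e_i$ and $\varphi(b_i^*) = (-1)^i e_i^*$. The left-hand side of Theorem \ref{conj3} is then mapped verbatim to the left-hand side of Lemma \ref{dual}. For the right-hand side, I would pull the sign $(-1)^k$ out of each entry $b^*_k \mapsto (-1)^k e^*_k$ and $b_k \mapsto (-1)^k e_k$; factoring these as row signs times column signs and summing the exponents carefully, using $l(\nu')=s$, $l(\mu')=r$, $|\nu'| = |\nu|$, $|\mu'| = |\mu|$, and the parity $s(s+1) \equiv 0$, one finds that the total accumulated sign equals $(-1)^{|\nu|+|\mu|}$. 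This exactly cancels the prefactor in Theorem \ref{conj3} and reproduces the right-hand side of Lemma \ref{dual}. Invoking Lemma \ref{dual} therefore yields $\varphi(\mathrm{LHS}) = \varphi(\mathrm{RHS})$.

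The main obstacle is then promoting this numerical identity in $\Lambda_m^{\pm}$ back to a polynomial identity in $\frak A_0$. Both sides involve only the $a_i, a_i^*$ with $i$ below some bound $N$ depending on $\nu$ and $\mu$, so it suffices to show that for $m \ge 2N$ the restriction of $\varphi$ to $\Bbb Z[a_i, a_i^* : 1 \le i \le N]$ is injective; equivalently, that $h_1, \ldots, h_N, h_1^*, \ldots, h_N^*$ are algebraically independent in $\Lambda_m^{\pm}$ over $\Bbb Q$. I would verify this by a Jacobian / Zariski density argument: the map $(x_1, \ldots, x_m) \mapsto (h_i(x), h_i^*(x))_{1 \le i \le N}$ from $(\Bbb Q^\times)^m$ to $\Bbb A^{2N}$ has generically full-rank differential once $m \ge 2N$ and therefore Zariski dense image, precluding any nonzero polynomial relation. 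With this injectivity in hand, the equality $\varphi(\mathrm{LHS}) = \varphi(\mathrm{RHS})$ lifts to the desired polynomial identity in $\frak A_0$, proving Theorem \ref{conj3}.
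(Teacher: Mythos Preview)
Your approach is correct and rests on the same two ingredients as the paper's proof: Lemma \ref{dual} and the algebraic independence of finitely many $h_i,h_i^*$ (equivalently $e_i,e_i^*$) in $\Lambda_m^{\pm}$ for $m$ large. The sign computation you outline does indeed collapse to $(-1)^{|\nu|+|\mu|}$, and the Jacobian argument for independence is sound (for $m>2N$ one may also argue more directly via $e_i^*=e_m^{-1}e_{m-i}$, so that the relevant $e_i,e_i^*$ involve disjoint blocks of the free generators $e_1,\dots,e_m$ together with $e_m^{-1}$).

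The one point worth comparing is the \emph{direction} of the specialization. You map the universal ring $\frak A_0$ \emph{into} $\Lambda_m^{\pm}$, verify the identity downstairs, and then must invoke injectivity to lift it back. The paper runs the homomorphism the other way: it takes the subring $\frak B\subset\Lambda_m^{\pm}$ generated by the relevant $e_i,e_i^*$, observes (for $m$ large) that this subring is free polynomial on those generators, and defines $\varphi:\frak B\to\frak A$ by $e_i\mapsto(-1)^ib_i$, $e_i^*\mapsto(-1)^ib_i^*$; the inverse relations then force $\varphi(h_i)=a_i$, $\varphi(h_i^*)=a_i^*$, and applying $\varphi$ to Lemma \ref{dual} yields the theorem in $\frak A$ directly. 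This buys two small economies: no intermediate universal ring $\frak A_0$ is needed, and one only has to check that a homomorphism \emph{exists} (freeness of the source) rather than that a homomorphism is \emph{injective}. Substantively, though, the two arguments are dual formulations of the same idea.
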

\begin{proof} Let us fix partitions  $\mu$ and $\nu$.  Then choose a natural number $m$  such that  the following conditions are fulfilled  

$1)$ $p+q\le m$

$2)$ any index in the left hand side  of the determinant and any index in the right hand side of the determinant  is not grater then $m$. Equivalent conditions  are $s+q-1\le m$, $r+p-1\le m$.

$3)$ Elements $e^*_i,\, 1\le i\le q+s-1, \, e_i,\, 1\le i\le p+r-1$  algebraically independent. 

Let now  $\frak B\subset \Lambda_{m}^{\pm}$ be the subring generated by  $e^*_i,\, 1\le i\le q+s-1, \, e_i,\, 1\le i\le p+r-1$.  For any natural  $k$ we have 
\begin{equation}\label{elsym}
h_k=\left|\begin{array}{cccc}
    e_{1}&   \ldots &  e_{k-1}&  e_{k}\\
\vdots&\vdots&\ddots&\vdots\\
  0& \ldots &1&  e_{1}\\
\end{array}\right|,\quad h^*_k=\left|\begin{array}{cccc}
    e^*_{1}& 1&  \ldots &   0\\
\vdots&\vdots&\ddots&\vdots\\
  e^*_{k}& e^*_{k-1}&\ldots &  e^*_{1}\\
\end{array}\right|
\end{equation}
therefore the elements of the   determinant   on the left hand side  in the Lemma \ref{dual} belong to  $\frak B$. Consider a homomorphism $\varphi : \frak B\longrightarrow\frak A$ such that
$$
\,\, \varphi(e_i)=(-1)^ib_i,\,1\le i\le q+s-1\,\,\, \varphi(e^*_i)=(-1)^ib^*_i,\, 1\le i\le p+r-1
$$
 according to our assumptions for  $a_i,a^*_i, b_j,b^*_j$ the conditions (\ref{elsym}) are satisfied. Therefore  $\varphi(h_i)=a_i,\, \varphi(h^*_i)=a^*_i$.  If we apply homomorphism  $\varphi$  to the both sides of the equality from Lemma  \ref{dual}  we get the Theorem.
\end{proof}

Now we are ready   to construct  a basis in the ring $\Lambda^{\pm}_{m,n}$. 
\begin{definition} Let 
$(\lambda,\,\mu)$ be two sequences of  non-increasing  integers. Let us write the sequence $\mu$ in the form $\mu=(\tau_1,\dots,\tau_r,0,\dots,0-\nu_s,\dots,-\nu_1)$, where   $\nu,\,\tau$ are partitions. 
   Let $\sigma$ be the sequence defined by the rule 
$$
(\sigma_1,\dots,\sigma_{l+p+k})=(\nu'_1,\dots,\nu'_l,\lambda_1,\dots,\lambda_p,\tau'_1,\dots,\tau'_k)
$$
where  $p=\hat l(\lambda)$ and $'$ means the conjugate partition.
Let us define  an element $K_{\lambda,\mu}$ of the ring  $\Lambda^{\pm}_{m,n}$ by the  formula 
$$
K_{\lambda,\mu}=\det(a_{ij}),\,\,\text{where}\,\,
a_{ij}=\begin{cases}h^*_{\sigma_{i}+i-j}, \,\, 1\le i\le l\\
 H_{\sigma_i-i+j},\, l<i\le l+p\\ 
 h_{\sigma_{i}-i+j},\, l+p< i\le l+p+k\end{cases},
$$
and in all cases $1\le j\le l+p+k$
\end{definition}

Let us denote by $P(n,m)$ the set of pairs of sequences of non-increasing  integers  $(\lambda,\mu)$  and such that 
$$
\hat l(\lambda)\le m,\,\hat l(\mu) \le n,\,\, \hat l(\lambda)-\hat l(\mu)=m-n
$$
 
\begin{thm}  Let  $(\lambda,\mu)\in P(m,n)$ then set of all $
K_{\lambda,\mu}\,
$
form a basis in the ring  $\Lambda^{\pm}_{m,n}$.
\end{thm}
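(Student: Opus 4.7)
The plan is to proceed by induction on $mn$, following the template used in Theorem \ref{basis1} and in the preceding basis theorem for $\Lambda^{\pm}_{m,n}$. For the base case $mn=0$: when $n=0$, Lemma \ref{induc} gives $H_k=h_k$ for $k\ge 0$ and $H_k=-h^{(\infty)}_k$ for $k\le -m$, so after multiplying suitable rows by $\Delta$ the middle $H$-block of $K_{\lambda,\mu}$ becomes expressible purely in $h$- and $h^*$-entries, and the whole matrix coincides with the composite Schur expression of Lemma \ref{Com}, which is a basis of $\Lambda^{\pm}_m$ by Theorem \ref{basis}. The case $m=0$ is symmetric, the determinant collapsing to a product of elementary symmetric polynomials in $y^{\pm1}$.

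For the inductive step I would apply the homomorphism $\varphi_{m,n}\colon\Lambda^{\pm}_{m,n}\to \Lambda^{\pm}_{m-1,n-1}$ sending $x_m\mapsto t$, $y_n\mapsto t$, which fixes each of $h_k$, $h^*_k$, $H_k$ and $\Delta$. It sends $K_{\lambda,\mu}$ indexed by data for $(m,n)$ to the corresponding $K_{\lambda,\mu}$ for $(m-1,n-1)$. By the inductive hypothesis the images with $\mid\lambda\mid<m$ and $\mid\mu\mid<n$ form a basis of $\Lambda^{\pm}_{m-1,n-1}$, while those with $\mid\lambda\mid=m$ or $\mid\mu\mid=n$ lie in $\ker\varphi_{m,n}$. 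It thus suffices to show that these latter $K_{\lambda,\mu}$ form a basis of the kernel.

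For the kernel analysis I would use, as in the proof of the preceding theorem, that $\ker\varphi_{m,n}$ has basis $\prod_{i,j}(1-y_j/x_i)\,E_{\lambda}(x)\,e_1(y)^{j_1}\cdots e_n(y)^{j_n}$ with $\lambda$ of length $m$, $j_1,\dots,j_{n-1}\ge 0$ and $j_n\in\Bbb Z$. By Lemma \ref{alt1}(5), the middle $H$-block of $K_{\lambda,\mu}$ with $p=\mid\lambda\mid=m$ produces exactly the factor $\prod(1-y_j/x_i)\cdot s_{\lambda}(x)$. Theorem \ref{conj3}, together with Lemma \ref{dual}, then provides a triangular identification between the outer mixed $h^*/h$-blocks (indexed by the conjugate partitions $\tau'$ and $\nu'$) and products of elementary symmetric functions in $y^{\pm1}$, matching the claimed kernel basis and hence transferring through $\varphi_{m,n}$ in the inductive conclusion.

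The main obstacle will be making this last triangularity precise. After substituting $h^*_k=(-1)^{n-m}\Delta^{-1}(h_{n-m-k}-H_{n-m-k})$ into the top block and expanding $K_{\lambda,\mu}$ by multilinearity, one must verify that a single distinguished term of the expansion reproduces the corresponding element of the kernel basis (up to sign and a power of $\Delta$) while all remaining terms are strictly smaller in an appropriate dominance order on the indexing set $X(m,n)$. Establishing this leading-term analysis uniformly in $(\lambda,\mu)$, tracking the conjugate-partition bookkeeping forced by the $h^*$-block, is where the technical weight of the proof lies; once done, the bijectivity of the indexing and the triangularity immediately yield both linear independence and spanning.
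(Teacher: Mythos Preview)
Your inductive skeleton matches the paper's: induction on $mn$, and for $mn>0$ the homomorphism $\varphi_{m,n}$ reducing to the kernel question for pairs with $\mid\lambda\mid=m$, $\mid\mu\mid=n$. The divergence is in how that kernel step is handled, and here the paper's route is both different and considerably simpler than your proposed triangularity argument.

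The paper does \emph{not} attempt to expand $h^*_k$ in terms of $H$'s and track a leading term. Instead it proves the exact identity
\[
K_{\lambda,\mu}\;=\;(-1)^{|\tau|+|\nu|}\prod_{i=1}^m\prod_{j=1}^n\Bigl(1-\frac{y_j}{x_i}\Bigr)\,s_{\lambda}(x)\,s_{\mu}(y)
\]
for $\mid\lambda\mid=m$, $\mid\mu\mid=n$, i.e.\ $K_{\lambda,\mu}$ is literally (up to sign) a Kac module character. The argument is an $m$-step column-operation reduction on the alternation expression $K_{\lambda,\mu}\Delta_m(x)\Delta_n(y)=\{K_{\lambda,\mu}\,x^{\rho_m}y^{\rho_n}\}$: at each step one multiplies column $j$ by $x_1$ and subtracts from column $j+1$, uses the recurrences $H_k-x_1H_{k-1}=H_k(x_2,\dots)$, $h_k-x_1h_{k-1}=h_k(x_2,\dots)$, $h^*_{k-1}-x_1h^*_k=-x_1h^*_k(x_2,\dots)$, and expands along the first $H$-row. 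This peels off $x_1^{\lambda_1}\prod_j(1-y_j/x_1)$ and reduces $K_{\lambda,\mu}(x,y)$ to $K_{\lambda^{(1)},\mu}(x^{(1)},y)$ with one fewer $x$-variable and one fewer $H$-row. After $m$ iterations one lands on $K_{\emptyset,\mu}(\emptyset,y)$, which is handled by the $m=0$ base case (via Theorem~\ref{conj3} it equals $(-1)^{|\tau|+|\nu|}E_\mu(y)$). Since the Kac characters are manifestly a basis of $\ker\varphi_{m,n}$, the theorem follows with no dominance bookkeeping at all.

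Two further remarks on your outline. First, your $n=0$ base case is over-engineered: when $n=0$ one has $\mu=\emptyset$, so there is no $h$- or $h^*$-block and $K_{\lambda,\emptyset}=\det(H_{\lambda_i-i+j})=E_\lambda(x)$ directly by Theorem~\ref{alt}. Second, your appeal to Lemma~\ref{alt1}(5) for the ``middle $H$-block'' is not immediately valid: that lemma evaluates an $m\times m$ determinant of $H$'s, but in $K_{\lambda,\mu}$ the $H$-rows sit inside an $(l+m+k)\times(l+m+k)$ determinant and do not factor off as a block. The paper's column-reduction trick is precisely what extracts the $s_\lambda(x)$ factor cleanly from this larger determinant.
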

\begin{proof} We will use induction on $mn$. If  $mn=0$, then either  $m=0$, or $n=0$. If   $m=0$, then $\hat l(\lambda)=0,\,\hat l(\mu)= n,\,h_i(\emptyset, y)=(-1)^ie_i(y), \, h^*_i=(-1)^ie^*_i,\,1\le i\le n$. From the Theorem \ref{conj3} and  Lemma  \ref{Com} it follows that 
$$
K_{\emptyset,\mu}(y)=(-1)^{|\nu|+|\tau|}E_{\mu}(y).
$$

  This polynomials form a basis in $\Lambda^{\pm}_n$ again by Lemma \ref{Com}.
  If  $n=0$, then $\mu=\emptyset,\, \hat l(\lambda)=m$  and the statement follows from the Theorem  \ref{alt}.

 Let now $nm>0$. Consider homomorphism
 $$
 \varphi_{m,n} : \Lambda^{\pm}_{m,n}\longrightarrow \Lambda^{\pm}_{m-1,n-1},\quad \varphi(x_m)=\varphi(y_n)
 $$
 and on the other variables it acts identically.
 By inductive assumption polynomials   $\varphi(K_{\lambda,\mu})$ such that 
 $(\lambda,\mu)\in X(m-1,n-1)$ form a basis in  $\Lambda^{\pm}_{m-1,n-1}$. Therefore in order to prove the Theorem we need to show that, polynomials  $K_{\lambda,\mu}$ such that $\hat l(\lambda)=m,\, \hat l(\mu)=n$ form a basis of the kernel of the homomorphism $\varphi_{m,n}$.
 Actually it is enough to prove that  
 \begin{equation}\label{prod}
 K_{\lambda,\mu}=(-1)^{|\tau|+|\nu|}\prod_{j=1}^n\prod_{i=1}^m\left(1-\frac{y_{j}}{x_{i}}\right)E_{\lambda}(x_{1},\dots, x_{m})E_{\mu}(y_1\dots,y_n),\,\,\, 
\end{equation}
This means that  $K_{\lambda,\mu}$ are supercharacters of Kac modules up to a sign.

We have 
$$
K_{\lambda,\mu}\Delta_m(x)\Delta_n(y)=\left\{K_{\lambda,\mu}(x, y)x^{\rho_m}y^{\rho_n}\right\}
$$
where  as before $\{f(x,y)\}$ means  alternation on the group $S_m\times S_n$.
  Applying  alternation operation to the  row number  $l+1$ we come to  equality
$$
K_{\lambda,\mu}\Delta(x)\Delta(y)=\left\{\prod_{j=1}^n\left(1-\frac{y_{j}}{x_{1}}\right)\tilde K_{\lambda,\mu}(x, y)x^{\rho_m}y^{\rho_n}\right\}
$$
where the determinant  $\tilde K_{\lambda,\mu}(x, y)$ differs from the  $K_{\lambda,\mu}$ only in the row number  $l+1$  which is  
$$
(x_1^{\lambda_1-l+1}, x_1^{\lambda_1-l+2},\dots,x_1^{\lambda_1+p+k})
$$
Now let us multiply  every column   (starting from the first one ) of the determinant  $\tilde K_{\lambda,\mu}(x, y)$ by $x_1$ and subtract  it from the following column. Then using the equalities 
$$
h^*_{i-1}-x_1h^*_{i}=-x_1h^*_{i}(x_2,\dots,x_m,y)
$$
$$
h_{i}-x_1h_{i-1}=h_{i}(x_2,\dots,x_m,y)
$$

 and expanding determinant along its row number  $l+1$  we come to the equality 
 
 \begin{equation}\label{ind21}
 K_{\lambda,\mu}\Delta_m(x)\Delta_n(y)=\left\{\prod_{j=1}^n\left(1-\frac{y_{j}}{x_{1}}\right)x_{1}^{\lambda_{1}}K_{\lambda^{(1)},\mu}(x^{(1)}, y)x^{\rho_m}y^{\rho_n}\right\}
\end{equation}
where
$$
K_{\lambda^{(1)},\mu}( x^{(1)}, y)=K_{\lambda_{2},\dots,\lambda_{m},\mu}( x_{2},\dots x_{m}, y_{1},\dots, y_{n})
$$
 If we apply previous arguments   $m$  times then we come to the equality
$$
K_{\lambda,\mu}\Delta_m(x)\Delta_n(y)=\left\{\prod_{j=1}^n\prod_{i=1}^m\left(1-\frac{y_{j}}{x_{i}}\right)x_{1}^{\lambda_{1}}\dots x_{m}^{\lambda_{m}}K_{\emptyset,\mu}(\emptyset, y)x^{\rho_m}y^{\rho_n}\right\}
$$
But we have already proved that 
$$
K_{\emptyset,\mu}( \emptyset, y)=(-1)^{|\mu|+|\nu|}E_{\mu}(y)
$$
And Theorem is proved.
\end{proof}
\begin{corollary} Let  $(\lambda,\mu)\in P(m,n)$ and $\mu$ is a partition  then set of all $K_{\lambda,\mu}$
form a basis in the ring  $\Lambda^{+y}_{m,n}$.
\end{corollary}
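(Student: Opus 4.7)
The plan is to deduce this from the basis theorem for $\Lambda^{\pm}_{m,n}$ together with the product expression
\begin{equation*}
K_{\lambda,\mu}=(-1)^{|\tau|+|\nu|}\,C\,s_\lambda(x)\,s_\mu(y),\qquad C=\prod_{i=1}^{m}\prod_{j=1}^{n}\left(1-\frac{y_j}{x_i}\right),
\end{equation*}
recorded in formula \eqref{prod}. Linear independence of $\{K_{\lambda,\mu}:(\lambda,\mu)\in X(m,n),\ \mu\text{ a partition}\}$ is immediate, since this is a subset of a basis of $\Lambda^{\pm}_{m,n}$. What remains to check is: (i) each such $K_{\lambda,\mu}$ actually lies in the subring $\Lambda^{+y}_{m,n}$; and (ii) these elements span that subring.

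First I would verify (i). When $\mu$ is a partition, the decomposition $\mu=(\tau_1,\dots,\tau_r,0,\dots,0,-\nu_s,\dots,-\nu_1)$ forces $\nu=\emptyset$, so $s_\mu(y_1,\dots,y_n)$ is a genuine (polynomial) Schur polynomial. Since $C$ is itself polynomial in the $y_j$'s, the product formula above displays $K_{\lambda,\mu}$ as an element of $\Lambda^{\pm}_{m,n}\cap\mathbb{Z}[x_1^{\pm1},\dots,x_m^{\pm1},y_1,\dots,y_n]=\Lambda^{+y}_{m,n}$.

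For (ii) my approach is to divide by $C$. Take $f\in\Lambda^{+y}_{m,n}$ and expand uniquely as $f=\sum_{(\lambda,\mu)\in X(m,n)}c_{\lambda,\mu}K_{\lambda,\mu}$ inside $\Lambda^{\pm}_{m,n}$; substituting the product expression for $K_{\lambda,\mu}$ gives $f=C\cdot g$ where $g=\sum (-1)^{|\tau|+|\nu|}c_{\lambda,\mu}\,s_\lambda(x)\,s_\mu(y)\in\Lambda^{\pm}_m\otimes\Lambda^{\pm}_n$. The hard part will be to show that $g$ is polynomial in $y$. This should follow because, viewed as a polynomial in the $y_j$'s with coefficients in $\mathbb{Z}[x_1^{\pm1},\dots,x_m^{\pm1}]$, $C$ has constant term $C|_{y=0}=1$: any hypothetical lowest negative power $y_j^{-d}$ appearing in $g$ would have to appear unchanged in $f=Cg$, contradicting $f\in\Lambda^{+y}_{m,n}$. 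Once this is established, $g\in\Lambda^{\pm}_m\otimes\Lambda_n$, and since the products $s_\lambda(x)s_\mu(y)$ indexed by $\mu$ a partition are exactly those basis elements of $\Lambda^{\pm}_m\otimes\Lambda^{\pm}_n$ lying in $\Lambda^{\pm}_m\otimes\Lambda_n$, it follows that $c_{\lambda,\mu}=0$ whenever $\mu$ is not a partition, completing the spanning argument.
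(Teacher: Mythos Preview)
The proof has a genuine gap: the product formula you cite as \eqref{prod},
\[
K_{\lambda,\mu}=(-1)^{|\tau|+|\nu|}\,C\,s_\lambda(x)\,s_\mu(y),
\]
is established in the paper only for pairs $(\lambda,\mu)$ with $|\lambda|=m$ and $|\mu|=n$; it appears there precisely as the description of those $K_{\lambda,\mu}$ lying in the kernel of $\varphi_{m,n}$. It is false for shorter pairs. For instance when $m=n$ the pair $(\emptyset,\emptyset)$ belongs to $X(m,n)$ and $K_{\emptyset,\emptyset}=1$ (the empty determinant), which is not equal to $C$. More generally, any $K_{\lambda,\mu}$ with $|\lambda|<m$ survives under $\varphi_{m,n}$ and hence cannot be a multiple of $C$, since $\varphi_{m,n}(C)=0$.

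Both halves of your argument rely on this formula holding for every $(\lambda,\mu)\in X(m,n)$, so both collapse. In step (i) you cannot conclude $K_{\lambda,\mu}\in\Lambda^{+y}_{m,n}$ for non-maximal pairs from \eqref{prod}. In step (ii) the factorisation $f=C\cdot g$ is unjustified: the summands $c_{\lambda,\mu}K_{\lambda,\mu}$ with $|\lambda|<m$ are not divisible by $C$, so there is no $g$ to speak of and the ``lowest negative power'' argument never gets started.

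The paper's proof proceeds instead by induction on $mn$, via the homomorphism $\varphi_{m,n}:\Lambda^{+y}_{m,n}\to\Lambda^{+y}_{m-1,n-1}$. The product formula \eqref{prod} is invoked only at the inductive step and only in its valid range $|\lambda|=m$, $|\mu|=n$, to identify the kernel of $\varphi_{m,n}$; the non-maximal $K_{\lambda,\mu}$ are handled by the inductive hypothesis. If you want to salvage your approach you would need to supply, for each $(\lambda,\mu)\in X(m,n)$ with $\mu$ a partition, a direct reason why $K_{\lambda,\mu}\in\Lambda^{+y}_{m,n}$, and then a spanning argument that does not pass through division by $C$.
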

\begin{proof} It is clear that if $\mu$ is a partition then $K_{\lambda,\mu}\in \Lambda^{+y}_{m,n}$. So it is enough to prove that such elements linearly generated the ring $\Lambda^{+y}_{m,n}$.  Let us prove it induction on $mn$. If $m=0$, then we already proved that $K_{\emptyset,\mu}(y)=(-1)^{|\mu|}E_{\mu}(y)$ and therefore linearly generate $\Lambda_n$. If  $n=0$, then as before the statement follows from the Theorem  \ref{alt}. If  $mn>0$, then considering homomorphism   $\varphi_{m,n}$  and applying  inductive assumption we see that it is enough to prove that the kernel of the homomorphism $\varphi_{m,n}$  is a linear span of the elements  $K_{\lambda,\nu}$ such that  $\hat l(\lambda)=m,\hat l(\mu)=n$. But in this case $K_{\lambda,\nu}$  can be written in the form  (\ref{prod})  and therefore they linearly generate the kernel.
\end{proof}
\begin{corollary} Let  $(\lambda,\mu)\in P(m,n)$ and $\lambda,\mu$ are  partitions  then set of all $
K_{\lambda,\mu}\,
$
form a basis in the ring  $\Lambda_{m,n}$.
\end{corollary}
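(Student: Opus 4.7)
The plan is to restrict the basis from the previous corollary to the polynomial subring. Since $\Lambda_{m,n}\subseteq\Lambda^{+y}_{m,n}$ and the $K_{\lambda,\mu}$ with $(\lambda,\mu)\in X(m,n)$ and $\mu$ a partition already form a basis of the larger ring, the subfamily in which both $\lambda$ and $\mu$ are partitions is automatically linearly independent. I therefore need only verify two things: (a) each such $K_{\lambda,\mu}$ lies in $\Lambda_{m,n}$, and (b) this subfamily linearly spans $\Lambda_{m,n}$.

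For (a), I would inspect the determinantal definition of $K_{\lambda,\mu}$: Theorem \ref{conj3} allows the block of $h^{*}$-rows (which is the source of the apparent negative powers of $x_i$) to be rewritten in terms of $h$- and $e$-functions, yielding, when $\lambda$ is a partition, a determinant whose entries are all polynomials in $\Lambda_{m,n}$. Equivalently, formula (\ref{prod}) identifies $K_{\lambda,\mu}$ up to sign with a Kac-module character, which is visibly polynomial. For (b), I would mimic the inductive argument used in the preceding corollary and do induction on $mn$. The base cases $m=0$ and $n=0$ reduce to the classical facts that Schur polynomials form bases of $\Lambda_n$ and $\Lambda_m$ respectively, matching the identifications $K_{\emptyset,\mu}=(-1)^{|\mu|}s_\mu(y)$ (since $\nu=\emptyset$ when $\mu$ is a partition) and $K_{\lambda,\emptyset}=s_\lambda(x)$ (the latter via Theorem \ref{alt}). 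For $mn>0$, I would use the restriction of the homomorphism $\varphi_{m,n}$ to $\Lambda_{m,n}$; by the inductive hypothesis its image has the stated basis, so it suffices to describe the kernel.

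The main obstacle is identifying a linear basis of $\ker\varphi_{m,n}\cap\Lambda_{m,n}$ and matching it to the appropriate $K_{\lambda,\mu}$. The natural candidate basis for the kernel is the family $\prod_{i,j}(x_i-y_j)\,s_\lambda(x)\,s_\mu(y)$ indexed by partitions $\lambda,\mu$ of lengths $m$ and $n$ respectively, and formula (\ref{prod}), suitably interpreted in $\Lambda_{m,n}$ (the rational factors $\prod(1-y_j/x_i)$ recombining with $s_\lambda$ and the $h^{*}$-entries to yield $\prod(x_i-y_j)$ times a polynomial), matches these bijectively with the $K_{\lambda,\mu}$ up to sign. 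A triangular change of basis between the $h$-functions and the $e$-functions in the $y$-variables, analogous to the one used at the end of the proof of Theorem \ref{basis1}, then delivers the required spanning and closes the induction.
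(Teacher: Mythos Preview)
Your proposal is correct and follows essentially the same route as the paper, whose proof is literally the one line ``exactly in the same manner as the proof of the previous corollary'' --- i.e., linear independence from the ambient basis, membership, and spanning by induction on $mn$ via $\varphi_{m,n}$ with formula (\ref{prod}) handling the kernel. One small slip: when $\mu$ is a partition there is no $h^{*}$-block in $K_{\lambda,\mu}$ (that block comes from the negative part $\nu$ of $\mu$, which is empty); the potentially non-polynomial entries are in the $H$-rows, and it is formula (\ref{prod}) --- applied at each inductive step to the full-size pairs $|\lambda|=m,\ |\mu|=n$ --- that supplies both membership and the description of $\ker(\varphi_{m,n}|_{\Lambda_{m,n}})$, exactly as in your last paragraph.
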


\begin{proof} The proof can be given exactly in the same manner as the proof of the previous corollary.\end{proof}

\begin{remark} It is not difficult to show that there is a bijection between set of partitions  $(\lambda,\mu)\in P(m,n)$ and the set of partitions  $\nu$ such that  $\nu_{m+1}\le n$ and the definition of  $K_{\lambda,\mu}$ coincides with the Jacobi-Trudy formula for supersymmetric Schur functions. So in particular  our formulae are natural generalisations of Jacobi-Trudy formulae. 
\end{remark}

 Now let us show that our   $K_{\lambda,\mu}$ are particular case of Euler supercharacters formulae.

Let us define for  $(\lambda,\mu)\in P(m,n)$  two sets of natural numbers 
$$
D_+=[1,p]\times [1,n],\quad D_{-}=[p+1,m]\times [1,q]
$$
where $p=\hat l(\lambda),\,q=\hat l(\mu)$. Set also
$
 x^{\rho_m}=x_1^{m-1}\dots x_m^{0},\,\, y^{\rho_n}=y_1^{n-1}\dots y_n^{0}.
$
\begin{proposition} The following equality is valid
$$
K_{\lambda,\mu}\Delta(x)\Delta(y)=
$$
$$
=(-1)^{a}\left\{\prod_{(i,j)\in D_{+}}\left(1-\frac{y_j}{x_i}\right)\prod_{(i,j)\in D_{-}}\left(1-\frac{x_i}{y_j}\right)x_1^{\lambda_1}\dots x_p^{\lambda_p}y_1^{\mu_1}\dots y_q^{\mu_q}x^{\rho_m}y^{\rho_n}\right\}
$$
where $a=|\tau|+|\nu|$.
\end{proposition}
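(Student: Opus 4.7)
The plan is to derive the proposition by two successive ``peeling'' arguments, one for the $x$-variables and one for the $y$-variables, each modeled on the derivation of equation \eqref{ind21} used in the proof of the basis theorem.

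Stage 1 (peeling $x_1,\ldots,x_p$). The derivation of \eqref{ind21}---alternation of the first $H$-row of the determinant followed by column operations using Lemma~\ref{alt1}(3)---goes through verbatim for $K_{\lambda,\mu}$ with arbitrary $(\lambda,\mu)\in X(m,n)$, not only in the case $|\lambda|=m,|\mu|=n$ treated during the basis-theorem proof. Iterating \eqref{ind21} $p=|\lambda|$ times peels off $x_1,\ldots,x_p$ one at a time, producing
$$K_{\lambda,\mu}\Delta_m(x)\Delta_n(y)=\left\{\prod_{(i,j)\in D_+}\!\!\left(1-\frac{y_j}{x_i}\right)x_1^{\lambda_1}\cdots x_p^{\lambda_p}\,K_{\emptyset,\mu}(x^{(p)},y)\,x^{\rho_m}y^{\rho_n}\right\},$$
with $x^{(p)}=(x_{p+1},\ldots,x_m)$. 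This reduces the problem to the case $\lambda=\emptyset$.

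Stage 2 (peeling $y_1,\ldots,y_q$). It remains to process $K_{\emptyset,\mu}(x^{(p)},y)$, whose matrix consists of a top $h^*$-block (indexed by $\tau'$) and a bottom $h$-block (indexed by $\nu'$). From the generating-function definitions one derives the dual column-operation identities
$$y_1\,h^*_k(x,y)-h^*_{k-1}(x,y)=y_1\,h^*_k(x,y^{(1)}),\qquad h_k(x,y)-y_1\,h_{k-1}(x,y)=h_k(x,y^{(1)}),$$
where $y^{(1)}=(y_2,\ldots,y_n)$. Applying these as column operations on the $h^*$- and $h$-blocks respectively, combined with alternation of the appropriate extremal row over $y_1$ in the manner of \eqref{ind21} (with $x_1$ replaced by $y_1$), each step extracts a factor $y_1^{\mu_1}\prod_{i=p+1}^m(1-x_i/y_1)$ and reduces $K_{\emptyset,\mu}(x^{(p)},y)$ to $K_{\emptyset,\mu^{(1)}}(x^{(p)},y^{(1)})$. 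Iterating $q=|\mu|$ times produces all factors $\prod_{(i,j)\in D_-}(1-x_i/y_j)\cdot y_1^{\mu_1}\cdots y_q^{\mu_q}$ and reduces $K_{\emptyset,\mu}$ to $K_{\emptyset,\emptyset}=1$; combined with Stage 1 this yields the claimed formula.

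The main obstacle is the sign bookkeeping in Stage 2. The $h^*$-block contributes a factor of $(-1)^{|\tau|}$ (arising when one multiplies its rows by $y_j$ and then recognizes $y_j-x_i=-(x_i-y_j)$ in order to extract $(1-x_i/y_j)$), while the $h$-block contributes $(-1)^{|\nu|}$ (from the reordering of rows after each peeling step). A cleaner accounting uses Theorem~\ref{conj3} to rewrite $K_{\emptyset,\mu}$ in the dual form whose sign cost is exactly $(-1)^{|\tau|+|\nu|}$; after this transformation Stage 2 becomes formally parallel to Stage 1 with $y$'s playing the role of $x$'s, and the overall sign in the proposition matches $a=|\tau|+|\nu|$.
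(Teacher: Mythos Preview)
Your Stage~1 is fine and coincides with the paper's inductive step: iterating \eqref{ind21} peels off $x_1,\dots,x_p$ exactly as you describe.

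The genuine gap is in Stage~2. The two ``dual'' identities you write down are false. From the generating function
\[
\sum_{k\ge 0} h_k(x,y)\,t^k \;=\; \frac{\prod_j(1-y_jt)}{\prod_i(1-x_it)}
\]
one sees that the $y$'s sit in the \emph{numerator}, so multiplying by $(1-y_1t)$ does \emph{not} strip $y_1$; what is true instead is
\[
h_k(x,y)=h_k(x,y^{(1)})-y_1\,h_{k-1}(x,y^{(1)}),
\]
which goes in the wrong direction for a column operation (the right-hand side already lives in the reduced variable set). A quick sanity check with $m=0$, $n=1$ gives $h_1(y)-y_1h_0(y)=-2y_1\neq 0=h_1(\emptyset)$. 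The analogous claim for $h^*$ fails for the same reason. Hence there is no two-term $y$-recurrence on the $h$/$h^*$ rows of $K_{\emptyset,\mu}(x^{(p)},y)$, and your direct $y$-peeling cannot be carried out.

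Your closing remark about Theorem~\ref{conj3} is actually the key idea, not merely a bookkeeping device: applying it swaps the roles of $x$ and $y$, putting the $y$'s into the denominator position where the two-term recurrences \emph{do} hold. This is precisely what the paper does for its base case. But after the swap one still does not have $H$-rows; the paper handles this by checking that, in the minimal-$p$ situation, every index $i$ occurring in the $h_i(y,x)$ and $h^*_i(y,x)$ entries satisfies $i>m-n$, so that $h_i(y,x)=H_i(y,x)$ and $h^*_i(y,x)$ is a scalar multiple of $H_{m-n-i}(y,x)$. Only then can Lemma~\ref{alt1}(5) be invoked to produce the product $\prod_{(i,j)\in D_-}(1-x_i/y_j)$. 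Your sketch omits this index-range verification, and without it ``Stage~2 becomes formally parallel to Stage~1'' is not justified.
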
 
\begin{proof} Let us use induction on $p$.   We start with minimal value of $p$.   If  $n\ge m$, then minimal value of $p$ is $0$. If   $n<m$,  the minimal value of  $p$ is $m-n$. In the first case  $q=n-m$. By Lemma  (\ref{conj3}) 
$$
K_{\emptyset,\mu}(x,y)=(-1)^{|\nu|+|\tau|}K_{\mu,\emptyset}(y,x)
$$.
 The determinant on the right hand side contains  $h^*_i(y,x)$ and  $h_i(y,x)$.  The minimal index in the first case  is  $\nu'_s-s-r+1$, in the second case is  $\tau'_r-r-s+1$.  It follows from the definition that 
 $h_i(y,x)=H_i(y,x) $, и $ h^*_i(y,x)=(-1)^{n+1}\frac{x_1\dots x_m}{y_1\dots n}H_{m-n-i}(y,x)$, if $ i>m-n$. By our assumptions  $s+r\le q=n-m$,  therefore both of minimal indexes strictly grate then  $m-n$ so all the small letters   $h,h^*$  can be replaced by the  capital letters $H$.
  Therefore by Lemma  \ref{alt1}  the statement of the Theorem is true in this case.
 In the second case  $m>n, p=m-n,\,q=0$ and statement of the Theorem follows from the Lemma  \ref{alt1}.

 Let  $p>0$, then  $m>0$  so,  applying equality  (\ref{ind21}) and taking into account   that $\tau_1+\nu_1\le p+n-m=(p-1)+n-(m-1)$  we can reduce the proof to the case  $p-1$ and use inductive  assumption.

\end{proof}
\section{Example} Let us consider the most simple non-trivial example when $m=n=1$. In this case
 $$
 \Lambda^{\pm}_{1,1}=\{f\in \Bbb Z[x^{\pm1},y^{\pm1}\mid x\frac{\partial f}{\partial x}+y\frac{\partial f}{\partial y}\in (x-y)\}
$$
and
$$P(1,1)=\{(\lambda,\mu),\lambda,\mu\in\Bbb Z\}\cup\{\emptyset\}
$$
Let us write down the corresponding elements of the basis.

If $\mu=r>0$, then 
\begin{equation}\label{basis+}
K_{\lambda,\mu}=\left|\begin{array}{cccc}
H_{\lambda}&H_{\lambda+1}&\ldots&H_{\lambda+r}\\
    h_{0}&   h_1 & \ldots&  h_{r}\\
\vdots&\vdots&\ddots&\vdots\\
  h_{1-r}& \ldots &h_0&  h_{1}\\
  \end{array}\right|=-\left(1-\frac yx\right)x^{\lambda}y^{\mu}
\end{equation}

If $\mu=-s<0$, then 
\begin{equation}\label{basis-}
K_{\lambda,\mu}=\left|\begin{array}{cccc}
 h^*_{1}&   h^*_0 & \ldots&  h^*_{1-s}\\
\vdots&\vdots&\ddots&\vdots\\
  h^*_{s}& h^*_{s-1} &\ldots &  h^*_{0}\\
  H_{\lambda-s}&H_{\lambda+1-s}&\ldots&H_{\lambda}\\
  \end{array}\right|=-\left(1-\frac yx\right)x^{\lambda}y^{\mu}
\end{equation}
and if $\mu=0$, then 
$
K_{\lambda,0}=H_{\lambda}=\left(1-\frac yx\right)x^{\lambda}
$ and $K_{\emptyset}=1$. 

The ring $\Lambda^{\pm}_{1,1}$ is isomorphic to the ring $U^{\pm}_{1,1}$ generated by $u_i,v_i,t,\,\,i\in\Bbb Z$ such that $u_0=v_0=1,\,\, u_i=v_i=0,\,i<0$  subject to the  relations
\begin{equation}\label{relations11}
\left|\begin{array}{cc}
w_i&w_{i+1}\\
w_j&w_{j+1}\\
  \end{array}\right|=0,\quad w_i=u_i-tv_i,\quad i,j\in\Bbb Z
\end{equation}
and the corresponding homomorphism has the form
$$
\varphi: U^{\pm}_{1,1}\longrightarrow \Lambda_{1,1}^{\pm},\,\,\varphi(u_i)=h_i,\,\varphi(v_i)=h^{*}_i,\,\varphi(t)=\frac{y}{x}
$$

In the case $\Lambda_{1,1}^{+y}$ the basis forms $K_{\lambda,\mu}$ such that $\mu\ge 0$ and $K_{\emptyset}$. 

The ring $\Lambda^{+y}_{1,1}$ is isomorphic to the ring $U^{+}_{1,1}$ generated by $u_i,v_i, i\in\Bbb Z$ such that $u_0=1,\,\, u_i=v_i=0,\,i<0$  subject to the  relations
\begin{equation}\label{relations12}
\left|\begin{array}{cc}
w_i&w_{i+1}\\
w_j&w_{j+1}\\
  \end{array}\right|=0,\quad w_i=u_i-v_i,\quad i,j\in\Bbb Z
\end{equation}
and the corresponding homomorphism has the form
$$
\varphi: U^{\pm}_{1,1}\longrightarrow \Lambda_{1,1}^{+},\,\,\varphi(u_i)=h_i,\,\varphi(v_i)=\frac{y}{x}h^{*}_i,\,\varphi(t)=\frac{y}{x}
$$

In the case $\Lambda_{1,1}^{+y}$ the basis forms $K_{\lambda,\mu}$ such that $\lambda,\mu\ge 0$ and $K_{\emptyset}$. 

The ring $\Lambda_{1,1}$ is isomorphic to the ring $U_{1,1}$ generated by $u_i\in\Bbb Z$ such that $u_0=1,\,\, u_i=0,\,i<0$  subject to the  relations
\begin{equation}\label{relations12}
\left|\begin{array}{cc}
u_i&u_{i+1}\\
u_j&u_{j+1}\\
  \end{array}\right|=0,\quad i\in\Bbb Z_{>0}
\end{equation}
and the corresponding homomorphism has the form
$$
\varphi: U_{1,1}\longrightarrow \Lambda_{1,1},\,\,\varphi(u_i)=h_i.
$$

\section{Acnowledgements}
I am grateful to A.I. Molev and M.L. Nazarov for useful discussions.

 This work has been   funded by the Russian Academic Excellence Project '5-100' and by the Russian Ministry of Education and  Science (grant 1.492.2016/1.4).


\begin{thebibliography}{99}



\bibitem{Brun}
J. Brundan {\it Kazhdan-Lusztig polynomials and character formulae for the Lie superalgebra $\mathfrak {g l}(m|n)$.} J. Amer. Math. Soc. 16 (2003), no. 1, 185--231.
\bibitem{King}
C.J. Cummins, R.C. King,{\it Composite Young diagram, supercharacters of $U(M/N)$ and modification rules.} J. Phys. A : Math. Gen. 20 (1987) 3121--3133.
\bibitem{FH}
W. Fulton, J. Harris,  Representation Theory. A First Course. Grad. Texts in  Math., Reading in Mathematics, vol.129, Springer-Verlag, New York 1991. 

\bibitem{Gant}
F.R. Gantmacher {\it The Theory of Matrices.} vol.1, Chelsea Publishing Company, New York 68,1959, 374 pp.
\bibitem{GS}
C. Gruson, V. Serganova {\it Cohomology of generalized super grassmannians and character formula for basic classical Lie superalgebras.} In preparation.
\bibitem{HR} C.Hoyt, S. Reif {\it Duflo-Sergenova functor and Grothendieck  rings of Lie superalgebras,} arXiv: 1612.05815 (2016).

\bibitem{KT}
I. Kantor, I. Trishin {The algebra of polynomial invariants of the adjoint representation of the Lie superalgebra $gl(n,m)$.} Communication in algebra, 25(7), 1997, 2039--2070.

\bibitem{KV}
H.M.  Khudaverdian, Th.Th. Voronov {\it Berezinians, exterior powers and recurrent sequences.} Lett. Math. Phys. 74 (2005), no. 2, 201--228.

\bibitem{Ma}
I. Macdonald {\it Symmetric functions and Hall polynomials}. 2nd edition, Oxford Univ. Press, 1995.

\bibitem{Mo} 

E. Moens {\it  Supersymmetric Schur functions and Lie superalgebra representations.}  PhD Thesis, Universitet Gent, 2007. https:/ biblio.ugent.be/publicatin/469926
\bibitem{MV}
E.M. Moens and J. Van der Jeugt, 
{\it Composite supersymmetric S-functions and characters of gl(m|n) representations.}
Bulg. J. Phys. 33 (s2) (2006), 251-268. 
\bibitem{Serga}
V. Serganova {\it Characters of irreducible representations of simple Lie superalgebras.} Proc. Internat. Congr. Math., Vol. II (Berlin, 1998), Extra Vol. II, 1998, 583-593.
\bibitem{SV}
Sergeev, Alexander N.; Veselov, Alexander P.{\it Grothendieck rings of basic classical Lie superalgebras.} Ann. of Math. (2) 173 (2011), no. 2, 663–703.
\end{thebibliography}
\end{document}